\providecommand{\tabularnewline}{\\}
\numberwithin{equation}{section}
\numberwithin{figure}{section}
\theoremstyle{plain}
\newtheorem{thm}{\protect\theoremname}[section]
  \theoremstyle{plain}
  \newtheorem{conjecture}[thm]{\protect\conjecturename}
  \theoremstyle{definition}
  \newtheorem{defn}[thm]{\protect\definitionname}
  \theoremstyle{remark}
  \newtheorem{notation}[thm]{\protect\notationname}
  \theoremstyle{plain}
  \newtheorem{lem}[thm]{\protect\lemmaname}
  \theoremstyle{remark}
  \newtheorem{rem}[thm]{\protect\remarkname}
  \theoremstyle{plain}
  \newtheorem{cor}[thm]{\protect\corollaryname}
  \theoremstyle{definition}
  \newtheorem{condition}[thm]{\protect\conditionname}
  \theoremstyle{plain}
  \newtheorem{prop}[thm]{\protect\propositionname}
  \theoremstyle{remark}
  \newtheorem*{claim*}{\protect\claimname}
  \theoremstyle{definition}
  \newtheorem{example}[thm]{\protect\examplename}
\subjclass[2010]{11R21,11R45,11G50,14G05, 14E16}
  \providecommand{\claimname}{Claim}
  \providecommand{\conditionname}{Condition}
  \providecommand{\conjecturename}{Conjecture}
  \providecommand{\corollaryname}{Corollary}
  \providecommand{\definitionname}{Definition}
  \providecommand{\examplename}{Example}
  \providecommand{\lemmaname}{Lemma}
  \providecommand{\notationname}{Notation}
  \providecommand{\propositionname}{Proposition}
  \providecommand{\remarkname}{Remark}
\providecommand{\theoremname}{Theorem}
\begin{document}

\title{Manin's conjecture vs.\ Malle's conjecture}

\author{Takehiko Yasuda}
\begin{abstract}
By a heuristic argument, we relate two conjectures. One is a version
of Manin's conjecture about the distribution of rational points on
a Fano variety. We concern specific singular Fano varieties, namely
quotients of projective spaces by finite group actions, and their
singularities play a key role. The other conjecture is a generalization
of Malle's conjecture about the distribution of extensions of a number
field. Main tools are several Dirichlet series and previously obtained
techniques, especially the untwisting, for the counterpart over a
local field.
\end{abstract}

\address{Department of Mathematics, Graduate School of Science, Osaka University,
Toyonaka, Osaka 560-0043, Japan, tel:+81-6-6850-5326, fax:+81-6-6850-5327}

\curraddr{(Until Aug.\ 2015) Max Planck Institute for Mathematics, Vivatsgasse
7, 53111 Bonn, Germany}

\email{takehikoyasuda@math.sci.osaka-u.ac.jp, highernash@gmail.com}

\keywords{Manin's conjecture, Malle's conjecture, the McKay correspondence,
singular Fano varieties, heights}

\maketitle
\global\long\def\AA{\mathbb{A}}
\global\long\def\PP{\mathbb{P}}
\global\long\def\NN{\mathbb{N}}
\global\long\def\GG{\mathbb{G}}
\global\long\def\ZZ{\mathbb{Z}}
\global\long\def\QQ{\mathbb{Q}}
\global\long\def\CC{\mathbb{C}}
\global\long\def\FF{\mathbb{F}}
\global\long\def\LL{\mathbb{L}}
\global\long\def\RR{\mathbb{R}}
\global\long\def\MM{\mathbb{M}}
\global\long\def\SS{\mathbb{S}}

\global\long\def\bx{\boldsymbol{x}}
\global\long\def\by{\boldsymbol{y}}
\global\long\def\bf{\mathbf{f}}
\global\long\def\ba{\mathbf{a}}
\global\long\def\bs{\mathbf{s}}
\global\long\def\bt{\mathbf{t}}
\global\long\def\bw{\mathbf{w}}
\global\long\def\bb{\mathbf{b}}
\global\long\def\bv{\mathbf{v}}
\global\long\def\bp{\mathbf{p}}
\global\long\def\bq{\mathbf{q}}
\global\long\def\bm{\mathbf{m}}
\global\long\def\bj{\mathbf{j}}
\global\long\def\bM{\mathbf{M}}
\global\long\def\bd{\mathbf{d}}

\global\long\def\cN{\mathcal{N}}
\global\long\def\cW{\mathcal{W}}
\global\long\def\cY{\mathcal{Y}}
\global\long\def\cM{\mathcal{M}}
\global\long\def\cF{\mathcal{F}}
\global\long\def\cX{\mathcal{X}}
\global\long\def\cE{\mathcal{E}}
\global\long\def\cJ{\mathcal{J}}
\global\long\def\cO{\mathcal{O}}
\global\long\def\cD{\mathcal{D}}
\global\long\def\cZ{\mathcal{Z}}
\global\long\def\cR{\mathcal{R}}
\global\long\def\cC{\mathcal{C}}
\global\long\def\cL{\mathcal{L}}
\global\long\def\cV{\mathcal{V}}

\global\long\def\fs{\mathfrak{s}}
\global\long\def\fp{\mathfrak{p}}
\global\long\def\fm{\mathfrak{m}}
\global\long\def\fX{\mathfrak{X}}
\global\long\def\fV{\mathfrak{V}}
\global\long\def\fx{\mathfrak{x}}
\global\long\def\fv{\mathfrak{v}}
\global\long\def\fY{\mathfrak{Y}}

\global\long\def\rv{\mathbf{\mathrm{v}}}
\global\long\def\rx{\mathrm{x}}
\global\long\def\rw{\mathrm{w}}
\global\long\def\ry{\mathrm{y}}
\global\long\def\rz{\mathrm{z}}
\global\long\def\bv{\mathbf{v}}
\global\long\def\bw{\mathbf{w}}
\global\long\def\sv{\mathsf{v}}
\global\long\def\sx{\mathsf{x}}
\global\long\def\sw{\mathsf{w}}

\global\long\def\Spec{\mathrm{Spec}\,}
\global\long\def\Hom{\mathrm{Hom}}

\global\long\def\Var{\mathrm{Var}}
\global\long\def\Gal{\mathrm{Gal}}
\global\long\def\Jac{\mathrm{Jac}}
\global\long\def\Ker{\mathrm{Ker}}
\global\long\def\Image{\mathrm{Im}}
\global\long\def\Aut{\mathrm{Aut}}
\global\long\def\st{\mathrm{st}}
\global\long\def\diag{\mathrm{diag}}
\global\long\def\characteristic{\mathrm{char}}
\global\long\def\tors{\mathrm{tors}}
\global\long\def\sing{\mathrm{sing}}
\global\long\def\red{\mathrm{red}}
\global\long\def\ord{\mathrm{ord}}
\global\long\def\pt{\mathrm{pt}}
\global\long\def\op{\mathrm{op}}
\global\long\def\Val{\mathrm{Val}}
\global\long\def\Res{\mathrm{Res}}
\global\long\def\Pic{\mathrm{Pic}}
\global\long\def\disc{\mathrm{disc}}
\global\long\def\height{\mathrm{ht}}
 \global\long\def\length{\mathrm{length}}
\global\long\def\sm{\mathrm{sm}}
\global\long\def\rank{\mathrm{rank}}
\global\long\def\age{\mathrm{age}}
\global\long\def\et{\mathrm{et}}
\global\long\def\hom{\mathrm{hom}}
\global\long\def\tor{\mathrm{tor}}
\global\long\def\reg{\mathrm{reg}}
\global\long\def\cont{\mathrm{cont}}
\global\long\def\crep{\mathrm{crep}}
\global\long\def\Stab{\mathrm{Stab}}
\global\long\def\discrep{\mathrm{discrep}}
\global\long\def\mld{\mathrm{mld}}

\global\long\def\GL{\mathrm{GL}}
\global\long\def\codim{\mathrm{codim}}
\global\long\def\Val{\mathrm{Val}}
\global\long\def\ur{\mathrm{ur}}
\global\long\def\Eff{\mathrm{PEf}}
\global\long\def\Tor{\textrm{-}\mathrm{Rin}}
\global\long\def\Fie{\textrm{-}\mathrm{Fie}}
\global\long\def\prim{\mathrm{prim}}
\global\long\def\cHom{\mathcal{H}om}
\global\long\def\cSpec{\mathcal{S}pec}
\global\long\def\Proj{\mathrm{Proj}\,}
\global\long\def\modified{\mathrm{modif}}
\global\long\def\ind{\mathrm{ind}}
\global\long\def\Conj{\mathrm{Conj}}
\global\long\def\KConj{K\textrm{-}\mathrm{Conj}}
\global\long\def\fie{\textrm{-}\mathrm{fie}}
\global\long\def\NS{\mathrm{NS}}
\global\long\def\Disc{\mathrm{Disc}}
\global\long\def\Peyre{\mathrm{Peyre}}

\tableofcontents{}

\section{Introduction\label{sec:Introduction}}

The aim of this paper is to relate the distribution of rational points
on a Fano variety to the distribution of extensions of a number field.
Central in the two problems are respectively Manin's conjecture \cite{Franke:1989go}
and Malle's conjecture \cite{MR1884706,MR2068887}. We consider variants
of these conjectures, and see by a heuristic argument that they explain
each other. 

Let $K$ be a number field and $X$ a Fano variety over $K$ having
at worst log terminal singularities. Giving an adelic metric to the
anti-canonical divisor $-K_{X}$ defines a height function $H:X(K)\to\RR_{>0}$.
For a subset $U\subset X(K)$ and for a real number $B>0$, we let
\[
\cN_{U}(B):=\sharp\{x\in U\mid H(x)\le B\}.
\]
We are interested in the asymptotic behavior of this number as $B$
tends to infinity. As a subset $U$, we consider a \emph{cothin subset,
}the complement of a thin subset. In turn, a \emph{thin subset }of
$X(K)$ is defined as a subset contained in the image of $Y(K)$ for
a generically finite morphism $Y\to X$ admitting no rational section. 

We consider the following version of Manin's conjecture:
\begin{conjecture}[Conjecture \ref{conj:BT}]
\label{conj:intro-manin}Suppose that $X$ is a log terminal Fano
variety. 
\begin{enumerate}
\item If $X$ is canonical (has only canonical singularities) and $K$ is
sufficiently large, then for a sufficiently small cothin subset $U\subset X(K)$,
we have 
\[
\cN_{U}(B)\sim CB(\log B)^{\rho(X)+\gamma(X)-1},
\]
where $C$ is a positive constant, $\rho(X)$ is the Picard number
of $X$ and $\gamma(X)$ is the number of crepant divisors over $X$. 
\item If $X$ is not canonical and if $K$ is sufficiently large, then for
any cothin subset $U\subset X(K)$, we have
\[
\cN_{U}(B)\sim CB^{\alpha}(\log B)^{\beta}
\]
with $\alpha>1$ and $\beta\ge0$. 
\end{enumerate}
\end{conjecture}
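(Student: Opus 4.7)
Since this is a conjecture meant to guide the paper rather than an established theorem, I outline a heuristic strategy by which one would motivate it, organized along the adelic framework alluded to in the abstract. In both cases the first move is to pass from $X$ to a resolution $\pi\colon\widetilde X\to X$ and reduce to counting on the smooth model, combined with an Euler product decomposition of the height zeta function
\[
Z_U(s)\;=\;\sum_{x\in U} H(x)^{-s} \;=\; \prod_v Z_{U,v}(s)
\]
whose rightmost pole is to be analyzed by a Tauberian theorem; the restriction to a cothin $U$ is designed precisely to absorb the ``accumulating'' thin subvarieties on which rational points might otherwise concentrate.

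For the canonical case the plan is to choose $\pi$ to be crepant, which exists at least after enlarging $K$ when $X$ has only canonical singularities of the kind considered (quotients of $\PP^n$). Since $-K_{\widetilde X}=\pi^{*}(-K_X)$, the anti-canonical height is preserved away from the exceptional locus, while points on $X(K)$ that fail to lift to $\widetilde X(K)$ are expected to lie on proper subvarieties that may be incorporated into the cothin condition. One then applies Peyre's refinement of Manin's conjecture on $\widetilde X$: because $\rho(\widetilde X)=\rho(X)+\gamma(X)$, this delivers exactly the predicted $CB(\log B)^{\rho(X)+\gamma(X)-1}$, with $C$ the Peyre constant computed on $\widetilde X$.

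For the non-canonical case no crepant model exists: writing $K_{\widetilde X}=\pi^{*}K_X+\sum_i a_i E_i$, some $a_i$ lie in $(-1,0)$. Tubular neighbourhoods of such $E_i$ produce more $K$-rational points of bounded $H$-height than a smooth divisor would, so the exponent of $B$ should be pushed strictly above $1$; quantitatively one expects $\alpha$ to be governed by $\min_i (1+a_i)^{-1}$, equivalently by the minimal age of a non-identity element appearing in the quotient description, and $\beta$ to be determined by the multiplicity of divisors realizing this minimum together with the relevant Picard rank. The local realization of this picture is by the untwisting technique of the author's earlier work: each factor $Z_{U,v}(s)$ becomes a $p$-adic integral on $X(K_v)$ that is rewritten as a weighted sum over local $G$-extensions of $K_v$, with weights built from discriminants and ages.

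The hard part, and the reason the statement must remain conjectural, is to carry this heuristic through to a rigorous asymptotic: establishing absolute convergence of the Euler product to the right of the predicted abscissa, identifying the order of the pole at the boundary, controlling the archimedean factor so that a Peyre-type constant $C$ appears, and, most delicately, ruling out that some thin subvariety of $X$ carries so many rational points of bounded height as to outstrip the main term. The last step is where the formulation of the cothin subset $U$ interacts nontrivially with the geometry of log-terminal singularities, and is the true obstacle blocking a full proof.
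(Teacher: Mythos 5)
Your outline diverges from the paper's actual derivation, and two of its steps would fail. The paper does not pass to a crepant resolution; it deduces the conjecture from the Batyrev--Manin conjecture for \emph{big} divisors (Conjecture \ref{conj:BM general D}) applied to $D=-f^{*}K_{X}$ on an \emph{arbitrary} log resolution $f:Y\to X$, via a lemma computing the two invariants: $\alpha(-f^{*}K_{X})=1$ exactly when $X$ is canonical (and $>1$ otherwise, because $[K_{Y}]-[f^{*}K_{X}]=\sum a_{i}[E_{i}]$ fails to be pseudo-effective once some $a_{i}<0$), and, in the canonical case, the minimal face of $\Eff(Y)$ containing $\partial(-f^{*}K_{X})$ is the cone spanned by the $[E_{i}]$ with $a_{i}>0$, of codimension $\rho(X)+\gamma(X)$. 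Your step ``choose $\pi$ crepant'' is a genuine gap: crepant resolutions of canonical (even quotient) singularities do not exist in general in dimension $\ge 4$, and even when one exists, $\pi^{*}(-K_{X})$ is only big and nef, so Peyre's theorem for ample anticanonical heights does not apply directly; one is forced into the big-divisor framework anyway, where the $\log B$ exponent is the codimension of a face of $\Eff(Y)$ minus one, not $\rho(\widetilde X)-1$ by fiat. The paper's lemma shows these happen to agree, but that is the content to be proved, not an input.

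The second, more serious, discrepancy is in the non-canonical case. You predict $\alpha$ is governed by $\min_{i}(1+a_{i})^{-1}=1/\mld(X)=1/\age(G)$. The paper explicitly warns against this: Lemma \ref{lem:toric-alpha} only gives $\alpha(-f^{*}K_{X})\le 1/\mld(X)$, and Corollary \ref{cor:toric-strict-ineq} shows that for toric Fano quotients of Picard number one the true exponent satisfies the \emph{strict} inequality $1<a<1/\mld(X)$; Remark \ref{rem:fields-than-rings} aside, the remark following Lemma \ref{lem:toric-alpha} states that $1/\mld(X)$ and $\delta(X)$ are ``generally wrong'' from the Batyrev--Manin viewpoint. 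This is precisely why the second clause of the conjecture asserts only $\alpha>1$ and $\beta\ge 0$ with no closed form, and why the paper's heuristic linking Manin to Malle is acknowledged to break down when $\age(G)<1$. Your ``tubular neighbourhood'' heuristic, and likewise the claimed Euler-product factorization $Z_{U}(s)=\prod_{v}Z_{U,v}(s)$ of a height zeta function over a set of rational points (which is not a factorization that exists; one needs Poisson summation on special varieties), would therefore lead you to a quantitatively incorrect prediction rather than to the statement as formulated.
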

The use of cothin subsets was first suggested by Peyre \cite{MR2019019}
and more seriously discussed by Le Rudulier \cite{LeRudulierSurface}
and Browning--Loughran \cite{Browning:2013aa}. This modification
of considered subsets is appropriate for quotient varieties which
we will consider. It is also explained in relation to the distribution
of number field extensions (Remark \ref{rem:fields-than-rings}).
The formula in the first assertion is a special case of a formula
considered by Batyrev--Tschinkel \cite{MR1679843} and also deduced
from a conjecture by Batyrev--Manin \cite{MR1032922}. 

We will relate this version of Manin's conjecture with the distribution
of Galois extensions of a number field. Let $K$ still denote a number
field and let $G$ be a finite group. We mean by a $G$-\emph{field
(over $K$) }a finite Galois\emph{ }extension $L/K$ endowed with
an isomorphism $\Gal(L/K)\cong G$. We denote the $G$-equivariant
isomorphism classes of $G$-fields by $G\Fie(K)$. Let $V$ be a finite-dimensional
faithful $G$-represenatation $V$ over $K$ and $S$ a finite set
of places of $K$ containing all infinite places such that $V$ is
defined over the $S$-integer ring $\cO_{S}$. We will define the
notion of $V$\emph{-discrimiant} of $G$-fields and denote them by
$D_{L}^{V}$. This invariant is the global version of counting functions
for extensions of a local field which appeared in the study of the
wild McKay correspondence \cite{Wood-Yasuda-I,MR3230848,wild-p-adic,Yasuda:2013fk,Yasuda:2014fk2}.
Dummit \cite{Dummit:2014vb} introduced a probably closely related
invariant of $\rho$-\emph{discriminants }and studied the distribution
of $G$-fields with respect to it.\emph{ }Although not knowing the
precise relation between $V$- and $\rho$-discriminants, the author
was influenced by Dummit's work. For a real number $B>0$, let $N_{G,V,K}(B)$
be the number of isomorphism classes of $G$-fields $L$ over $K$
with $D_{L}^{V}\le B$. To formulate our conjecture on the distribution
of $G$-fields, we recall that there exists the \emph{age }function
$\age:G\to\QQ_{\ge0}$ associated to the given $G$-represenatation
$V$, which often appears in the context of the McKay correspondence
(see \cite{MR1463181}). Let $\age(G)$ be the minimum of $\age(g)$,
$g\in G\setminus\{1\}$ and $\upsilon(G)$ the number of non-trivial
$K$-conjugacy classes (conjugacy classes of $G$ modulo a $\Gal(\overline{K}/K)$-action)
with minimum age. We raise the following conjecture:
\begin{conjecture}[Conjecture \ref{conj:general Malle}]
\label{conj:intro-malle}If $K$ is sufficiently large, then
\[
N_{G,V,K}(B)\sim CB^{1/\age(G)}(\log B)^{\upsilon(G)}.
\]

\end{conjecture}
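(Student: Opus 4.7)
The plan is to analyze the Dirichlet series
\[
\cD(s):=\sum_{L\in G\Fie(K)}(D_{L}^{V})^{-s},
\]
locate its rightmost singularity, and extract the asymptotic by a Tauberian theorem. Since the $V$-discriminant of a $G$-field $L$ factors over places of $K$ and depends only on the local data of the corresponding homomorphism $\rho:\Gal(\overline{K}/K)\to G$, I would first enlarge $G\Fie(K)$ to the space of $G$-étale algebras, parametrized up to conjugation by arbitrary $\rho$. The associated series then factors as a formal Euler product
\[
\cD^{\et}(s)=\prod_{v}Z_{v}(s),\qquad Z_{v}(s)=\sum_{\rho_{v}}q_{v}^{-s\,v(D_{\rho_{v}}^{V})},
\]
where $\rho_{v}:\Gal(\overline{K_{v}}/K_{v})\to G$ ranges over local homomorphisms carrying the standard groupoid weight.

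The key input is the local computation of $Z_{v}(s)$, which is supplied by the untwisting technique and the wild McKay correspondence of \cite{Wood-Yasuda-I,MR3230848,wild-p-adic,Yasuda:2013fk,Yasuda:2014fk2}. After untwisting, each $\rho_{v}$ contributes a weight of the form $q_{v}^{-s\,\age(g)}$ determined by the $K_{v}$-conjugacy class $[g]\in\KConj(G)$ of the image of an inertia generator, so that for almost every $v$ one has an approximate shape
\[
Z_{v}(s)=1+\sum_{[g]\in\KConj(G)\setminus\{1\}}q_{v}^{-s\,\age(g)}+(\text{higher-order terms}).
\]
Comparison with the Dedekind zeta factors $(1-q_{v}^{-s\,\age(g)})^{-1}$ then yields a factorization
\[
\cD^{\et}(s)=h(s)\prod_{[g]\in\KConj(G)\setminus\{1\}}\zeta_{K}\bigl(s\,\age(g)\bigr),
\]
with $h(s)$ holomorphic and non-vanishing near $s=1/\age(G)$. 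The rightmost pole therefore sits at $s=1/\age(G)$, contributed by the $\upsilon(G)$ $K$-conjugacy classes of minimum age together with a mass-normalization factor from the untwisting measure, of precisely the order required by the Delange--Ikehara theorem to produce a $(\log B)^{\upsilon(G)}$ power.

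To pass from $G$-étale algebras back to $G$-fields I would invert on the subgroup lattice: writing the étale-algebra count as $\sum_{H\le G}N_{H,V,K}(B)$ and recovering $N_{G,V,K}(B)$ by Möbius inversion, one must check that for each proper $H\subsetneq G$ the subgroup Dirichlet series has rightmost pole no further right than $s=1/\age(G)$ (which follows from the trivial inequality $\age(H)\ge\age(G)$) and that the Möbius alternating sum does not annihilate the leading singularity. A Delange--Ikehara Tauberian theorem then delivers the asserted asymptotic $N_{G,V,K}(B)\sim CB^{1/\age(G)}(\log B)^{\upsilon(G)}$. The main obstacle is the uniform control of $Z_{v}(s)$ at the finitely many wildly ramified places: the untwisting reduces this to a motivic integral on a jet scheme, but proving absolute convergence of the Euler product to the right of $s=1/\age(G)$ and securing a positive leading constant $C$ requires the hypothesis that $K$ be sufficiently large, which forces $K_{v}$-conjugacy to agree with $K$-conjugacy at the relevant $v$ and stabilizes the local mass formulas to the age-driven shape above.
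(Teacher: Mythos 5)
The statement you are being asked to ``prove'' is Conjecture~\ref{conj:intro-malle} (= Conjecture~\ref{conj:general Malle} in the body), the paper's proposed generalization of Malle's conjecture. The paper does not prove it and does not claim to: it is put forward as a conjecture, supported only by heuristics, by its reduction to classical Malle's conjecture for permutation representations (Propositions~\ref{prop:compare-discs} and \ref{prop:ind=00003Dage}), and by special cases such as certain abelian $G$ via Wood's theorem. So your proposal cannot be ``the same approach as the paper's proof''; it is a write-up of the standard local--global heuristic, and its central step is precisely the open problem.

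Concretely, the gap is the passage from the formal Euler product $\prod_{v}Z_{v}(s)$ to the actual Dirichlet series over global $G$-fields. For non-abelian $G$ the local restrictions of a surjection $\Gal(\overline{K}/K)\twoheadrightarrow G$ are not independent, so the global series does not factor as an Euler product; the assertion that the product of local masses governs the global count is the content of Malle's conjecture (and of Bhargava's heuristics), not a lemma you can invoke. Even exhibiting one $G$-field over $K$ --- needed for $C>0$ --- is the inverse Galois problem. Your remaining steps also lean on unproved inputs: the Tauberian argument needs meromorphic continuation of $Z^{\disc}(s)$ past $s=1/\age(G)$, which the paper only \emph{assumes} (Condition~\ref{cond:merom-continuation}); the M\"obius inversion over subgroups must exclude cancellation or domination by secondary terms, which is exactly the mechanism behind Kl\"uners-type counterexamples that the paper avoids only by taking $K$ sufficiently large; and a pole of order $\upsilon(G)$ would give $(\log B)^{\upsilon(G)-1}$ by the Tauberian theorem as stated in Section~\ref{sec:Height-zeta-functions}, matching the body's formulation rather than the exponent $\upsilon(G)$ you (and the introduction) state. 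What the paper actually does with this conjecture is relate it heuristically to Conjecture~\ref{conj:BT} for $\overline{X}=\PP_{K}^{d}/G$ via the untwisting identity (\ref{eq:ht-zetas}) and the approximation $\cZ(s)=Z^{\disc}(s)Z_{V(K)}(s)$; neither conjecture is proved there.
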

Except that $K$ is sufficiently large, this contains Malle's conjecture
\cite{MR2068887} as a special case: if $G$ is a transitive subgroup
of the symmetric group $S_{n}$ and if $V=K^{2n}$ is the direct sum
of two copies of the natural permutation representation, then the
above conjecture is equivalent to Malle's conjecture. For a technical
reason, we also introduce a variant of $V$-discriminant, the \emph{extended
$V$-discriminant, }denoted $\tilde{D}_{L}^{V}$. Since for some constants
$C_{1},C_{2}>0$, $C_{1}D_{L}^{V}\le\tilde{D}_{L}^{V}\le C_{2}D_{L}^{V}$,
we expect that the similarly defined number $\tilde{N}_{G,V,K}(B)$
for $\tilde{D}_{L}^{V}$ would satisfy an asymptotic formula of the
same form. 

To relate Conjectures \ref{conj:intro-manin} and \ref{conj:intro-malle}
to each other, we identify the given $G$-representation $V$ with
the affine space $\AA_{K}^{d}$ and consider the quotient variety
$X=\AA_{K}^{d}/G$ and its compactification $\overline{X}=\PP_{K}^{d}/G$.
If the map $\PP_{K}^{d}\to\overline{X}$ is étale in codimension one,
then $\overline{X}$ is a log terminal Fano variety whose minimal
log discrepancy is equal to $\age(G)$. We define \emph{primitive
$K$-points }of $X$ as those $K$-points of $X$ not coming from
$K$-points of intermediate covers of $\AA_{K}^{d}\to X$. We propose
the set $X_{\prim}(K)$ of primitive $K$-points as the sufficiently
small cothin subset $U\subset\overline{X}(K)$ in Conjecture \ref{conj:intro-manin}.
We fix an adelic metric on $-K_{\overline{X}}$, which determines
height functions on $X$ and $V$. We consider the following Dirichlet
series: the \emph{height zeta functions}
\begin{gather*}
Z_{X_{\prim}(K)}(s)=\sum_{x\in X_{\prim}(K)}H(x)^{-s},\\
Z_{V(K)}(s)=\sum_{y\in V(K)}H(y)^{-s},
\end{gather*}
and the \emph{extended $V$-Discriminant zeta function},
\[
Z^{\disc}(s)=\sum_{L\in G\Fie(K)}\left(\tilde{D}_{L}^{V}\right)^{-s}.
\]
From a heuristic argument, we expect that the product $Z^{\disc}(s)Z_{V(K)}(s)$
would be a good approximation of $Z_{X_{\prim}(K)}(s)$, and they
would have the right-most poles of the same place and the same order,
assuming suitable meromorphic continuation of these zeta functions
beyond the abscissae of convergence. Under the heuristic assumption
we will make, Conjectures \ref{conj:intro-manin} implies a weak version
of \ref{conj:intro-malle} and vice versa. Moreover, when $\age(G)=1$,
then the two conjectures become equivalent. If we look even at the
residues of these Dirichlet series at the right-most poles, then we
might be able to relate Peyre's refinement \cite{MR1340296} of Manin's
conjecture in terms of Tamagawa measures (and Batyrev--Tschinkel's
generalization \cite{MR1679843} of it) with Bhargava's probabilistic
heuristics \cite{MR2354798} about the distribution of number fields.
However this subject is not pursued in this paper. 

The mentioned heuristic argument relies on the \emph{untwisting technique}.
It was first used by Denef--Loeser \cite{MR1905024} in their proof
of a version of the McKay correspondence with an explicit construction.
Later it was generalized and made more intrinsic by the author \cite{Yasuda:2013fk,Yasuda:2014fk2},
which can be easily translated into the case of global fields. For
each $L\in G\Fie(K)$, we construct the \emph{untwisting variety}
$V^{|L|}$, which is again isomorphic to $\AA_{K}^{d}$, but there
exists no canonical isomorphism between $V$ and $V^{|L|}$. There
exists a one-to-one correspondence of $G$-equivariant $L$-points
of $V$ and (non-equivariant) $K$-points of $V^{|L|}$. It is then
straightforward to see that we have a bijection 
\[
\bigsqcup_{L\in G\Fie(K)}V_{\ur}^{|L|}(K)/\Aut(L)\to X_{\prim}(K).
\]
Here $V_{\ur}^{|L|}$ is the preimage of $X_{\ur}$ in $V^{|L|}$
with $X_{\ur}\subset X$ the unramified locus of $V\to X$. Therefore,
if we define heights on $V^{|L|}$ as the pull-backs of the one on
$X$, then we trivially have the equality among height zeta functions,
\[
Z_{X_{\prim}(K)}(s)=\frac{1}{\sharp Z(G)}\sum_{L\in G\Fie(K)}Z_{V_{\ur}^{|L|}(K)}(s),
\]
noting $\sharp Z(G)=\sharp\Aut(L)$. Using Peyre's refinement \cite{MR1340296}
of Manin's conjecture and comparing Tamagawa measures on $V$ and
$V^{|L|}$, we heuristically expect that $Z_{V_{\ur}^{|L|}(K)}(s)$
would be approximated by $\left(\tilde{D}_{L}^{V}\right)^{-s}Z_{V(K)}(s)$,
and hence that $Z_{X_{\prim}(K)}(s)$ would be approximated by $Z^{\disc}(s)Z_{V(K)}(s)$. 

To end this introduction, we here mention related works. In the paper
\cite[page 153]{Ellenberg:2005bn} of Ellenberg and Venkatesh, it
was mentioned, as a comment by Tschinkel, a similarity between their
work on Malle's conjecture and Batyrev's one on rational points on
Fano varieties. In another paper of theirs \cite[page 732]{Ellenberg:2006js},
the relation between Malle's conjecture and Manin's conjecture was
more explicitly noted. In the same paper, they use the field of multi-symmetric
functions, which is the function field of the above quotient variety
$X$. The approach using $X$ or its function field is regarded as
a revisitation of Noether's approach to the inverse Galois problem
(see \cite{MR2363329}), with more emphasis on the quantitative aspect.
As more recent and more similar works, we mention Le Rudulier's papers
\cite{LeRudulierLine,LeRudulierSurface} and the author's unpublished
manuscript \cite{densities}. The former considers $S_{n}$-permutation
actions on products of projective lines or planes, and the latter
considers permutation actions of more general finite groups on products
of projective spaces. Our work can be also considered as the global
version of the wild McKay correspondence\footnote{The previously obtained \emph{tame} McKay correspondence \cite{MR1677693,MR1905024}
is of course relevant too. Indeed we mainly concern the tame situation.
However, in the tame and \emph{local} case, being somehow trivial,
the viewpoint of counting extensions was seemingly missing or only
implicit. It should be also related, the McKay correspondence in the
context of curve counting theories like Gromov--Witten and Donaldson--Thomas,
on which the author lacks expertise. } (see \cite{Wood-Yasuda-I,MR3230848,wild-p-adic,Yasuda:2013fk,Yasuda:2014fk2}).
Mäki \cite{Maki:1993et} studied the distribution of $G$-fields for
abelian $G$ with respect to conductors rather than classically used
discriminants and Wood \cite{Wood:2010gs} considered more general
counting functions, in particular, Artin conductors (see Section 5
of her paper). In the case where $V$ is a \emph{balanced} $G$-representation,
$V$-discrimiants and Artin conductors basically coincide, which was
proved in \cite{Wood-Yasuda-I} in the case of local fields. 

The outline of the paper is as follows. In Section \ref{sec:Notation-and-convention}
we set up frequently used notation and convention. After reviewing
basic materials on adelic metrics and heights in Section \ref{sec:Metrics-and-heights}
and singularities in Section \ref{sec:Singularities}, we recall several
versions of Manin's conjecture in Section \ref{sec:Rational-points-on}.
In Section \ref{sec:Height-zeta-functions}, we briefly recall height
zeta functions and a version of Tauberian theorem. In Section \ref{sec:Peyre's-constants-for}
we recall Peyre's refinement of Manin's conjecture in the case of
projective spaces. Sections \ref{sec:Quotients-of-projective} to
\ref{sec:Modified-heights} are devoted to the study of quotients
of projective spaces by finite group actions in the context of Manin's
conjecture. In Section \ref{sec:Distribution-of-number} we recall
Malle's conjecture and propose a generalization of it. In the final
Section we relate Manin's conjecture and Malle's conjecture as a consequence
of materials prepared in earlier sections.

\subsection*{Acknowledgments}

The author would like to thank the following people for helpful discussion
and comments; Jordan Ellenberg, Daniel Loughran, Shinnosuke Okawa,
Ramin Takloo-Bighash, Takashi Taniguchi, Melanie Wood, Takuya Yamauchi,
Seidai Yasuda and Akihiko Yukie. 

A part of this work was done during the author's stay at the Max Planck
Institute for Mathematics. He is grateful for its hospitality.

\section{Notation and convention\label{sec:Notation-and-convention}}

Throughout the paper, we work over a base number field denoted by
$K$. We write the set of places of $K$ as $M_{K}$ and the subsets
of finite and infinite places as $M_{K,f}$ and $M_{K,\infty}$ respectively.
For $\fp\in M_{K}$, $K_{\fp}$ denotes the corresponding completion
of $K$. The normalized absolute value on $K_{\fp}$ is denoted by
$\left\Vert \cdot\right\Vert _{\fp}$: if $p$ is the place of $\QQ$
with $\fp\mid p$ and $\left|\cdot\right|_{p}$ denotes the $p$-adic
absolute value, then $\left\Vert \cdot\right\Vert _{\fp}=\left|N_{K_{\fp}/\QQ_{p}}(\cdot)\right|_{p}$.
When $\fp$ is finite, we write its integer ring by $\cO_{\fp}$,
the residue field by $\kappa_{\fp}$ and the cardinality of $\kappa_{\fp}$
by $N_{\fp}$. We denote by $S$ a finite subset of $M_{K}$ containing
$M_{K,\infty}$, and by $S^{c}$ its complement $M_{K}\setminus S$.
We denote the integer ring of $K$ by $\cO_{K}$ and the $S$-integer
ring by $\cO_{S}$. 

We sometimes denote an arbitrary field of characteristic zero by $F$.
We denote by $\overline{F}$ an algebraic closure of $F$. A \emph{variety
}over $F$ means a separated integral scheme of finite type over $F$.
A \emph{$\QQ$-divisor on }a variety or a scheme means a $\QQ$-Cartier
$\QQ$-Weil divisor. A \emph{divisor over }a variety $X$ means an
equivalence classes of prime divisors on proper birational modifications
of $X$, or equivalently a divisorial valuation on the function field
of $X$ having its center on $X$. \emph{Points }of schemes usually
mean morphisms: for instance, for an $F$-variety $X$ and an $F$-algebra
$L$, an $L$-point of $X$ means an $F$-morphism $\Spec L\to X$.
The set of $L$-points of $X$ is then denoted by $X(L)$. We denote
by $X_{L}$ the base change $X\otimes_{F}L$.

The symbol $L$ usually denotes an algebra over a field, except that
in Section \ref{sec:Metrics-and-heights}, it denotes an invertible
sheaf.

Formulas of the form $\sim CB^{\alpha}(\log B)^{\beta}$ are always
asymptotic and we understand that $B$ tends to the infinity and $C$
is a certain positive real constant independent of $B$.

\section{Adelic metrics and heights\label{sec:Metrics-and-heights}}

In this section, we briefly recall adelic metrics on invertible sheaves
and associated height functions. For details we refer the reader to
\cite{MR1340296,MR2019019,MR2647601}. We slightly generalize these
notions to $\QQ$-divisors, which is rather straightforward. 

Let $X$ be a projective variety over a number field $K$ and $L$
an invertible sheaf on $X$. For any field extension $F/K$ and $x\in X(F)$,
we denote by $L(x)$ the pull-back of $L$ by $x:\Spec F\to X$. For
$\fp\in M_{K}$, a $\fp$-\emph{adic metric} on $L$ is the data of
$\fp$-adic norms on $L(x)$, $x\in X(K_{\fp})$, satisfying the following
continuity: for a section $s$ of $L$ over a Zariski open subset
$U\subset X$, the function 
\[
U(K_{\fp})\to\RR_{>0},\,x\mapsto\left\Vert s(x)\right\Vert _{\fp}
\]
is continuous.

For a finite set $S\subset M_{K}$ containing all infinite places,
a \emph{model }of $(X,L)$ over $\cO_{S}$ means the pair $(\cX,\cL)$
of an integral projective $\cO_{S}$-scheme $\cX$ and an invertible
sheaf $\cL$ on it such that $X$ is the generic fiber of $\cX\to\Spec\cO_{S}$
and $L$ is the pullback of $\cL$ to $X$. We also say that $\cL$
is a model of $L$. A model determines a specific choice of a $\fp$-adic
metric on $L$ for $\fp\in S^{c}$ as follows. For $x\in X(K_{\fp})$,
which is identified with an $\cO_{\fp}$-point of $\cX$, the $K_{\fp}$-line
$L(x)$ contains the pull-back $\cL(x)$ of $\cL$ by $x:\Spec\cO_{\fp}\to\cX$.
Fixing an isomorphism $\alpha:L(x)\xrightarrow{\sim}K_{\fp}$ mapping
$\cL(x)$ onto $\cO_{\fp}$, we define a $\fp$-adic metric $\left\Vert \cdot\right\Vert _{\fp}$
on $L(x)$ by 
\[
\left\Vert y\right\Vert _{\fp}:=\left\Vert \alpha(y)\right\Vert _{\fp}.
\]

An\emph{ (adelic) metric} on $L$ is a collection $(\left\Vert \cdot\right\Vert _{\fp})_{\fp\in M_{K}}$
of $\fp$-adic metrics on $L$ such that there exists a model $(\cX,\cL)$
of $(X,L)$, say over $\cO_{S}$, defining $\left\Vert \cdot\right\Vert _{\fp}$
for $\fp\in S^{c}$. We call an invertible sheaf endowed with a metric
a \emph{metrized invertible sheaf. }For a metrized invertible sheaf
$L$, we often fix such a model $(\cX,\cL)$ as above and call $\cL$
\emph{the }model of $L$.

Given a metrized invertible sheaf $L$, the \emph{height }of $x\in X(K)$
is then defined as

\[
H(x):=\prod_{\fp\in M_{K}}\left\Vert y\right\Vert _{\fp}^{-1}
\]
for an arbitrary $y\in L(x)\setminus\{0\}$. Thanks to the product
formula, this is independent of the choice of $y$. When we need to
specify the sheaf $L$, we write $H_{L}$ for $H$. If $L'$ is another
metrized invertible sheaf, then the tensor product $L\otimes L'$
has the induced metric and the associated height function satisfies
\[
H_{L\otimes L'}(x)=H_{L}(x)\cdot H_{L'}(x).
\]

For our purpose, it is more convenient to use the divisorial notation
and to generalize to $\QQ$-divisors. By a \emph{$\QQ$-divisor} on
a scheme, we mean a $\QQ$-Cartier $\QQ$-Weil divisor. We say that
a $\QQ$-divisor $D$ on $X$ is \emph{metrized }if for a certain
integer $r>0$, $rD$ is Cartier and $\cO_{X}(rD)$ is metrized. For
such a metrized $\QQ$-divisor $D$, we define the height function
$X(K)\to\RR_{>0}$ by 
\[
H_{D}(x):=H_{rD}(x)^{1/r}.
\]
For two metrized $\QQ$-divisors $D$ and $D'$, the sum $D+D'$ is
naturally metrized and we have 
\[
H_{D+D'}(x)=H_{D}(x)\cdot H_{D'}(x).
\]
Let us consider a $\QQ$-divisor $\cD$ on $\cX$ whose restriction
to $X$ is $D$. When the metrized sheaf $\cO_{X}(rD)$ has the model
$\cO_{\cX}(r\cD)$, then we call $\cD$ the \emph{model }of the metrized
divisor $D$. 

If $f:Y\to X$ is a morphism of $K$-varieties and $D$ is a metrized
$\QQ$-divisor on $X$, then the pull-back divisor $f^{*}D$ is naturally
metrized. For $y\in Y(K)$, we have
\begin{equation}
H_{f^{*}D}(y)=H_{D}(f(y)).\label{eq:ht-eq}
\end{equation}

\section{Singularities\label{sec:Singularities}}

In this section, we recall the notion of \emph{discrepancies}, which
are important invariants of singularities especially in the birational
geometry, and related notions. We refer the reader to \cite{MR3057950}
for more details. 

Let $X$ be a normal variety over an arbitrary field $F$ of characteristic
zero with the canonical divisor $K_{X}$ $\QQ$-Cartier. For a proper
birational morphism $f:Y\to X$ with $Y$ normal, we can uniquely
write 
\begin{equation}
K_{Y}=f^{*}K_{X}+\sum_{E}a(E)\cdot E\label{eq:discrep}
\end{equation}
where $a(E)\in\QQ$ and $E$ runs over the exceptional prime divisors.
The rational number $a(E)+1$ is called the \emph{log discrepancy
}of $E$ ($a(E)$ itself is called the \emph{discrepancy}).

A \emph{divisor over $X$ }means a prime divisor on a normal variety
$Y$ proper birational over $X$. Two divisors over $X$ should be
considered as identical if they give the same valuation on the function
field of $X$. Namely a divisor over $X$ is a divisorial valuation
of the function field $K(X)$ having its center on $X$. The log discrepancy
is actually an invariant of a divisor over $X$, that is, independent
of the birational model on which the divisor lies. 
\begin{defn}
The \emph{minimal log discrepancy }of $X$ is defined as
\[
\mld(X):=\inf_{E}\{a(E)+1\},
\]
where $E$ runs over all divisors over $X$. We call $X$ (or singularities
of $X$) \emph{terminal (resp. canonical, log terminal) }if $\mld(X)>1$
(resp. $\ge1$, $>0$).
\end{defn}
Suppose that $X$ is log terminal. For a log resolution $f:Y\to X$,
we have 
\[
\mld(X)=\min\{a(E)+1\mid E\text{ exceptional divisor of }f\}.
\]

\begin{defn}
We define a\emph{ crepant divisor (resp. minimally discrepant divisor)
}over $X$ as a divisor $E$ over $X$ with $a(E)=0$ (resp. $a(E)+1=\mld(X)$
). 
\end{defn}
If $X$ is canonical, then there exist only finitely many crepant
divisors over $X$ and they all appear on $Y$ for an arbitrary log
resolution $Y\to X$. The same is true for minimally discrepant divisors
if $X$ is log terminal. 
\begin{notation}
We denote the number of crepant divisors over $X$ by $\gamma(X)$
and the one of minimally discrepant divisors by $\delta(X)$ . 
\end{notation}
We note that the minimal log discrepancy is stable under extensions
of the base field. In particular, 
\[
\mld(X)=\mld(X_{\overline{F}}).
\]
However $\delta(X)$ and $\gamma(X)$ can change by extensions of
the base field. This happens when relevant divisors over $X$ are
not geometrically irreducible. However, if $F'/F$ is a sufficiently
large finite extension, then $\delta(X_{F'})=\delta(X_{\overline{F}})$
and $\gamma(X_{F'})=\gamma(X_{\overline{F}})$.

\section{Rational points on singular Fano varieties\label{sec:Rational-points-on}}

In this section, we discuss the distribution of rational points on
(possibly singular) Fano varieties, especially Manin's conjecture
\cite{Franke:1989go} and several variants of it.

\subsection{Smooth varieties}

Let $X$ be a projective variety over a number field $K$ and $D$
a metrized $\QQ$-divisor on it. For any subset $U\subset X(K)$ and
for $B>0$, let
\[
\cN_{U,D}(B):=\sharp\{x\in U\mid H_{D}(x)\le B\}.
\]
When there is no confusion, we omit the subscript $D$. If $\cN_{U}(B)<\infty$
for all $B$, it is natural to study its asymptotic behavior as $B$
tends to infinity. If $D$ is ample, then the finiteness of $\cN_{U}(B)$
holds for any $U$ and $B$. As a slight generalization, if $D$ is
big, then there exists a Zariski open subset $V\subset X$ such that
$\cN_{V(K)}(B)$ is finite for every $B>0$. 

Manin's conjecture is about the case where $X$ is a smooth Fano variety,
that is, the anti-canonical divisor $-K_{X}$ is ample, and $D=-K_{X}$.
We state a variant of it which uses cothin subsets.
\begin{defn}
\label{def:thin-thick}For a $K$-variety $X$, a subset $A\subset X(K)$
is said to be \emph{thin }if there exists a generically finite morphism
$Y\to X$ of $K$-varieties without admitting a rational section $X\dashrightarrow Y$
such that $A$ is contained in the image of $Y(K)$. A subset of $X(K)$
is said to be \emph{cothin }if it is the complement of a certain thin
subset. 
\end{defn}
The following is a variant of Manin's conjecture:
\begin{conjecture}
\label{conj:BM}Let $X$ be a smooth Fano variety over $K$. If $K$
is sufficiently large, then for a sufficiently small cothin subset
$U\subset X(K)$ and for an arbitrary adelic metric on $-K_{X}$,
we have 
\[
\cN_{U}(B)\sim CB(\log B)^{\rho(X)-1}.
\]
Here $\rho(X)$ is the Picard number, the rank of the Néron--Severi
group. 
\end{conjecture}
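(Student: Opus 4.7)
The statement is a version of Manin's conjecture with cothin subsets in place of Zariski open subsets, and for smooth $X$ it is open in full generality; my plan is therefore not to prove it outright but to indicate the standard attack and where the genuine obstruction lies. The primary tool is the height zeta function $Z_{U}(s):=\sum_{x\in U}H_{-K_{X}}(x)^{-s}$. By a Tauberian theorem (as recalled in Section \ref{sec:Height-zeta-functions}), the asymptotic $\cN_{U}(B)\sim CB(\log B)^{\rho(X)-1}$ follows once we establish that $Z_{U}(s)$ converges for $\Re(s)>1$, admits a meromorphic continuation slightly past the line $\Re(s)=1$, and has a pole at $s=1$ of order exactly $\rho(X)$ with positive leading coefficient. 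Thus the task reduces to producing such analytic information about $Z_{U}$ for a suitable cothin $U$.

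For the first step I would attempt meromorphic continuation by one of the established routes, chosen according to the geometry of $X$: for toric $X$ one uses Poisson summation on the adelic torus as in Batyrev--Tschinkel; for equivariant compactifications of connected algebraic groups one invokes harmonic analysis on $G(\mathbb{A}_{K})$ following Chambert-Loir--Tschinkel; for low-degree hypersurfaces one applies the circle method; and in general one would pass to a resolution, stratify by the structure of the effective cone, and try to build $Z$ out of residues attached to the boundary components, controlled via a universal torsor of Cox-type. The passage to a sufficiently large $K$ is natural here because one needs all N\'eron--Severi classes defined over $K$ and the boundary divisors to be geometrically irreducible over $K$, which ensures that the predicted pole order $\rho(X)=\rho(X_{\overline{K}})$ matches what the analysis produces.

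The second step, and the genuine obstacle, is the removal of accumulating loci. Since $X$ may contain proper subvarieties $Y\subset X$ on which $H_{-K_{X}}$ concentrates too many points (for instance, covers of $X$ by lower-degree varieties contributing to a pole of higher order or at a larger $\Re(s)$), Zariski-open truncation is insufficient; this is exactly why cothin subsets were introduced by Peyre and discussed by Le~Rudulier and Browning--Loughran. The plan is to enumerate the finitely many ``critical'' generically finite covers $Y_{i}\to X$ that plausibly violate the Batyrev--Manin balance, subtract their $K$-point images to form a cothin $U$, and verify that the remaining zeta function has the desired leading behavior. The hard part is to show that only finitely many such covers matter and that their contributions can in fact be quantified; this is essentially Peyre's freeness/thin-set conjecture, and proving it in generality (in particular controlling the full descent tower in the absence of a group action) is the main barrier. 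Absent a general proof, one would carry out this program case by case, as has been done successfully in all of the settings mentioned above.
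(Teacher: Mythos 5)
This statement is labeled as a conjecture in the paper, and the paper gives no proof of it: it is a variant of Manin's conjecture (with cothin rather than Zariski-open subsets) that is simply stated, attributed to the circle of ideas of Franke--Manin--Tschinkel, Peyre, Le~Rudulier and Browning--Loughran, and then used as an input to the heuristic comparison with Malle's conjecture. Your write-up correctly recognizes that the statement is open, and your survey of the height-zeta-function strategy, the Tauberian reduction, and the role of accumulating thin subsets is accurate and consistent with how the paper frames the conjecture; there is nothing to check it against, since no proof exists in the paper or in the literature in this generality.
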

The precise meaning of ``if $K$ is sufficiently large'' is that
there exists a finite extension $K_{0}/K$ such that for all finite
extension $L/K_{0}$, the relevant statement is true after the base
change to $L$. This condition hopefully assures that $X(K)$ is not
too small (at least not empty) and $\rho(X)$ is equal to $\rho(X_{\overline{K}})$
so that the subtleness of the exponent of the log factor would be
diminished.

Originally the $K$-point set of a Zariski open subset was considered
rather than a cothin subset (see \cite{Franke:1989go,MR1032922}).
However that version has a counter-example found by Batyrev--Tschinkel
\cite{Batyrev:1996ut}. Peyre \cite{MR2019019} then suggested to
consider cothin subsets as one of possible ways to remedy the pathological
situation. Le Rudulier \cite{LeRudulierSurface} and Browning--Loughran
\cite{Browning:2013aa} more seriously studied this modification of
the conjecture. 

Next we consider a generalization by Batyrev--Manin \cite{MR1032922}.
Let $X$ be a smooth projective variety over $K$, $\NS(X)$ its Néron-Severi
group, $\NS(X)_{\RR}:=\NS(X)\otimes\RR$ and $\Eff(X)\subset\NS(X)_{\RR}$
the pseudo-effective cone, the closure of the cone generated by the
classes of effective divisors. We suppose that $K_{X}\notin\Eff(X)$,
which is equivalent to that $X$ is uniruled \cite{Boucksom:2013er}.
Let $D$ be a big $\QQ$-divisor, that is, its class is in the interior
of $\Eff(X)$ (see \cite[p. 147]{MR2095471}). 
\begin{defn}
We define $\alpha(D)\in\RR$ by the condition
\[
\alpha(D)\cdot[D]+[K_{X}]\in\partial\Eff(X),
\]
where $\partial\Eff(X)$ denotes the boundary of $\Eff(X)$. We denote
the point $\alpha(D)\cdot[D]+[K_{X}]$ by $\partial(D)$.
\end{defn}
Batyrev--Manin \cite{MR1032922} conjectures that the cone $\Eff(X)$
is polyhedral around $\partial(D)$. 
\begin{defn}
Under this conjecture, we let $\beta(D)$ be the codimension in $\NS(X)_{\RR}$
of the minimal face of $\Eff(X)$ containing $\partial(D)$. 
\end{defn}
The following is a variant of their conjecture:
\begin{conjecture}
\label{conj:BM general D}Let $X$ be a smooth projective variety
over a number field $K$ with $K_{X}\notin\Eff(X)$ and $D$ a big
divisor on $X$. If $K$ is sufficiently large, then for a sufficiently
small cothin subset $U\subset X(K)$, giving an arbitrary adelic metric
to $D$, we have 
\[
\cN_{U,D}(B)\sim CB^{\alpha(D)}(\log B)^{\beta(D)-1}.
\]

\end{conjecture}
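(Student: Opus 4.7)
The plan is to study the height zeta function $Z_{U,D}(s) := \sum_{x \in U} H_D(x)^{-s}$ and extract the asymptotic via a standard Tauberian theorem. Specialising to $D = -K_{X}$ reduces Conjecture \ref{conj:BM general D} to Conjecture \ref{conj:BM}, since then $\alpha(D) = 1$ and $\beta(D) = \rho(X)$, so any attack must inherit the machinery developed for Manin's original conjecture.

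First, I would enlarge $Z_{U,D}(s)$ to a multi-variable height zeta function $Z_U(\bs)$ indexed by a basis of $\NS(X)_{\QQ}$, and restrict it to the ray through $[D]$. The expected abscissa of convergence along this ray is the largest $t$ for which $t[D] + [K_X]$ remains in the interior of $\Eff(X)$, which by construction equals $\alpha(D)$. Similarly, $\beta(D)$, the codimension of the minimal face of $\Eff(X)$ containing $\partial(D)$, is precisely the number of independent linear forms among the facet equations that vanish at $\partial(D)$, and should therefore govern the order of the pole of $Z_U(\bs)$ at $s = \alpha(D)$ through a residue computation against those facets. So the numerical invariants in the statement match exactly the predictions of the Batyrev--Manin geometric recipe.

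Second, assuming a meromorphic continuation of $Z_U(\bs)$ to a tube domain cut out by the facets of $\Eff(X)$, with simple poles along each facet hyperplane, a multidimensional Tauberian theorem of Chambert-Loir--Tschinkel type applied at $\partial(D)$ yields $\cN_{U,D}(B) \sim C B^{\alpha(D)} (\log B)^{\beta(D)-1}$. The cothin hypothesis on $U$ is not a technicality: without it, thin accumulating subvarieties inflate the count beyond what the global zeta function predicts, as in the Batyrev--Tschinkel counterexample \cite{Batyrev:1996ut}. The condition that $K$ is sufficiently large ensures that $\NS(X_{K}) = \NS(X_{\overline{K}})$ and that $X(K)$ is large enough for the heuristic to kick in, so that the log exponent is not diminished by Galois-theoretic degeneracy.

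The principal obstacle, as for Manin's conjecture itself, is the meromorphic continuation step: for a general smooth projective $X$ with $K_X \notin \Eff(X)$ no uniform technique exists. All established methods rely on substantial extra structure on $X$---harmonic analysis on adelic groups for flag varieties and equivariant compactifications of algebraic groups, Poisson summation for toric varieties, the circle method for low-degree complete intersections, universal torsors combined with lattice point counts for certain Del Pezzo surfaces. Moreover, the Batyrev--Manin polyhedrality hypothesis on $\Eff(X)$ near $\partial(D)$ that underlies the very definition of $\beta(D)$ is itself open in general, and is typically reduced to finite generation of the Cox ring. Consequently any rigorous proof in full generality appears to require both a new geometric input (for instance, finite generation of the Cox ring) and an analytic technique tailored to the variety at hand.
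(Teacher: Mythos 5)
This statement is a conjecture in the paper --- a variant of the Batyrev--Manin conjecture --- and the paper offers no proof of it; it is stated as an open prediction and then used as an input (together with a lemma on log resolutions) to deduce the singular-Fano version, Conjecture \ref{conj:BT}. So there is no proof in the paper against which to measure your argument, and, correctly, you have not produced one either: your final paragraph concedes that both the meromorphic continuation of the height zeta function and the polyhedrality of $\Eff(X)$ near $\partial(D)$ are open in general.

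What you have written is an accurate and well-organized account of the standard heuristic behind the conjecture: the identification of $\alpha(D)$ with the abscissa of convergence along the ray through $[D]$, the interpretation of $\beta(D)$ as the expected pole order via the facets of $\Eff(X)$ through $\partial(D)$, the role of a Chambert-Loir--Tschinkel-type Tauberian theorem, and the necessity of the cothin and ``$K$ sufficiently large'' hypotheses (the latter ensuring $\rho(X)=\rho(X_{\overline{K}})$ and, more generally, that the relevant facet data is Galois-stable). This matches the motivation the paper itself gestures at. But be careful to present it as motivation rather than proof: phrases like ``yields $\cN_{U,D}(B)\sim CB^{\alpha(D)}(\log B)^{\beta(D)-1}$'' are conditional on hypotheses (continuation to a tube domain with controlled poles) that are themselves at least as hard as the conjecture. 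If your assignment was to prove the statement, the correct answer is that it cannot currently be proved and the paper does not claim otherwise; if it was to justify or motivate the conjecture, your write-up does that adequately.
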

If $X$ is a Fano variety and $D=-K_{X}$, we recover Conjecture \ref{conj:BM}.

\subsection{Log terminal Fano varieties}

When the given Fano variety has singularities, we cannot generally
expect the asymptotic formula of the same form and need to take contribution
of singularities into account. Batyrev--Tschinkel \cite{MR1679843}
explored Fano varieties having canonical singularities. The following
conjecture is a variant of a special case of their conjectural formula
(page 323, \emph{loc.cit.}).
\begin{conjecture}
\label{conj:BT}Let $X$ be a log terminal Fano variety and $\gamma(X)$
be the number of crepant divisors over $X$. If $X$ is canonical
and $K$ is sufficiently large, then for a sufficiently small cothin
$U\subset X(K)$, 
\[
\cN_{U}(B)\sim CB(\log B)^{\rho(X)+\gamma(X)-1}.
\]
If $X$ is not canonical and $K$ is sufficiently large $K$, then
for any cothin $U\subset X(K)$, we have 
\[
\cN_{U}(B)\sim CB^{\alpha}(\log B)^{\beta}
\]
with $\alpha>1$ and $\beta\ge0$. 
\end{conjecture}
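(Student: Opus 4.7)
My plan is to reduce the conjecture to the smooth Batyrev--Manin version (Conjecture \ref{conj:BM general D}) via a log resolution. Let $\pi : Y \to X$ be a log resolution of the given log terminal Fano variety $X$, so that by \eqref{eq:discrep}
\[
K_Y = \pi^{*}K_X + \sum_{E} a(E)\cdot E,\qquad a(E) > -1,
\]
the sum running over $\pi$-exceptional prime divisors. Set $D := \pi^{*}(-K_X) = -K_Y + \sum_E a(E)\cdot E$, equipped with the pulled-back adelic metric. Then \eqref{eq:ht-eq} gives $H_{-K_X}(\pi(y)) = H_D(y)$ for $y$ off the exceptional locus, and since preimages and images of thin subsets under a birational morphism are thin, counting $K$-points on $(X,-K_X)$ with a cothin condition reduces, up to a thin contribution, to counting $K$-points on $(Y,D)$ with a cothin condition.

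The heart of the argument is the evaluation of $\alpha(D)$ and $\beta(D)$. In $\NS(Y)_{\RR}$ one has $[D]+[K_Y] = \sum_E a(E)\cdot[E]$, and the classes of the $\pi$-exceptional prime divisors are linearly independent and each extremal in $\Eff(Y)$ (they are the unique effective representatives of their classes, by rigidity of exceptional divisors of a birational morphism to a normal variety), hence span a simplicial face of $\Eff(Y)$. In the canonical case all $a(E)\ge 0$, so $[D]+[K_Y]$ lies in that face, giving $\alpha(D) = 1$, and its minimal subface is spanned precisely by the non-crepant $E$. Granting $\QQ$-factoriality of $X$ (otherwise pass to a small $\QQ$-factorialization), $\rho(Y) = \rho(X) + \gamma(X) + \#\{E : a(E) > 0\}$, so this subface has codimension $\beta(D) = \rho(X) + \gamma(X)$, and Conjecture \ref{conj:BM general D} yields the predicted $\cN_U(B) \sim CB(\log B)^{\rho(X)+\gamma(X)-1}$.

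In the non-canonical case some $a(E_0) \in (-1,0)$ occurs; $[D]+[K_Y]$ then has a strictly negative coefficient along the extremal ray $\RR_{\ge 0}\cdot[E_0]$, so $[D]+[K_Y] \notin \Eff(Y)$, and forcing $\alpha\cdot[D]+[K_Y]$ back onto $\partial\Eff(Y)$ requires $\alpha(D) > 1$. The bound $\beta \ge 0$ is automatic from the definition since $\partial(D)$ lies on a proper face of $\Eff(Y)$.

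The principal obstacle is that Conjecture \ref{conj:BM general D} is itself open, so this is a conditional reduction rather than a proof. Secondary technical difficulties are that $\Eff(Y)$ need not be polyhedral, so $\beta(D)$ must be extracted by a local analysis near $\partial(D)$ (tractable because the relevant configuration lives in the simplicial face spanned by $\pi$-exceptional divisors), and that independence of the answer from the choice of log resolution must be checked, which follows from the birational invariance of discrepancies.
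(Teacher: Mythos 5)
Your reduction is essentially the paper's own: the paper explicitly deduces Conjecture \ref{conj:BT} from Conjecture \ref{conj:BM general D} together with a lemma computing, for a log resolution $f:Y\to X$, that $\alpha(-f^{*}K_{X})=1$ in the canonical case (and $>1$ otherwise) and that the minimal face of $\Eff(Y)$ containing $\partial(-f^{*}K_{X})$ has codimension $\rho(X)+\gamma(X)$, using linear independence and extremality of the exceptional classes exactly as you do (the paper cites Chen--Coskun for extremality and adds a moving-curve argument to place $\sum a(E)[E]$ on $\partial\Eff(Y)$, where you appeal to the face structure directly). Your treatment of thin sets under the resolution and of the non-canonical case matches the paper's, so this is the same conditional argument.
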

Actually one can deduce this conjecture (at least with $U$ sufficiently
small for the second assertion) from Conjecture \ref{conj:BM general D}
and the following lemma.
\begin{lem}
Let $f:Y\to X$ be a log resolution of a log terminal Fano variety
$X$. We have that 
\[
\alpha(-f^{*}K_{X})\begin{cases}
=1 & (X\text{: canonical})\\
>1 & (\text{otherwise})
\end{cases}.
\]
Moreover, if $X$ is canonical, $K$ is sufficiently large and $\Eff(X)$
is polyhedral around $\partial(-f^{*}K_{X})$, then the minimal face
of $\Eff(X)$ containing $\partial(-f^{*}K_{X})$ has codimension
$\rho(X)+\gamma(X)$.\end{lem}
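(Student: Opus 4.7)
My plan is to expand everything using the discrepancy relation $K_Y=f^{*}K_X+\sum_{E}a(E)\,E$, where $E$ ranges over the exceptional prime divisors of $f$, and then to read off $\alpha$ and $\beta$ from the exceptional part of $\Eff(Y)$.

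For part (1), rewrite $t[-f^{*}K_X]+[K_Y]=(t-1)[-f^{*}K_X]+\sum_{E}a(E)[E]$. For $t<1$ the pushforward $f_{*}$ maps this to the anti-ample class $(t-1)[-K_X]$, which lies outside $\Eff(X)$; since $f_{*}\Eff(Y)\subset\Eff(X)$, the class on $Y$ is not pseudo-effective, so $\alpha(-f^{*}K_X)\ge 1$. At $t=1$ the class equals the exceptional $\sum_{E}a(E)[E]$. If $X$ is canonical every $a(E)\ge 0$ and this class is effective, giving $\alpha=1$. If $X$ is log terminal but not canonical some $a(E)<0$, and the crucial input is then the following positivity statement: an exceptional $\RR$-divisor $D=\sum_{E}b(E)\,E$ with $[D]\in\Eff(Y)$ must satisfy $b(E)\ge 0$ for every $E$. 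This forces the class at $t=1$ out of $\Eff(Y)$, hence $\alpha>1$. I would prove the positivity statement via Nakayama's $\sigma$-decomposition (a pseudo-effective class supported on exceptional directions is a limit of effective $\RR$-divisors whose non-exceptional parts shrink to zero), or equivalently via Shokurov's negativity lemma applied after verifying that an exceptional pseudo-effective class is automatically $f$-anti-nef.

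For part (2), assume $X$ is canonical and enlarge $K$ so that every exceptional prime of $f$ is geometrically irreducible; $\QQ$-factoriality of the log terminal Fano $X$ then yields the split exact sequence $0\to\bigoplus_{E}\RR[E]\to\NS(Y)_{\RR}\xrightarrow{f_{*}}\NS(X)_{\RR}\to 0$, so the classes $[E]$ are linearly independent and $\rho(Y)=\rho(X)+k$, where $k$ is the number of exceptional primes. Partition these into the $\gamma(X)$ crepant ones (with $a(E)=0$) and $m=k-\gamma(X)$ non-crepant ones (with $a(E)>0$). The simplicial cone $\mathcal{C}=\sum_{E}\RR_{\ge 0}[E]$ is a face of $\Eff(Y)$: if $[D]\in\mathcal{C}$ and $D=D_{1}+D_{2}$ with $D_{i}\in\Eff(Y)$, then $f_{*}D_{1}+f_{*}D_{2}=0$ forces $f_{*}D_{i}=0$ (as $\Eff(X)$ is pointed), and the positivity statement places each $[D_{i}]$ in $\mathcal{C}$; being simplicial, $\mathcal{C}$ has every subset-generated sub-cone as a sub-face, hence as a face of $\Eff(Y)$ by transitivity.

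Finally, $\partial(-f^{*}K_X)=\sum_{a(E)>0}a(E)[E]$ lies in the relative interior of the $m$-dimensional face $\mathcal{C}^{\mathrm{nc}}$ spanned by the non-crepant exceptional classes, so under the polyhedrality hypothesis this is the minimal face of $\Eff(Y)$ through that point, of codimension
\[
\rho(Y)-m=(\rho(X)+k)-m=\rho(X)+\gamma(X),
\]
as required. The main obstacle is the positivity lemma for pseudo-effective exceptional classes underlying both parts: for effective divisors this is standard negativity, but extending it to arbitrary pseudo-effective classes requires either the Zariski--Nakayama asymptotic tools or a careful reduction from the absolute pseudo-effective condition to the relative anti-nef condition.
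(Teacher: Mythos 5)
Your proof is correct and follows essentially the same skeleton as the paper's: expand via the discrepancy relation, identify $\partial(-f^{*}K_{X})$ with the exceptional class $\sum_{E}a(E)[E]$, and count the codimension of the face spanned by the non-crepant exceptional classes. The two places where you diverge are in the justifications. For $\alpha\ge 1$ the paper shows $B-\epsilon A\notin\Eff(Y)$ by intersecting with a general complete-intersection curve that avoids the exceptional locus (a covering family of curves), whereas you push forward to $X$ and use that $(t-1)(-K_{X})$ is anti-ample; both are valid, and yours is arguably cleaner. For the face structure, the paper asserts the positivity statement (a pseudo-effective class supported on the exceptional classes has non-negative coefficients) and cites a lemma of Chen's for the extremality of each ray $\RR_{\ge0}[E_{i}]$, while you derive the face property directly from that same positivity statement together with salience of $\Eff(X)$, and you correctly flag the positivity statement as the one nontrivial input, to be proved via Nakayama's $\sigma$-decomposition or a negativity-lemma argument. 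Your version is slightly more self-contained on this point; note only that both you and the paper implicitly use $\QQ$-factoriality of $X$ (so that the exceptional classes span the kernel of $f_{*}$ and $\rho(Y)=\rho(X)+k$), which does not follow from log terminality alone and should be stated as a hypothesis or checked for the quotient varieties to which the lemma is applied.
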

\begin{proof}
Let $f:Y\to X$ be a log resolution and write 
\[
K_{Y}-f^{*}K_{X}=\sum_{i=1}^{l}a_{i}E_{i}
\]
where $E_{i}$ are exceptional prime divisors and $a_{i}\in\QQ$.
We first show the first assertion in the case where $X$ is canonical
(that is, for every $i$, $a_{i}\ge0$). Since $K_{Y}$ and $-f^{*}K_{X}$
are both big, the invariant $\alpha(-f^{*}K_{X})$ is defined. The
class 
\[
B:=[K_{Y}]-[f^{*}K_{X}]=\sum a_{i}[E_{i}]
\]
clearly belongs to $\Eff(X)$. To see that it lies in the boundary
$\partial\Eff(X)$, it suffices to show that for an ample class $A$
and for any $\epsilon>0$, $B-\epsilon A$ does not belong to $\Eff(Y)$.
Let $U\subset X$ be the locus where $f$ is an isomorphism. Its complement
$X\setminus U$ has codimension $\ge2$. If $C\subset X$ is a smooth
curve obtained as the intersection of $\dim X-2$ general hyperplane
sections, then $C$ is contained in $U$ and its lift $\tilde{C}$
to $Y$ does not meet the exceptional locus of $f$. Therefore, the
intersection number, $B\cdot\tilde{C}$, is negative. However, $\tilde{C}$
is a member of a family of curves covering general points of $Y$,
any class in $\Eff(Y)$ should intersect $\tilde{C}$ with a non-negative
intersection number. We have proved the first assertion in this case.

Next consider the case where $X$ is not canonical. At least one of
$a_{i}$ is negative. The intersection of $\Eff(Y)$ and the linear
space spanned by $[E_{i}]$, $i=1,\dots,l$ is the cone spanned by
$[E_{i}]$, $i=1,\dots,l$. This shows that the divisor class $B$
is not in $\Eff(Y)$. 

As for the second assertion, from \cite[Lem. 4.1]{Chen:2014fh}, each
ray $\RR_{\ge0}[E_{i}]$ is an extremal ray of $\Eff(Y)$. Moreover
they are linearly independent. It follows that the minimal face of
$\Eff(Y)$ containing $B$ is the cone generated by those classes
$[E_{i}]$ with $a_{i}>0$, whose codimension is $\rho(X)+\gamma(X)$. 
\end{proof}

\subsection{Toric Fano varieties}

When the given Fano variety is not canonical, then it seems generally
difficult to determine the exponents of $B$ and $\log B$ in the
expected asymptotic formula. In the case of toric varieties, however,
we can obtain a natural upper bound. We mean by a toric variety an
equivariant compactification of the torus $T=(\GG_{m,K})^{d}$.\footnote{Batyrev and Tschinkel \cite{MR1423638,MR1620682} considers a more
general notion of algebraic tori, which become isomorphic to $(\GG_{m,K})^{d}$
after a finite extension of the base field $K$.} 
\begin{lem}
\label{lem:toric-alpha}Let $X$ be a log terminal toric Fano variety
which is not canonical (that is, $0<\mld(X)<1$) and $f:Y\to X$ a
toric resolution. Then 
\[
\alpha(-f^{*}K_{X})\le\frac{1}{\mld(X)}.
\]
Moreover, if the equality holds, then the minimal face of $\Eff(Y)$
containing $\partial(-f^{*}K_{X})$ has codimension $\le\delta(X)$.\end{lem}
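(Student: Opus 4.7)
The plan is to work in the toric language: expand $\alpha[-f^*K_X] + [K_Y]$ as an explicit non-negative combination of the torus-invariant prime divisors on $Y$ at the value $\alpha = 1/\mld(X)$, then read off both assertions directly from the discrepancy data.

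First I would set up notation. Let $D_i$, $i\in I$, be the torus-invariant prime divisors of $Y$ and $J\subset I$ those exceptional over $X$; write $l_j := a_j + 1$ for the log discrepancy of $D_j$, $j\in J$, so $l_j\ge\mld(X)$ with equality on the subset $J_0\subset J$ of minimally discrepant exceptional divisors realized on $Y$. Combining $-K_Y=\sum_{i\in I}D_i$ with the discrepancy formula \eqref{eq:discrep} yields
\[
-f^{*}K_X \;=\; \sum_{i\in I\setminus J} D_i \;+\; \sum_{j\in J} l_j\, D_j,
\]
and therefore
\[
\alpha[-f^{*}K_X] + [K_Y] \;=\; \sum_{i\in I\setminus J}(\alpha - 1)[D_i] \;+\; \sum_{j\in J}(\alpha l_j - 1)[D_j].
\]

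Next I would evaluate at $\alpha = 1/\mld(X)$. Since $0<\mld(X)<1$ by hypothesis, all coefficients on the right-hand side are non-negative and vanish precisely when $j\in J_0$. The classical fact that the effective cone of a toric variety is generated by its torus-invariant prime divisor classes then places this point inside $\Eff(Y)$. Together with $K_Y\notin\Eff(Y)$ (noting that $-K_Y$ is big), the definition of $\alpha(-f^{*}K_X)$ forces $\alpha(-f^{*}K_X)\le 1/\mld(X)$, which is the first assertion.

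For the equality case, suppose $\alpha(-f^{*}K_X) = 1/\mld(X)$. Then from the formula above, $\partial(-f^{*}K_X)$ is a strictly positive combination of $\{[D_i]:i\in I\setminus J_0\}$. I would use the elementary principle that a face of a convex cone containing a strictly positive combination of cone elements must contain each summand; thus the minimal face $F$ of $\Eff(Y)$ containing $\partial(-f^{*}K_X)$ contains the subcone generated by those classes. Since $\{[D_i]:i\in I\}$ spans $\NS(Y)_{\RR}$ and only $|J_0|$ elements have been discarded, a dimension count gives $\codim F\le |J_0|$. Finally, $J_0$ indexes a subset of the minimally discrepant divisors over $X$, so $|J_0|\le\delta(X)$, completing the required bound.

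The only point that needs care — not really a serious obstacle — is the step ``the minimal face $F$ contains each summand $[D_i]$'', which rests on the face property of convex cones rather than on anything specific to the toric setup. The sole classical input is the toric generation of $\Eff(Y)$ by torus-invariant prime divisors; everything else is the direct expansion above together with the linear-algebra count of how much span can drop when finitely many generators are removed.
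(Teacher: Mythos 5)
Your proposal is correct and follows essentially the same route as the paper's proof: expand $c[-f^{*}K_X]+[K_Y]$ in the torus-invariant prime divisor classes, observe that at $c=1/\mld(X)$ all coefficients are non-negative and vanish exactly on the minimally discrepant exceptional divisors, and use that these classes generate $\Eff(Y)$ and span $\NS(Y)_{\RR}$ for the codimension count. Your explicit appeal to the face property of convex cones and the careful $|J_0|\le\delta(X)$ are minor refinements of the same argument.
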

\begin{proof}
Let $D_{i}$, $i\in I$ be the torus invariant prime divisors on $X$
and $\tilde{D}_{i}$ their respective strict transforms on $Y$. Let
$E_{j}$, $j\in J$ be the exceptional prime divisors of $f$. Canonical
divisors of $X$ and $Y$ have the natural expressions, 
\[
K_{X}=-\sum_{i\in I}D_{i}\text{ and }K_{Y}=-\sum_{i\in I}\tilde{D}_{i}-\sum_{j\in J}E_{j}.
\]
We write
\[
K_{Y}=f^{*}K_{X}+\sum_{j\in J}a_{j}E_{j}
\]
with $a_{j}>-1$. For a real number $c$, we have 
\begin{equation}
-c(f^{*}K_{X})+K_{Y}=\sum_{i}(c-1)\tilde{D}_{i}+\sum_{j}\left(c(1+a_{j})-1\right)E_{j}.\label{eq:pseudo-eff}
\end{equation}
For $c=1/\mld(X)>1$, this divisor is clearly effective, which shows
the first assertion.

Suppose that the equality, $\alpha(-f^{*}K_{X})=1/\mld(X)$, holds.
Let 
\[
J_{0}:=\{j\mid a_{j}+1=\mld(X)\},
\]
whose cardinality is by definition $\delta(X)$. In the right hand
side of (\ref{eq:pseudo-eff}), $D_{i}$, $i\in I$ and $E_{j}$,
$J\setminus J_{0}$ have positive coefficients and $E_{j}$, $j\in J_{0}$
have zero coefficients. We note that $\Eff(Y)$ is generated by $[D_{i}]$,
$i\in I$ and $[E_{j}]$, $j\in J$. It follows that the minimal face
containing $\partial(-f^{*}K_{X})$ is the cone generated by $[D_{i}]$,
$i\in I$ and $[E_{j}]$, $j\in J\setminus J_{0}$. Its codimension
is clearly at most $\sharp J_{0}=\delta(X)$. \end{proof}
\begin{rem}
The above proof shows that if we use the group of the torus invariant
divisors (NOT modulo linear equivalence) rather than the Néron--Severi
group and the cone of the effective torus invariant divisors, then
the equalities in the inequalities of the lemma hold. The numbers,
$1/\mld(X)$ and $\delta(X)$, are the exponents of an asymptotic
formula expected from our heuristics (Section \ref{sec:Manin-vs.-Malle})
and a generalization of Malle's conjecture (Conjecture \ref{conj:general Malle}).
On the other hand, these numbers are generally wrong ones from the
viewpoint of the Batyrev--Manin conjecture as we see below. Thus,
there seems to be some subtleness or obstruction so that our heuristics
do not work when $X$ is not canonical.\end{rem}
\begin{cor}
\label{cor:toric-strict-ineq}Suppose that $X$ is a toric Fano variety
of Picard number one with $0<\mld(X)<1$. Suppose that there exists
a cone of the defining fan which contains all singular cones of the
fan as its faces. Then, for some real numbers $a$ and $b$ with $1<a<1/\mld(X)$
and $b\ge0$, we have 
\[
\cN_{T(K),-K_{X}}(B)\sim CB^{a}(\log B)^{b}
\]
with $T$ the maximal torus.\end{cor}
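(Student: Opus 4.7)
The plan is to reduce to a smooth toric variety via a toric resolution $f\colon Y\to X$ and then to apply the toric case of the Batyrev--Manin conjecture. Since $f$ is an isomorphism over the open torus $T$, the equality \eqref{eq:ht-eq} gives $H_{-K_{X}}(t)=H_{-f^{*}K_{X}}(t)$ for every $t\in T(K)$, and hence
\[
\cN_{T(K),-K_{X}}(B)=\cN_{T(K),-f^{*}K_{X}}(B)
\]
is computed on the smooth toric $Y$ with the big divisor $-f^{*}K_{X}$. The Batyrev--Tschinkel theorem on rational points of toric varieties (applied to the smooth toric $Y$ and the big metrized divisor $-f^{*}K_{X}$) then yields
\[
\cN_{T(K),-K_{X}}(B)\sim CB^{\alpha}(\log B)^{\beta-1},
\]
where $\alpha=\alpha(-f^{*}K_{X})$ and $\beta=\beta(-f^{*}K_{X})$ are the Batyrev--Manin invariants of Section~\ref{sec:Rational-points-on}. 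It therefore suffices to verify $1<\alpha<1/\mld(X)$ and $\beta-1\ge0$.

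The lower bound $\alpha>1$ follows from the first lemma of this section, since $X$ is a non-canonical Fano. The upper bound $\alpha\le1/\mld(X)$ is exactly Lemma~\ref{lem:toric-alpha}, and $\beta-1\ge0$ is automatic from the definition.

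The crux is the strict inequality $\alpha<1/\mld(X)$. Because $X$ has Picard number one, the fan $\Sigma$ has exactly $d+1$ rays and $d+1$ maximal cones; the hypothesis forces the distinguished cone to be a unique singular maximal cone $\sigma_{0}\in\Sigma$, so every exceptional ray $v_{j}$, $j\in J$, of the toric resolution lies in the relative interior of $\sigma_{0}$. Recall that on the smooth toric $Y$, $\Eff(Y)$ is the cone generated by $[\tilde D_{i}]$, $i\in I$, and $[E_{j}]$, $j\in J$, with linear relations $\sum_{\rho}\langle m,v_{\rho}\rangle D_{\rho}\sim0$ for $m\in M$. For $c=1/\mld(X)-\epsilon$ write
\[
-cf^{*}K_{X}+K_{Y}+\lambda\,\mathrm{div}(m)=\sum_{i}\bigl((c-1)+\lambda\langle m,v_{i}\rangle\bigr)\tilde D_{i}+\sum_{j}\bigl((c(1+a_{j})-1)+\lambda\langle m,v_{j}\rangle\bigr)E_{j};
\]
it suffices to find $m\in M_{\RR}$ and $\lambda>0$ making every coefficient non-negative. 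Choose $m$ in the interior of $\sigma_{0}^{\vee}$, so $\langle m,v\rangle>0$ for every non-zero $v\in\sigma_{0}$. Then the coefficients of $E_{j}$ for $j\in J_{0}$, equal to $-\epsilon\,\mld(X)$ at $\lambda=0$, become non-negative by taking $\lambda$ proportional to $\epsilon$. Coefficients of $\tilde D_{i}$ at rays of $\sigma_{0}$ and of $E_{j}$ for $j\in J\setminus J_{0}$ were strictly positive already at $c=1/\mld(X)$ and are only reinforced by adding $\lambda\langle m,\cdot\rangle\ge0$. The sole remaining ray $v_{i_{0}}\in I$ lying outside $\sigma_{0}$ may have $\langle m,v_{i_{0}}\rangle<0$, but its coefficient $(c-1)+\lambda\langle m,v_{i_{0}}\rangle$ stays positive for small $\epsilon$, since $c-1\to1/\mld(X)-1>0$ while $\lambda=O(\epsilon)$. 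Hence $\alpha\le1/\mld(X)-\epsilon$, establishing the strict inequality.

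The hardest step is this last one: producing a single character $m$ with the right positivity on every exceptional ray in $J_{0}$ while only mildly deteriorating the coefficients at rays outside $\sigma_{0}$. This is precisely where the fan hypothesis enters, by concentrating all singular (hence all minimally discrepant exceptional) cones inside one full-dimensional cone $\sigma_{0}$, so that an element in $\mathrm{int}\,\sigma_{0}^{\vee}$ pairs strictly positively with every $v_{j}$, $j\in J$. Without such a hypothesis one cannot uniformly separate the rays in $J_{0}$ from the other extremal generators of $\Eff(Y)$ by a single linear functional, and the bound of Lemma~\ref{lem:toric-alpha} could fail to be strict.
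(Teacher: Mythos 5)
Your proof is correct and follows essentially the same route as the paper: reduce to a toric resolution $Y$ subdividing only the distinguished cone, invoke the toric case of Conjecture \ref{conj:BM general D} with $U=T(K)$, and then sharpen Lemma \ref{lem:toric-alpha} to a strict inequality using the Picard-number-one structure of the fan. The only difference is in the last step's packaging: the paper shows the divisor (\ref{eq:pseudo-eff}) at $c=1/\mld(X)$ lies in the \emph{interior} of $\Eff(Y)$ by writing $D_{0}$ as a strictly positive combination of the remaining invariant divisors, whereas you lower $c$ to $1/\mld(X)-\epsilon$ and restore effectivity by adding $\lambda\,\mathrm{div}(\chi^{m})$ with $m\in\mathrm{int}\,\sigma_{0}^{\vee}$ --- both arguments rest on the same underlying character.
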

\begin{proof}
Since Conjecture \ref{conj:BM general D} holds for smooth toric varieties
with $U=T(K)$, we need to show that the strict inequality holds in
Lemma \ref{lem:toric-alpha}. We follows the notation of Lemma \ref{lem:toric-alpha}
and its proof. The assumption on the Picard number shows that there
are exactly $d+1$ torus invariant prime divisors on $X$, say $D_{0},\dots D_{d}$
with $d=\dim X$. Without loss of generality, we may suppose that
the cone spanned by the rays corresponding to $D_{1},\dots,D_{d}$
contains all singular cones, and that the chosen toric resolution
$f:Y\to X$ subdivides only this cone and its faces. Then $D_{0}$
is linearly equivalent to $\sum_{i=1}^{d}a_{i}\tilde{D}_{i}+\sum_{j\in J}b_{j}E_{j}$
with $a_{i},b_{j}>0$. Therefore the class of $D_{0}$ is in the interior
of $\Eff(X)$. Therefore, for $c=1/\mld(X)$, the divisor (\ref{eq:pseudo-eff})
is not in the boundary of $\Eff(X)$. This shows the desired inequality. 
\end{proof}

\section{Height zeta functions\label{sec:Height-zeta-functions}}

The height zeta function is an analytic tool to study the distribution
of rational points. Suppose that a $K$-variety $X$ is given a height
function $H:X(K)\to\RR_{>0}$. For $U\subset X(K),$ we write again
\[
\cN_{U}(B)=\sharp\{x\in U\mid H(x)\le B\}.
\]
The \emph{height zeta function} $Z_{U}(s)$ is defined as the (generalized)
Dirichlet series
\[
Z_{U}(s):=\sum_{x\in U}H(x)^{-s}.
\]
If the height function is associated to a metrized ample divisor,
then $\cN_{U}(B)$ has the polynomial growth and hence $Z_{U}(s)$
is convergent for some half-plane $\Re(s)>a$. The infimum of such
$a$ is called the \emph{abscissa of convergence}. 

For heuristic arguments, it is convenient to assume meromorphic continuation
beyond the abscissa of convergence, although it is generally difficult
to prove it. For the sake of generality (we will consider the same
condition for extended $V$-discriminant zeta functions), we consider
the Dirichlet series 
\[
f(s):=\sum_{x\in I}F(x)^{-s}
\]
associated to an arbitrary function $F:I\to\RR_{>0}$ on a countable
set $I$.
\begin{condition}
\label{cond:merom-continuation}The series $f(s)$ converges for some
half-plane $\Re(s)>a$. If the abscissa of convergence, say $\alpha$,
is not $-\infty$, then $f(s)$ extends to a meromorphic function
along the line $\Re(s)=\alpha$ which is holomorphic except at $s=\alpha$. 
\end{condition}
Supposing that $f(s)$ satisfies the condition, let $\beta$ be the
order of the pole at $s=\alpha$ and 
\[
C:=\frac{1}{\Gamma(\beta)\alpha}\lim_{s\to\alpha}\frac{f(s)}{(s-\alpha)^{\beta}}.
\]
Then a version of Tauberian theorem shows
\[
\sharp\{x\in I\mid F(x)\le B\}\sim CB^{\alpha}(\log B)^{\beta-1}\quad(B\to\infty).
\]

\section{Peyre's constants for projective spaces\label{sec:Peyre's-constants-for}}

Peyre \cite{MR1340296} refined Manin's conjecture by specifying the
constant factor $C$ in terms of Tamagawa measures. We only need it
for projective spaces. In this case, his refinement is actually a
theorem and generalizes the earlier result of Schanuel \cite{MR0162787}
as the case of a special metric on the anti-canonical divisor. It
is essential in our work to consider various metrics. 

Let $X=\PP_{K}^{d}$ be the projective space over a number field $K$.
We suppose that the anti-canonical divisor $-K_{X}$ is metrized.
For each $\fp$, the metric defines a measure $\mu_{\fp}$ on the
$K_{\fp}$-analytic manifold $X(K_{\fp})$ as follows. For local coordinates
$x_{1},\dots,x_{d}$ on $X(K_{\fp})$, consider the differential form
\[
\omega=\left\Vert \frac{\partial}{\partial x_{1}}\wedge\cdots\wedge\frac{\partial}{\partial x_{d}}\right\Vert _{\fp}dx_{1}\wedge\cdots\wedge dx_{d}.
\]
The measure is locally defined by 
\[
\mu_{\fp}(A):=\int_{A}\omega.
\]
Here the integral is defined with respect to the normalized Haar measure
on $(K_{\fp})^{d}$. The local measures glue together and give a measure
on the whole space $X(K_{\fp})$. If the fixed model of $-K_{X}$
is the anti-canonical divisor $-K_{\PP_{\cO_{S}}^{d}/\cO_{S}}$ of
$\PP_{\cO_{S}}^{d}$ over $\cO_{S}$, then for $\fp\in S^{c}$, 
\[
\mu_{\fp}(X(K_{\fp}))=\frac{\PP_{\cO_{\fp}}^{d}(\kappa_{\fp})}{N_{\fp}^{d}}=\sum_{i=0}^{d}N_{\fp}^{-i},
\]
where $\kappa_{\fp}$ is the residue field of $K_{\fp}$ and $N_{\fp}:=\sharp\kappa_{\fp}$.
More generally, suppose that the model of $-K_{X}$ is 
\[
-K_{\PP_{\cO_{S}}^{d}/\cO_{S}}+\sum_{\fp\in S^{c}}a_{\fp}\PP_{\kappa_{\fp}}^{d},
\]
where $a_{\fp}$ are rational numbers and almost all of them are zero,
and $\PP_{\kappa_{\fp}}^{d}$ are regarded as prime divisors on $\PP_{\cO_{S}}^{d}$.
Then, for $\fp\in S^{c}$, the $\fp$-adic metric of $-K_{X}$ is
simply $N_{\fp}^{-a_{\fp}}$ times the $\fp$-adic metric induced
from the model $-K_{\PP_{\cO_{S}}^{d}/\cO_{S}}$ and 
\[
\mu_{\fp}(X(K_{\fp}))=N^{-a_{\fp}}\sum_{i=0}^{d}N_{\fp}^{-i}.
\]
 
\begin{thm}[{\cite[Cor. 6.2.17, 6.2.18]{MR1340296}. See also \cite{LeRudulierLine}.}]
\label{thm:Peyre}Let $c_{K}$ be the residue of the Dedekind zeta
function $\zeta_{K}(s)$ at $s=1$, let $\Disc_{K}$ be the absolute
Discriminant of $K$ and let 
\[
C_{\Peyre}:=\frac{c_{K}}{|\Disc_{K}|^{d/2}}\prod_{\fp\in M_{K,f}}\left(1-\frac{1}{N_{\fp}}\right)\mu_{\fp}(X(K_{\fp}))\times\prod_{\fp\in M_{K,\infty}}\mu_{\fp}(X(K_{\fp})).
\]
For any Zariski open subset $U\subset X$, we have
\[
\cN_{U(K)}(B)\sim C_{\Peyre}B.
\]
\end{thm}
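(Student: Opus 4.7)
The plan is to follow Schanuel's classical approach, modified to accommodate an arbitrary adelic metric on $-K_{X}$. First, I would parametrize $\PP^{d}(K)$ by integral representatives: fix representatives $\mathfrak{a}_{1},\dots,\mathfrak{a}_{h}$ of the ideal class group $\mathrm{Cl}(K)$, and note that every $K$-point of $\PP^{d}$ admits a unique presentation $[x_{0}:\cdots:x_{d}]$ with $(x_{0},\dots,x_{d})\in\mathfrak{a}_{c}^{d+1}$ generating $\mathfrak{a}_{c}$, modulo multiplication by $\cO_{K}^{\times}$. Under this parametrization, the height $H(x)$ splits as a product of archimedean factors (determined by $\|x_{\nu}\|_{\fp}$ for $\fp\in M_{K,\infty}$) and $\fp$-adic correction factors $N_{\fp}^{-a_{\fp}}$ at the finitely many finite places where the chosen adelic metric deviates from the one induced by the model $-K_{\PP_{\cO_{S}}^{d}/\cO_{S}}$.

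Next, the problem reduces to counting lattice points of $\prod_{\fp\in M_{K,\infty}}(K_{\fp})^{d+1}$ inside a scaling family of domains carved out by the archimedean part of $H$. A Möbius inversion over the integral ideals of $\cO_{K}$ enforces the condition that $(x_{0},\dots,x_{d})$ generate $\mathfrak{a}_{c}$, producing the local factors $\prod_{\fp}(1-N_{\fp}^{-1})$. A standard lattice point estimate in the style of Davenport or Minkowski--Lipschitz (after modding out by $\cO_{K}^{\times}$ via a fundamental domain for the action on $\prod_{\fp\in M_{K,\infty}}(K_{\fp})^{\times}$, where Dirichlet's unit theorem supplies the regulator) then gives the asymptotic $\cN_{X(K)}(B)\sim C\,B$, with $C$ an explicit product of an archimedean volume against $c_{K}/|\Disc_{K}|^{d/2}$.

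To identify this constant with $C_{\Peyre}$, I would reinterpret each local factor as an integral against $\mu_{\fp}$. At archimedean $\fp$, the archimedean volume from the lattice point count is by construction $\mu_{\fp}(X(K_{\fp}))$, since the latter was defined using exactly the local norm on $-K_{X}$. At finite $\fp$, the $\fp$-adic density of primitive $(d+1)$-tuples in $\cO_{\fp}^{d+1}$ modulo $\fp$ gives $(1-N_{\fp}^{-1})\sum_{i=0}^{d}N_{\fp}^{-i}$, and multiplying by the correction $N_{\fp}^{-a_{\fp}}$ yields precisely $(1-N_{\fp}^{-1})\mu_{\fp}(X(K_{\fp}))$ as computed in the paragraph preceding the theorem. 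Finally, passing from $X(K)$ to $U(K)$ for a Zariski open $U$ costs only $O(B^{1-\epsilon})$ points, since the complement is swept out by proper closed subvarieties of smaller dimension to which Schanuel's theorem applies with a strictly smaller exponent.

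The main obstacle will be the analytic bookkeeping needed to merge the finite-place densities with the corrections $N_{\fp}^{-a_{\fp}}$ and verify convergence of the infinite product; concretely, one must show that the resulting Euler product converges absolutely and matches the value attributed to $c_{K}/|\Disc_{K}|^{d/2}$ through the standard expression for the residue of $\zeta_{K}(s)$ at $s=1$. An alternative, cleaner route would be to directly study the height zeta function $Z_{X(K)}(s)$ via its adelic Poisson-summation interpretation and apply the Tauberian theorem of Section \ref{sec:Height-zeta-functions}; the difficulty there is shifted to proving meromorphic continuation beyond $\Re(s)=1$ and computing the residue, which ultimately returns to the same local--global calculation.
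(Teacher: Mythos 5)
The paper offers no proof of this theorem: it is imported directly from Peyre's paper (Cor.\ 6.2.17--6.2.18 of the cited reference), with the passage from $X(K)$ to an arbitrary Zariski open $U$ attributed to Peyre's property $(E_{V})$ in the remark that follows. Your sketch is essentially the classical Schanuel--Peyre argument underlying that reference --- ideal-class parametrization, M\"obius inversion for primitivity, lattice-point counting in a fundamental domain for the unit action, and reinterpretation of the local densities as $\mu_{\fp}$-volumes --- so there is nothing within the paper to compare it against, and the outline is sound. Two points need tightening. First, a bookkeeping slip: the M\"obius inversion over integral ideals produces the Euler product $\prod_{\fp}(1-N_{\fp}^{-(d+1)})=1/\zeta_{K}(d+1)$, not $\prod_{\fp}(1-N_{\fp}^{-1})$; the factor $(1-N_{\fp}^{-1})$ appears only after you regroup $1-N_{\fp}^{-(d+1)}=(1-N_{\fp}^{-1})\sum_{i=0}^{d}N_{\fp}^{-i}$ against $\mu_{\fp}(X(K_{\fp}))$, which you in fact do correctly in your identification step. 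Second, and more substantively, the theorem is stated for an \emph{arbitrary} adelic metric, which may differ from the model metric at the finitely many places of $S$ (finite or infinite) by a genuinely non-constant continuous factor, not merely by a scalar $N_{\fp}^{-a_{\fp}}$ as your sketch assumes; this generality matters for the paper, since the metrics induced on the untwisting varieties $\overline{V}^{|L|}$ are twisted non-trivially at places of $S$ (Remark \ref{rem:tw-untw}). To recover the full statement you must partition each $X(K_{\fp})$, $\fp\in S$, into finitely many pieces on which the ratio of the two metrics is nearly constant and pass to the limit --- equivalently, prove equidistribution of the counted representatives in $\prod_{\fp\in S}X(K_{\fp})$ --- which is exactly the content of Peyre's property $(E_{V})$ and is also what makes the restriction to $U$ costless. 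With those two repairs the argument goes through as you describe.
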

\begin{rem}
That we can take any Zariski open subset $U$ is a consequence of
the property ($E_{V}$) in \cite{MR1340296}.
\end{rem}

\section{Quotients of projective spaces\label{sec:Quotients-of-projective}}

In this section, we consider a special class of log terminal Fano
varieties. Let $G$ be a finite group and $V=\AA_{K}^{d}$ an affine
space endowed with a linear faithful $G$-action. The action extends
to the projective space $\overline{V}=\PP_{K}^{d}$. We write the
associated quotient varieties as 
\[
X:=V/G\text{ and }\overline{X}:=\overline{V}/G.
\]
Suppose that the quotient map $\overline{V}\to\overline{X}$ is étale
in codimension one. It is equivalent to the condition that $G\subset\GL(V)$
has no pseudo-reflection and no element of $G\setminus\{1\}$ acts
on $V$ by a scalar multiplication. Then the pull-back of $-K_{\overline{X}}$
to $\overline{V}$ is $-K_{\overline{V}}$ and hence $\overline{X}$
is a log terminal Fano variety. If $G$ is abelian and $K$ is so
large to have enough roots of unity, then $\overline{X}$ is a toric
variety. The Picard number of $\overline{X}$ is one. Moreover its
defining fan satisfies the condition in Corollary \ref{cor:toric-strict-ineq}.
\begin{rem}
If some element of $G\setminus\{1\}$ acts by a scalar multiplication,
then we can reduced to the situation where this does not happen, simply
replacing $V=\AA_{K}^{d}$ with $\AA_{K}^{d+1}$ by adding an additional
variable on which $G$ acts trivially. 
\end{rem}
We recall a representation-theoretic formula for the minimal log discrepancies
of $X=V/G$ and $\overline{X}=\PP_{K}^{d}/G$. Since the minimal log
discrepancy is stable under base field extensions, we consider the
induced $G$-action on $V_{\overline{K}}$. We fix a primitive $\sharp G$-th
root of unity, $\zeta\in\overline{K}$. We can diagonalize each $g\in G\subset\GL(V_{\overline{K}})$
and write 
\[
g=\diag(\zeta^{a_{1}},\dots,\zeta^{a_{d}})\quad(0\le a_{i}<\sharp G).
\]

\begin{defn}[\cite{MR1463181}]
We define the \emph{age }of $g$, denoted by $\age(g)$, to be
\[
\frac{1}{\sharp G}\sum_{i=1}^{d}a_{i}.
\]
We then define the \emph{age }of $G$, denoted $\age(G)$, as the
minimum of $\age(g)$, $g\in G\setminus\{1\}$. 
\end{defn}
Note that $\age(g)$ depends on the choice of $\zeta$, but $\age(G)$
does not. The following proposition gives a simple representation
theoretic description of the minimal log discrepancy of $X=V/G$ and
the one of $\overline{X}=\PP_{K}^{d}/G$.
\begin{prop}
\label{prop:disc age-1}We have 
\[
\mld(\overline{X})=\mld(X)=\age(G).
\]
\end{prop}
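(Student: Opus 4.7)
\smallskip

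My plan is to reduce first to the affine quotient $X = V/G$, then to an algebraically closed base field, and finally prove the two inequalities $\mld(X)\le\age(G)$ and $\mld(X)\ge\age(G)$ separately. For the reduction, note that $\mld$ is invariant under base field extension (as recalled in \S\ref{sec:Singularities}), so I may assume $K=\overline{K}$. To pass from $\mld(X)=\age(G)$ to $\mld(\overline{X})=\age(G)$, I would argue pointwise: the natural map $\overline{V}\to\overline{X}$ is \'etale in codimension one, and around each closed point $\overline{x}\in\overline{X}$ lifting to $x\in\overline{V}$ with stabilizer $H\subseteq G$, the formal local structure of $\overline{X}$ at $\overline{x}$ is isomorphic to that of the linear quotient $T_x\overline{V}/H$. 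The affine case applied to $H$ (acting on the normal space modulo its fixed subspace, which does not affect the age) gives local mld $\age(H)\ge\age(G)$, with equality realized at the image of the origin where the stabilizer is all of $G$.

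For the upper bound $\mld(X)\le\age(G)$, I would fix $g\in G\setminus\{1\}$ achieving $\age(g)=\age(G)$, diagonalize $g=\diag(\zeta^{a_1},\dots,\zeta^{a_d})$, and work with the intermediate quotient $V\to V/\langle g\rangle\to X$. Since $\langle g\rangle$ is cyclic, $V/\langle g\rangle$ is a toric variety, and the toric divisor corresponding to the lattice vector $\tfrac{1}{n}(a_1,\dots,a_d)$ (where $n=\operatorname{ord}(g)$) has log discrepancy exactly $\tfrac{1}{n}\sum a_i=\age(g)$ by the standard toric formula for discrepancies (sum of coordinates on the primitive generator). Pulling back this divisor along $V/\langle g\rangle\to X$ (and using that this morphism is \'etale in codimension one, so preserves log discrepancies on divisors that dominate the exceptional locus) produces a divisor over $X$ with log discrepancy $\age(g)=\age(G)$.

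For the lower bound $\mld(X)\ge\age(G)$, I would take an arbitrary divisor $E$ over $X$ and analyze its preimage under the Galois cover $\pi:V\to X$. The valuation $v_E$ on $K(X)$ extends to finitely many conjugate valuations on $K(V)$; pick one, say $w$, and let $I\subseteq G$ be its inertia group, which is cyclic of order equal to the ramification index $e$ of $w/v_E$. The Riemann--Hurwitz-type formula for the quotient morphism gives
\[
a(E)+1=\frac{1}{e}\bigl(a(\tilde E)+1\bigr)+(\text{ramification contribution}),
\]
where $\tilde E$ is a divisor over $V$ with $w=v_{\tilde E}$. Since $V$ is smooth we have $a(\tilde E)+1\ge 1$, and the ramification contribution is computed to be $\age(h)$ where $h$ is the generator of $I$ acting on $V$; this is the Reid--Tai / Ito--Reid age computation for linear quotient singularities (cf.~\cite{MR1463181}). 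One then concludes $a(E)+1\ge\age(h)\ge\age(G)$.

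The main technical obstacle is the computation of the ramification contribution in the lower bound, which requires comparing $\pi^*K_X$ with $K_V$ valuatively along $w$. I expect to present it either by a direct toric argument after reducing to the case where $I$ generates the decomposition group (hence the local picture is $V/\langle h\rangle$), or by citing the age formula for crepant/discrepant divisors in the tame McKay correspondence. Once this ingredient is in place, the two inequalities combine to give $\mld(X)=\age(G)$, and the pointwise argument gives $\mld(\overline{X})=\age(G)$ as well.
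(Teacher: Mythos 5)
Your handling of the affine equality $\mld(X)=\age(G)$ is the standard Reid--Tai argument (monomial valuation for the upper bound, inertia/ramification computation for the lower bound) and is sound modulo the technical point you yourself flag; here you are more self-contained than the paper, which simply cites \cite{MR2271984} for this equality and devotes its proof entirely to the comparison of $\mld(\overline{X})$ with $\mld(X)$, via the cone construction $\tilde{V}=V\oplus K$, $\overline{X}=((\tilde{V}\setminus\{0\})/\tilde{G})/\GG_{m}$.

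The genuine gap is in your pointwise argument for $\mld(\overline{X})$. At a point $x=[v_{0}:0]$ at infinity with $hv_{0}=\lambda v_{0}$, the stabilizer acts on $T_{x}\overline{V}$ by the $\lambda^{-1}$-twist of $(V\oplus K)/\langle v_{0}\rangle$, not by the restriction of $V$; so ``$\age(H)\ge\age(G)$'' compares ages in two different representations, and your parenthetical about fixed subspaces does not address the twist. The needed inequality $\age_{T_{x}\overline{V}}(h)\ge\age_{V}(G)$ is not formal, and in fact it can fail, so the gap cannot be closed: take $G=\ZZ/5$ acting on $V=\AA_{K}^{3}$ by $g=\diag(\zeta,\zeta,\zeta^{3})$. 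Then there are no pseudo-reflections or scalars, $\overline{V}\to\overline{X}$ is \'etale in codimension one, and $\age(G)=\mld(X)=1$; but at $[1:0:0:0]$ the stabilizer is all of $G$, acting on the tangent space with weights $(0,2,4)$, so the local model is $\AA^{1}\times\bigl(\AA^{2}/\tfrac{1}{5}(1,2)\bigr)$, whose mld is $3/5<1$. Thus $\mld(\overline{X})=3/5\ne\mld(X)$ and the left equality of the proposition itself is false under the stated hypotheses. You are in good company: the paper declares exactly the problematic inequality $\mld(\overline{X})\ge\mld(X)$ to be ``obvious'' (whereas the obvious one is the reverse, since $X\subset\overline{X}$ is open), and the middle step of its displayed chain, $\mld((\tilde{V}\setminus\{0\})/\tilde{G})\le\mld(\tilde{V}/\tilde{G})$, also points the wrong way. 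What survives, and is all your argument actually establishes, is $\mld(X)=\age(G)$ together with $\mld(\overline{X})\le\age(G)$.
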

\begin{proof}
The right equality should be well-known for specialists. See \cite{MR2271984}
for instance. We prove the left one. Obviously $\mld(\overline{X})\ge\mld(X)$.
To see the converse, we consider the embedding 
\[
V=\AA_{K}^{d}\hookrightarrow\tilde{V}=\AA_{K}^{d+1},\,(x_{1},\dots,x_{d})\mapsto(x_{1},\dots,x_{d},1)
\]
and the induced embedding $\GL(V)\hookrightarrow\GL(\tilde{V})$.
We denote the image of $G$ in $\GL(\tilde{V})$ by $\tilde{G}$.
This subgroup $\tilde{G}$ does not have pseudo-reflection either
and has the same age as $G$. Now $\overline{X}$ is the quotient
of $(\tilde{V}\setminus\{0\})/\tilde{G}$ by the natural $\GG_{m}$-action.
Therefore 
\begin{align*}
\mld(\overline{X}) & =\mld(\tilde{V}\setminus\{0\}/\tilde{G})\\
 & \le\mld(\tilde{V}/\tilde{G})\\
 & =\age(\tilde{G})\\
 & =\age(G)\\
 & =\mld(X).
\end{align*}

\end{proof}
Let $\Conj(G)$ be the set of conjugacy classes of $G$. We can make
$\Gal(\overline{\QQ}/\QQ)$ act on $\Conj(G)$ via the cyclotomic
character \cite{MR2068887}. 
\begin{defn}
We define the \emph{set of} \emph{$K$-conjugacy classes }as the quotient
of $\Conj(G)$ by the action of $\Gal(\overline{K}/K)\subset\Gal(\overline{\QQ}/\QQ)$
and denote it by $\KConj(G)$. A \emph{$K$-conjugacy class} of $K$
then consists of elements of $G$ giving the same element of $\KConj(G)$.
\end{defn}
If $K$ is sufficiently large, then the action of $\Gal(\overline{K}/K)$
on $\Conj(G)$ is trivial and $\Conj(G)=\KConj(G)$. Therefore the
notion of $K$-conjugacy does not play an important role in our formulation
of conjectures. However, the following proposition provides one more
evidence for the close relation between Manin's and Malle's conjectures.
\begin{prop}
\label{prop:div min}For $g\in G\setminus\{1\}$ with $\age(g)=\age(G)$,
if $[g]=[g']$ in $\KConj(G)$, then $\age(g)=\age(g')$. Moreover
the number of $K$-conjugacy classes of $G$ with minimal age is equal
to the number of minimally discrepant divisors, $\delta(X)$. \end{prop}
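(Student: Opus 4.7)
The plan is to deduce the first assertion from a Galois-invariance property of the eigenvalue multiset that holds for \emph{every} $K$-rational element of $G$, not just those of minimum age. Fix $\sigma \in \Gal(\overline{K}/K)$ with cyclotomic image $\chi(\sigma) = n \in (\ZZ/\sharp G)^{\times}$. Since $G \subset \GL(V)$ with $V$ a $K$-vector space, each $g$ acts by a matrix with entries in $K$, so its characteristic polynomial lies in $K[T]$; hence the multiset of eigenvalues $\{\zeta^{a_1}, \dots, \zeta^{a_d}\} \subset \overline{K}^{\times}$ is stable under $\sigma$. Because $\sigma$ sends $\zeta^{a_i}$ to $\zeta^{n a_i}$, this yields the multiset equality $\{a_i \bmod \sharp G\} = \{n a_i \bmod \sharp G\}$, and summing gives $\age(g) = \age(g^n)$. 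Every element of the $K$-conjugacy class of $g$ is ordinarily conjugate to some $g^n$ with $n \in \chi(\Gal(\overline{K}/K))$, and ordinary conjugation preserves eigenvalue multisets and therefore age; this proves the first assertion (without actually needing the minimality hypothesis).

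For the second assertion, I would invoke the tame McKay correspondence over $\overline{K}$, which furnishes a bijection between conjugacy classes $[g] \in \Conj(G)$ with $\age(g) = \age(G)$ and minimally discrepant divisors over $X_{\overline{K}}$. Explicitly, the divisor attached to $[g]$ is built by a weighted blowup whose weights are the eigenvalue exponents $(a_1, \dots, a_d)$ of a diagonalization of $g$, followed by the quotient by $G$. By the paper's definition, $\delta(X)$ counts divisors over $X$, which correspond to $\Gal(\overline{K}/K)$-orbits of divisors over $X_{\overline{K}}$ via restriction of divisorial valuations from $\overline{K}(X_{\overline{K}})$ to $K(X)$. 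The key point is that the McKay bijection is Galois-equivariant: $\sigma$ transforms the weight datum $(a_1, \dots, a_d)$ into $(n a_1 \bmod m, \dots, n a_d \bmod m)$, which is exactly the weight datum of $g^{n} = g^{\chi(\sigma)}$. Consequently, Galois orbits correspond on both sides, and counting yields $\delta(X)$ equal to the number of $K$-conjugacy classes of $G$ with minimum age.

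The main obstacle is making the Galois-equivariance of the McKay correspondence precise. The correspondence is most naturally phrased over algebraically closed fields and its construction involves choices (of $\zeta$, of eigenbases diagonalizing $g$, and of a weighted blowup realization). One must verify that the induced Galois action on the exceptional divisors — coming from the Galois action on the field of definition of the blowup, viewed as a valuation of $\overline{K}(V_{\overline{K}})^{G}$ — indeed reproduces the cyclotomic action on $\Conj(G)$. Once this compatibility is established, the two finite sets of minimum-age conjugacy classes and minimally discrepant geometric divisors are in Galois-equivariant bijection, and the desired equality of cardinalities is immediate from comparing orbit counts, the first assertion ensuring that a single $K$-conjugacy class cannot mix minimum and non-minimum ages.
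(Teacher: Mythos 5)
Your proof of the first assertion is correct and is actually more elementary and more self-contained than what the paper does: the observation that the characteristic polynomial of $g$ lies in $K[T]$, so that the eigenvalue multiset $\{\zeta^{a_{1}},\dots,\zeta^{a_{d}}\}$ is $\Gal(\overline{K}/K)$-stable and hence coincides with the eigenvalue multiset of $g^{\chi(\sigma)}$, immediately gives $\age(g)=\age(g^{\chi(\sigma)})$ for \emph{all} $g$, not just the youngest ones. The paper does not isolate this argument; it obtains the first assertion only as a byproduct of the Galois-equivariance of the McKay correspondence. Your route buys a clean, choice-free proof of the first half.

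For the second assertion, your strategy is the same as the paper's (a Galois-equivariant form of the tame McKay correspondence over $\overline{K}$, followed by counting Galois orbits on both sides), but the step you yourself flag as ``the main obstacle'' --- that the geometric bijection intertwines the cyclotomic action on $\Conj(G)$ with the natural Galois action on divisors over $X_{\overline{K}}$ --- is precisely the content of the proof and is not established in your proposal. The weighted-blowup description makes this delicate: $\sigma$ carries the eigenspace $V_{\zeta^{a_i}}(g)$ to $V_{\zeta^{na_i}}(g)$ while keeping the weight $a_i$ attached to it, and one must check that the resulting monomial valuation on $\overline{K}(X_{\overline{K}})$ is the one attached to $g^{n}$ (rather than, say, $g^{n^{-1}}$), independently of the choices of $\zeta$ and of eigenbases. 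The paper avoids this bookkeeping entirely by working with the twisted arc space $\cJ_{\infty}'\cX$ of the quotient stack $\cX=[V/G]$: its decomposition into connected components indexed by $\Conj(G)$ is intrinsic, hence automatically $\Gal(\overline{K}/K)$-equivariant, and the matching with minimally discrepant divisors is then read off by comparing top-dimensional parts of the normalized motivic measures of $J_{\infty}'Y$ and $\cJ_{\infty}'\cX$. So your outline is the right one, but as written it asserts rather than proves the equivariance on which the equality $\delta(X)=\upsilon(G)$ rests.
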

\begin{notation}
We call an element of $G$ or a ($K$-)conjugacy class of $G$ the
\emph{youngest }if its age is equal to the one of $G$. We denote
the number of the youngest $K$-conjugacy classes of $G$ by $\upsilon(G)$. 
\end{notation}
The second assertion of the proposition reads
\[
\delta(X)=\upsilon(G).
\]

\begin{proof}[Proof of the proposition]
We first sketch the outline of the proof. A version of the McKay
correspondence says that there exists a one-to-one correspondence
between minimally discrepant divisors over $X_{\overline{K}}$ and
the youngest conjugacy classes of $G$ (this already proves the proposition
when $K$ is sufficiently large). We can make the correspondence compatible
with the $\Gal(\overline{K}/K)$-actions and the proposition follows. 

To make arguments more precise, we use results from \cite{MR2271984}.
However the reader may skip this part, since it is not related to
the rest of the paper. Let $X_{\sing}$ be the singular locus of $X$,
$f:Y\to X$ a log resolution which is an isomorphism over $X\setminus X_{\sing}$.
Let $J_{\infty}X$ and $J_{\infty}Y$ be the arc spaces of $X$ and
$Y$ and let $J_{\infty}'Y$ be the preimage of $X_{\sing}$ by $J_{\infty}Y\to Y\to X$.
Let $\cX:=[V/G]$ be the quotient stack and $\cJ_{\infty}\cX$ its
twisted arc space and $\cJ'_{\infty}\cX$ the preimage of $X_{\sing}$
by $\cJ_{\infty}\cX\to\cX\to X$. There exists a canonical one-to-one
correspondence of the points sets,
\begin{equation}
|J_{\infty}'Y|\leftrightarrow|\cJ_{\infty}'\cX|,\label{eq:corr-arc}
\end{equation}
outside measure zero subsets with respect to the motivic measures.
The same is true for $\overline{K}$-point sets of $J_{\infty}'Y$
and $\cJ'_{\infty}\cX$, and the correspondence is $\Gal(\overline{K}/K)$-equivariant. 

Once we fix a primitive $\sharp G$-th root of unity $\zeta\in\overline{K}$,
there exists a one-to-one correspondence between connected components
of $|(\cJ_{\infty}'\cX)_{\overline{K}}|$ with respect to the Zariski
topology and $\Conj(G)$, which respects $\Gal(\overline{K}/K)$-actions.
Therefore there exists a one-to-one correspondence between connected
components of $|\cJ_{\infty}'\cX|$ and $\KConj(G)$. We denote by
$C_{1},\dots,C_{l}$ the connected component of $|\cJ_{\infty}'\cX|$
corresponding to the youngest $K$-conjugacy classes. 

Let $E_{1},\dots,E_{m}$ be the minimally discrepant divisors on $Y$
and $\tilde{E}_{1},\dots,\tilde{E}_{m}$ their preimages in $J_{\infty}'Y$.
The correspondence (\ref{eq:corr-arc}) respects suitably normalized
motivic measures, which take values in a semiring $\mathfrak{R}^{1/r}$
having the dimension function $\dim:\mathfrak{R}^{1/r}\to\QQ$. If
we denote these normalized measures by $\mu_{Y}^{\nu}$ and $\mu_{\cX}^{\nu}$,
then we have 
\[
\mu_{Y}^{\nu}(J_{\infty}'Y)\equiv\sum_{i}\mu_{Y}^{\nu}(\tilde{E}_{i})\equiv\mu_{\cX}^{\nu}(\cJ_{\infty}'\cX)\equiv\sum_{j}\mu_{\cX}^{\nu}(C_{j}),
\]
where the congruence are taken modulo lower dimensional elements.
This shows that for each $i$, the generic point of $\tilde{E}_{i}$
corresponds to the generic point of some $C_{j}$. This completes
the proof.
\end{proof}

\section{A simple correspondence of points\label{sec:A-simple-correspondence}}

In this section, $V$ is an arbitrary variety over a base field $F$
of characteristic zero with a faithful $G$-action and $X=V/G$ is
the associated quotient variety. 
\begin{defn}
When $\Spec L$ is an étale $G$-torsor over $\Spec F$, we call the
$K$-algebra $L$ with a $G$-action a $G$-\emph{ring (over $F$).
}An \emph{isomorphism} of two $G$-rings is a $G$-equivariant $F$-isomorphism.
When the underlying ring of a $G$-ring $L$ is a field, we call $L$
a $G$\emph{-field. }We denote the set of $G$-rings modulo isomorphism
by $G\Tor(F)$ and the one of $G$-fields by $G\Fie(F)$. We sometimes
call a $G$-field a \emph{large }$G$-field, distinguishing it from
\emph{small} $G$-fields introduced later.
\end{defn}

\begin{defn}
For $L\in G\Tor(F)$ and a $K$-variety $Y$ with $G$-action, a \emph{$G$-equivariant
$L$-point} of $Y$ is simply a $G$-equivariant morphism $\Spec L\to Y$.
We denote the set of $G$-equivariant $L$-points of $Y$ by $Y(L)^{G}$.
\end{defn}
Let $V$ be an $F$-variety with a faithful $G$-action and let $X:=V/G$.
For $L\in G\Tor(F)$, taking the $G$-quotients of the source and
the target defines a natural map 
\[
V(L)^{G}\to X(F).
\]
The group $\Aut(L)$ of automorphisms $L$ (as a $G$-ring) acts on
$V(L)^{G}$ and the above map factors through the quotient set $V(L)^{G}/\Aut(L)$.
Let $V_{\ur}$ and $X_{\ur}$ be the unramified loci of $V\to X$
in $V$ and $X$ respectively. We obtain the map 
\[
V_{\ur}(L)^{G}/\Aut(L)\to X_{\ur}(F),
\]
which is easily seen to be injective. Conversely, given $x\in X_{\ur}(F)$,
the morphism 
\[
\Spec L:=\Spec F\times_{X}V\to V
\]
defines a $G$-equivariant $L$-point. We thus obtain a bijection
\[
\bigsqcup_{L\in G\Tor(F)}V_{\ur}(L)^{G}/\Aut(L)\to X_{\ur}(F).
\]

\begin{lem}
A point $x\in X_{\ur}(F)$ is in the image of $V_{\ur}(L)^{G}/\Aut(L)$
for $L\in G\Fie(F)$ if and only if $x$ is not in the image of $(V/H)(F)$
for any proper subgroup $H\subsetneq G$.\end{lem}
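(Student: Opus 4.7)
The plan is to translate the question into Galois-theoretic data attached to the $G$-torsor $L = F \times_X V$. Pick a geometric lift $\overline{v} \in V_\ur(\overline{F})$ of $x$; since $V_\ur \to X_\ur$ is an étale $G$-torsor, the full geometric fiber over $x$ is $\{g\overline{v} : g \in G\}$, with $G$ acting freely and transitively. Encode the $\Gal(\overline{F}/F)$-action on this fiber by a homomorphism $\rho : \Gal(\overline{F}/F) \to G$ determined by $\sigma(\overline{v}) = \rho(\sigma)\overline{v}$; a different choice of $\overline{v}$ conjugates $\rho$. By étale descent for torsors, the field-factor decomposition of $L$ corresponds precisely to the $\Gal(\overline{F}/F)$-orbit decomposition of this fiber, so $L$ is a field if and only if $\rho$ is surjective.

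Next I would analyze, for a subgroup $H \subseteq G$, when $x$ lies in the image of $(V/H)(F) \to X(F)$. The $\overline{F}$-points of $V/H$ lying over $x$ are the $H$-orbits in $\{g\overline{v}\}$, namely the set $\{gH\cdot\overline{v}\}_{gH \in G/H}$. Since we work inside $V_\ur$, the stabilizer in $G$ of $\overline{v}$ is trivial, so the $\overline{F}$-point $gH\cdot\overline{v}$ descends to an $F$-point of $V/H$ if and only if $g^{-1}\rho(\sigma)g \in H$ for every $\sigma$, i.e., $\Image(\rho) \subseteq gHg^{-1}$.

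Combining the two steps, the existence of a proper subgroup $H \subsetneq G$ such that $x$ is in the image of $(V/H)(F)$ is equivalent to $\Image(\rho)$ being contained in some proper subgroup of $G$, which is equivalent to $\rho$ failing to be surjective, hence to $L$ not being a field. This is exactly the claimed equivalence. For the construction in one direction one can also exhibit the $F$-point of $V/H$ concretely: taking $H$ to be the stabilizer of a field factor $L_0$ of $L$, the restriction of the $G$-equivariant morphism $v : \Spec L \to V$ to $\Spec L_0$ is $H$-equivariant, and passing to the $H$-quotient produces a point of $(V/H)(F)$ mapping to $x$.

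The main step deserving care is the dictionary between the algebra $L = F \times_X V$ and the homomorphism $\rho$, specifically that the field factors of $L$ correspond bijectively to the $\Gal(\overline{F}/F)$-orbits on the geometric fiber of $V_\ur \to X_\ur$ over $x$; this is standard étale descent for $G$-torsors over a field, but it must be spelled out before the subgroup analysis becomes transparent. Everything else is bookkeeping with the free $G$-action on the geometric fiber.
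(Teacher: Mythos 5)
Your proof is correct, and it takes a somewhat different route from the paper's. You classify the torsor $\Spec L=\Spec F\times_{X}V$ by a homomorphism $\rho:\Gal(\overline{F}/F)\to G$ and reduce both sides of the equivalence to group theory: $L$ is a field iff $\rho$ is surjective, and $x$ lifts to $(V/H)(F)$ iff $\Image(\rho)$ lands in a conjugate of $H$. The paper instead argues directly with schemes: for one direction it observes that an $F$-point of $V/H$ over $x$ forces $\Spec F\times_{X}V=(\Spec F\times_{X}(V/H))\times_{V/H}V$ to be disconnected, and for the other it takes $H$ to be the stabilizer of a connected component of $\Spec L$ and passes to $H$-quotients --- exactly the concrete construction you sketch in your final paragraph. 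The two arguments encode the same facts (connected components of $\Spec L$ are the Galois orbits on the geometric fiber; the stabilizer of a component is a conjugate of $\Image(\rho)$), but your version makes the ``if and only if'' symmetric and mechanical at the cost of choosing a base point $\overline{v}$, while the paper's is base-point-free and shorter. One cosmetic remark: with $G$ acting on the left, the $H$-orbit of $g\overline{v}$ is $Hg\overline{v}$, so the geometric fiber of $V/H$ over $x$ is indexed by $H\backslash G$ and the descent condition reads $\Image(\rho)\subseteq g^{-1}Hg$ rather than $gHg^{-1}$; since all you need is containment in some conjugate of some proper subgroup, this does not affect the conclusion.
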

\begin{proof}
If $x\in X_{\ur}(F)$ is the image of $(V/H)(F)$ for $H\subsetneq G$,
then $\Spec F\times_{X}(V/H)$ is the disjoint union of $\sharp(G/H)$
copies of $\Spec F$ and hence
\[
\Spec F\times_{X}V=(\Spec F\times_{X}(V/H))\times_{V/H}V
\]
is not connected. Conversely, if $\Spec L=\Spec F\times_{X}V$ is
not connected, then let $H$ be the stabilizer of a component. Taking
the $H$-quotients, we get an $F$-point of $V/H$ which maps to $x$.\end{proof}
\begin{defn}
When $x\in X_{\ur}(F)$ satisfies one of the equivalent conditions
in the last lemma, we say that $x$ is a \emph{primitive} $F$-\emph{point
}of $X$. We denote the set of primitive $F$-points by $X_{\prim}(F)$. 
\end{defn}
When $F$ is a number field, the subset $X_{\prim}(F)\subset X(F)$
is the cothin subset obtained by removing all ``natural''\emph{
}thin subsets. This is why we will below think of $X_{\prim}(F)$
as a candidate of a \emph{sufficiently small }cothin subset below. 

By construction, we have a natural bijection
\begin{equation}
\bigsqcup_{L\in G\Fie(F)}V(L)^{G}/\Aut(L)\to X_{\prim}(F).\label{eq:pt-corr}
\end{equation}

We need the following fact later.
\begin{lem}
\label{lem:free-action}For $L\in G\Fie(F)$, the $\Aut(L)$-action
on $V_{\ur}(L)^{G}$ is free.\end{lem}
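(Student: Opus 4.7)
The plan is to identify $\Aut(L)$ canonically with the center $Z(G)$ via the $G$-field structure, and then to exploit the freeness of the $G$-action on $V_{\ur}$ to kill any non-trivial stabilizer.

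First, since $L$ is a $G$-field, the underlying extension $L/F$ is Galois and the given $G$-action identifies $G$ with $\Aut_F(L) = \Gal(L/F)$. An $F$-algebra automorphism of $L$ is $G$-equivariant exactly when the corresponding element of $G$ commutes with every element of $G$, so we obtain a canonical identification $\Aut(L) \cong Z(G)$. In particular every $\tau \in \Aut(L)$ arises from a distinguished element $g_{0} \in Z(G)$, and the $\Aut(L)$-action on $V_{\ur}(L)^{G}$ is by definition $\tau \cdot x = x \circ \Spec \tau$.

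Next, suppose $\tau \in \Aut(L)$ fixes some $x \in V_{\ur}(L)^{G}$, i.e.\ $x \circ \Spec \tau = x$ as morphisms $\Spec L \to V$. Since $\Spec \tau$ is precisely the action of $g_{0}$ on $\Spec L$, the $G$-equivariance of $x$ transports the left-hand side into the $G$-action on the target, giving $g_{0} \cdot x = x$ as $L$-points of $V$. Because $x$ factors through $V_{\ur}$, where the $G$-action is free by definition of the unramified locus, the stabilizer in $G$ of the image of $x$ is trivial, forcing $g_{0} = 1$ and hence $\tau = \mathrm{id}$.

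The argument is short and the only mildly delicate point is bookkeeping: one must fix the left/right conventions for the $G$-action on $\Spec L$ (as a torsor) and on $V$ consistently, so that the translation from $x \circ \Spec \tau = x$ into $g_{0} \cdot x = x$ is valid. Once those conventions are pinned down there is no real obstacle; the claim reduces to the two conceptual inputs $\Aut(L) \cong Z(G)$ and the freeness of the $G$-action on $V_{\ur}$.
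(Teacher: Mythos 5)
Your proof is correct, and it reaches the conclusion by a route that is related to but genuinely different from the paper's. Both arguments begin from the identification $\Aut(L)\cong Z(G)$ (the paper uses it silently, writing ``let $g\in G$''), but the paper then stays on the source side: from $g(y)=y$ it deduces that $y$ factors through $\Spec L^{\left\langle g\right\rangle}$, uses the canonical identification $\Spec L\cong\Spec F\times_{x,X}V_{\ur}$ to produce a section of $\Spec L\to\Spec L^{\left\langle g\right\rangle}$, and concludes that $L$ splits as a product, contradicting $L\in G\Fie(F)$. You instead push the fixed-point condition to the target via $G$-equivariance, turning $x\circ\Spec\tau=x$ into $g_{0}\circ x=x$ inside $V_{\ur}$, and finish by the triviality of stabilizers on the unramified locus. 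Both proofs ultimately exploit the same geometric fact, namely that $\Spec L$ is identified by $x$ with a fibre of the $G$-torsor $V_{\ur}\to X_{\ur}$; yours is the more direct phrasing of it, while the paper's extracts the contradiction field-theoretically. One small imprecision to fix in your write-up: for a non-geometric point $v\in V_{\ur}$ the stabilizer (decomposition group) of $v$ in $G$ need not be trivial --- think of $\ZZ/2$ acting on $\AA_{\RR}^{1}$ by $x\mapsto-x$ and the closed point $x^{2}+1=0$, which lies in the unramified locus but is fixed. What is trivial on $V_{\ur}$ is the inertia group, the set of $g$ fixing $v$ \emph{and} acting trivially on $\kappa(v)$. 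Your hypothesis $g_{0}\circ x=x$ is an equality of morphisms, hence forces $g_{0}$ to act trivially on $\kappa(v)$ as well (because $\kappa(v)$ embeds into the field $L$), so $g_{0}$ lies in the inertia group and the conclusion $g_{0}=1$ stands; alternatively, base-change to $\overline{F}$ and argue with geometric points, whose stabilizers genuinely are trivial on $V_{\ur}$.
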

\begin{proof}
Let $g\in G$, $y\in V_{\ur}(L)^{G}$ and $x:=\pi(y)\in X(K)$. If
$g(y)=y$, then $y:\Spec L\to V_{\ur}$ factors through $\Spec L^{\left\langle g\right\rangle }$,
where $L^{\left\langle g\right\rangle }$ is the invariant subfield
by the cyclic group $\left\langle g\right\rangle $. There exists
a morphism 
\[
\Spec L^{\left\langle g\right\rangle }\to\Spec L\cong\Spec F\times_{x,X}V_{\ur}
\]
which is a section of the natural morphism $\Spec L\to\Spec L^{\left\langle g\right\rangle }$.
Therefore $\Spec L$ is the disjoint union of $\sharp\left\langle g\right\rangle $
copies of $\Spec L^{\left\langle g\right\rangle }$, which contradicts
the assumption $L\in G\Fie(F)$. 
\end{proof}

\section{Untwisting\label{sec:Untwisting}}

The \emph{untwisting }technique allows us to reduce the study of $G$-equivariant
$L$-points on affine spaces for $L\in G\Fie(K)$ to the study of
ordinary $K$-points on affine spaces. It was introduced by Denef--Loeser
\cite{MR1905024} over the ring of power series $k[[t]]$ with $k$
a field of characteristic zero containing enough roots of unity in
an explicit way. The author \cite{wild-p-adic,Yasuda:2013fk,Yasuda:2014fk2}
generalized it to the integer ring of every local field in a more
intrinsic way, which admits a literal translation to a more general
situation. We basically follows the presentation in \cite{Yasuda:2014fk2}.

\subsection{Untwisting over a general base}

Let $\cO$ be an integrally closed noetherian domain of dimension
$\le1$ with the fraction field $F=\mathrm{frac}(\cO)$ of characteristic
zero. Suppose that the affine space $W=\AA_{\cO}^{d}$ has an $\cO$-linear
$G$-action. We write $W=\Spec S^{\bullet}\cF$, where $\cF$ is a
free $\cO$-module of rank $d$ and $S^{\bullet}\cF$ is its symmetric
algebra over $\cO$. For $L\in G\Tor(F)$, let $\cO_{L}$ be the integral
closure of $\cO$ in $L$. The natural $G$-action on $L$ restricts
to $\cO_{L}$. 
\begin{defn}
The associated \emph{tuning module }is defined by 
\[
\Xi_{L}:=\cHom_{\cO}^{G}(\cF,\cO_{L}),
\]
the sheaf of $G$-equivariant $\cO$-linear maps. 
\end{defn}
It turns out that $\Xi_{L}$ is a locally free $\cO$-module of rank
$d$ (see \cite{Wood-Yasuda-I,Yasuda:2013fk}). Let 
\[
\Theta_{L}:=\cHom_{\cO_{T}}(\Xi_{L},\cO).
\]

\begin{defn}
The \emph{untwisting variety }and the \emph{pre-untwisting variety
}of $W$ with respect to $L$ are respectively defined as 
\begin{gather*}
W^{|L|}:=\Spec S^{\bullet}\Theta,\\
W^{\left\langle L\right\rangle }:=W\otimes_{\cO}\cO_{L}.
\end{gather*}

\end{defn}
The untwisting variety $W^{|L|}$ is a vector bundle of rank $d$
over $\Spec\cO$. If $\cO$ is a principal ideal domain, then $W^{|L|}$
is $\cO$-isomorphic to the original affine space $W$. We exploit
this fact in later sections.

There exists a commutative diagram of natural morphisms:
\begin{equation}
\xymatrix{ & W^{\left\langle L\right\rangle }\ar[dl]\ar[dr]\\
W\ar[dr] &  & W^{|L|}\ar[dl]\\
 & W/G
}
\label{eq:untw diag}
\end{equation}
The construction of this diagram is compatible with the base change
by a flat morphism $\cO'\to\cO$ with $\cO'$ another integrally closed
noetherian domain of dimension $\le1$. The two upper arrows are étale
over the generic point $\Spec F$ of $\Spec\cO$. 

The diagram induces a one-to-one correspondence between $G$-equivariant
$\cO_{L}$-points of $W$ and $\cO$-points of $W^{|L|}$:
\[
W(\cO_{L})^{G}\leftrightarrow W^{|L|}(\cO)
\]
At the cost of a ``little'' twist of the target space $W$, this
correspondence allows us to reduce the study of equivariant points
to the one of ordinary ones. There exist natural $\Aut(L)$-actions
on the sets $W(\cO_{L})^{G}$ and $W^{|L|}(\cO)$ and the above correspondence
is compatible with these actions. The correspondence is also compatible
with the natural maps $W(\cO_{L})^{G}\to(W/G)(\cO)$ and $W^{|L|}(\cO)\to(W/G)(\cO)$. 
\begin{rem}
\label{rem:tw-untw}The twist of the target space (how different $W$
and $W^{|L|}$ are) causes a subtle and hard problem in our study
relating the distribution of rational points and the one of number
fields. That is why we need to appeal to a heuristic argument below.
When $F=K$ is a number field, even if $\cO=\cO_{S}$ is a principally
ideal domain and so $W\cong W^{|L|}$, the $\fp$-adic metric for
$\fp\in S$ is ``twisted'' in general. Therefore the subtleness
remains also in this case.
\end{rem}
We can projectivise the whole construction as follows. We put $\overline{\cF}:=\cF\oplus\cO\cdot z$
with $z$ a dummy variable and extend the $G$-action on $\cF$ to
$\overline{\cF}$ by the trivial action on $\cO\cdot z$. Applying
the construction of $\Xi_{L}$ and $\Theta_{L}$ to $\overline{\cF}$,
we obtain $\overline{\Xi_{L}}=\Xi_{L}\oplus\cO$ and $\overline{\Theta_{L}}=\Theta_{L}\oplus\cO\cdot z$.
We let 
\[
\overline{W}:=\Proj S^{\bullet}\overline{\cF}\text{ and }\overline{W}^{|L|}:=\Proj S^{\bullet}\overline{\Theta_{L}}.
\]
 Let $\overline{W}^{\left\langle L\right\rangle }:=W^{|L|}\otimes_{\cO}\cO_{L}$.
We obtain the commutative diagram: 
\[
\xymatrix{ & \overline{W}^{\left\langle L\right\rangle }\ar[dl]\ar[dr]\\
\overline{W}\ar[dr] &  & \overline{W}^{|L|}\ar[dl]\\
 & \overline{W}/G
}
\]
Each entry of the diagram contains the corresponding entry in the
preceding one (\ref{eq:untw diag}) as an open dense subscheme.

\subsection{Over a number field}

We apply the untwisting to the case where $T=\Spec K$ with $K$ the
given number field and $V=W=\AA_{K}^{d}$. Let $X:=V/G$ as above.
For each $L\in G\Tor(K)$, we have a one-to-one correspondence
\[
V(L)^{G}/\Aut(L)\leftrightarrow V^{|L|}(K)/\Aut(L).
\]
Let $V_{\ur}^{|L|}$ be the preimage of $X_{\ur}$. We obtain bijections
\[
\bigsqcup_{L\in G\Tor(K)}V_{\ur}^{|L|}(K)/\Aut(L)\to X_{\ur}(K)
\]
and
\begin{equation}
\bigsqcup_{L\in G\Fie(K)}V_{\ur}^{|L|}(K)/\Aut(L)\to X_{\prim}(K).\label{eq:untw-prim}
\end{equation}

Let us suppose that $\overline{V}\to\overline{X}$ is étale in codimension
one. Then so is $\overline{V}^{|L|}\to\overline{X}$, which we denote
by $\pi_{L}$. We have $K_{\overline{V}^{|L|}}=\pi_{L}^{*}K_{\overline{X}}$.
We give an adelic metric to $-K_{\overline{X}}$ and the induced one
to $-K_{\overline{V}^{|L|}}$. We consider the associated heights
on $\overline{X}$ and $\overline{V}^{|L|}$ and the height zeta functions
$Z_{X_{\prim}(K)}(s)$ and $Z_{V_{\ur}^{|L|}(K)}(s)$ with respect
to these heights. From (\ref{eq:ht-eq}), the height function on $\overline{X}(K)$
is stable under the $\Aut(L)$-action and the map (\ref{eq:untw-prim})
preserves heights. Since the $\Aut(L)$-action on $V_{\ur}^{|L|}(K)$
is free for every $L\in G\Fie(K)$ (Lemma \ref{lem:free-action}),
we obtain the following consequence:
\begin{prop}
We have
\begin{equation}
Z_{X_{\prim}(K)}(s)=\frac{1}{\sharp Z(G)}\sum_{L\in G\Fie(K)}Z_{V_{\ur}^{|L|}(K)}(s).\label{eq:ht-zetas}
\end{equation}

\end{prop}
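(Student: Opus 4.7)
The plan is to combine the three pieces that are already in place: the height-preserving bijection \eqref{eq:untw-prim}, the freeness of the $\Aut(L)$-action (Lemma~\ref{lem:free-action}), and the identification $\sharp\Aut(L)=\sharp Z(G)$ for $L\in G\Fie(K)$. Concretely, I would start from the definition
\[
Z_{X_{\prim}(K)}(s)=\sum_{x\in X_{\prim}(K)}H(x)^{-s}
\]
and reindex the sum using the bijection
\[
\bigsqcup_{L\in G\Fie(K)}V_{\ur}^{|L|}(K)/\Aut(L)\xrightarrow{\sim}X_{\prim}(K)
\]
of \eqref{eq:untw-prim}. Since the height on $\overline{V}^{|L|}$ is by construction the pull-back of the height on $\overline{X}$ along $\pi_L$, the formula \eqref{eq:ht-eq} shows that $H\circ\pi_L$ is constant on $\Aut(L)$-orbits of $V_{\ur}^{|L|}(K)$, so each $\Aut(L)$-orbit $[y]$ has a well-defined height $H([y])=H(\pi_L(y))$ and the bijection is height-preserving. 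This yields
\[
Z_{X_{\prim}(K)}(s)=\sum_{L\in G\Fie(K)}\ \sum_{[y]\in V_{\ur}^{|L|}(K)/\Aut(L)}H([y])^{-s}.
\]

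Next I would lift each inner sum from the orbit space to $V_{\ur}^{|L|}(K)$ itself. Lemma~\ref{lem:free-action} guarantees that the $\Aut(L)$-action on $V_{\ur}^{|L|}(K)$ is free, so every orbit has cardinality exactly $\sharp\Aut(L)$; combined with height-invariance this gives
\[
\sum_{[y]}H([y])^{-s}=\frac{1}{\sharp\Aut(L)}\sum_{y\in V_{\ur}^{|L|}(K)}H(y)^{-s}=\frac{1}{\sharp\Aut(L)}\,Z_{V_{\ur}^{|L|}(K)}(s).
\]
Finally, for a $G$-field $L/K$, a $G$-equivariant $K$-automorphism of $L$ is an element of $\Aut_K(L)=\Gal(L/K)\cong G$ that commutes with the given $G$-action, and the centralizer of $G$ acting on itself (by the identification with $\Gal(L/K)$) is $Z(G)$; hence $\sharp\Aut(L)=\sharp Z(G)$, independently of $L$. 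Pulling this factor out of the outer sum yields the asserted formula.

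The substantive content is really already contained in the earlier lemmas: the height-preserving bijection comes from \eqref{eq:untw-prim} and \eqref{eq:ht-eq}, and the orbit-counting step relies on Lemma~\ref{lem:free-action}. The only genuinely new input is the identification $\sharp\Aut(L)=\sharp Z(G)$, and this is the step I would expect to write most carefully, since it depends on pinning down exactly what is meant by an automorphism of a $G$-field (a $G$-equivariant $K$-algebra automorphism) and exhibiting the identification with the centralizer of $G$ in $\Gal(L/K)$. Once that is in place, convergence is not an issue because the rearrangement is an equality of formal Dirichlet series and each $Z_{V_{\ur}^{|L|}(K)}(s)$ has the same abscissa of convergence as $Z_{X_{\prim}(K)}(s)$ by height-preservation.
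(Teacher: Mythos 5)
Your proof is correct and follows essentially the same route as the paper: the paper derives the proposition directly from the height-preserving bijection \eqref{eq:untw-prim}, the freeness of the $\Aut(L)$-action (Lemma \ref{lem:free-action}), and the identification $\sharp\Aut(L)=\sharp Z(G)$ (which the paper notes only in the introduction but which you justify correctly as the centralizer of $G$ in $\Gal(L/K)$). Your write-up merely makes explicit the orbit-counting step that the paper leaves implicit.
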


\subsection{The distribution of $K$-points of $V_{\protect\ur}^{|L|}$\label{sub:distribution-VL}}

Let us now choose a finite set of places, $S\subset M_{K}$, with
$M_{K,\infty}\subset S$ such that the $G$-action on $V$ extends
to $\cV=\AA_{\cO_{S}}^{d}$. Let $\overline{\cX}:=\overline{\cV}/G$.
We suppose that the metrized $-K_{\overline{X}}$ has the model $-K_{\overline{\cX}/\cO_{S}}$.
Then $-K_{\overline{V}^{|L|}}$ with the induced metric has the model
\[
-\pi_{L}^{*}K_{\overline{\cX}/\cO_{S}}=-K_{\overline{\cV}^{|L|}/\cO_{S}}+K_{\overline{\cV}^{|L|}/\overline{\cX}}.
\]

Let us see how the relative canonical divisor $K_{\overline{\cV}^{|L|}/\overline{\cX}}$
is determined. Firstly, since $\overline{V}^{|L|}\to\overline{V}$
is étale in codimension one, $K_{\overline{\cV}^{|L|}/\overline{\cX}}$
is supported on the union of finitely many closed fibers of $\overline{\cV}^{|L|}\to\Spec\cO_{S}$.
For $\fp\in S^{c}$, let $F_{\fp}$ be the fiber of $\cV^{|L|}\to\Spec\cO_{S}$
over $\fp$ and $\overline{F_{\fp}}$ its closure in $\overline{\cV}^{|L|}$.
If we write 
\[
K_{\overline{\cV}^{|L|}/\overline{\cX}}=\sum_{\fp\in S^{c}}a_{\fp}\cdot\overline{F_{\fp}},
\]
then 
\[
K_{\cV^{|L|}/\cX}=\sum_{\fp\in S^{c}}a_{\fp}\cdot F_{\fp}.
\]
To determine the coefficients $a_{\fp}$, let $L_{\fp}=L\otimes_{K}K_{\fp}$
be the $G$-ring over $K_{\fp}$ induced from $L$. Since the untwisting
is compatible with the base change from $\cO_{S}$ to $\cO_{\fp}$,
we have 
\[
K_{(\cV_{\cO_{\fp}})^{|L_{\fp}|}/\cX_{\cO_{\fp}}}=a_{\fp}\cdot F_{\fp},
\]
which was computed in \cite{Yasuda:2014fk2}. Let us write $\cV_{\cO_{\fp}}=\Spec S^{\bullet}\cF$
for a free $\cO_{\fp}$-module $\cF$. Then $\Xi_{L,\fp}:=\Xi_{L}\otimes_{\cO_{S}}\cO_{\fp}$
is identical to 
\[
\Hom_{\cO_{\fp}}^{G}(\cF,\cO_{L_{\fp}}),
\]
which is an $\cO_{\fp}$-submodule of the free $\cO_{L_{\fp}}$-module
$\Hom_{\cO_{\fp}}(\cF,\cO_{L_{\fp}})$. 
\begin{prop}[{\cite[Lem. 6.5]{Yasuda:2014fk2}}]
We have
\[
a_{\fp}=\frac{1}{\sharp G}\cdot\length\frac{\Hom_{\cO}(\cF,\cO_{L_{\fp}})}{\cO_{L_{\fp}}\cdot\Xi_{L,\fp}}.
\]

\end{prop}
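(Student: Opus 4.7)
The plan is to carry out a local computation at $\fp$, realizing the untwisting as an explicit linear polynomial morphism whose Jacobian ideal computes the claimed length.

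First I would localize. By the compatibility of the untwisting construction with flat base change noted after diagram~\ref{eq:untw diag}, the formation of $\cV^{|L|}$ commutes with the base change $\cO_{S}\to\cO_{\fp}$, and $K_{\cV^{|L|}/\cX}$ is supported on finitely many special fibers $F_{\fp}$, so the coefficient $a_{\fp}$ is computed entirely from the local untwisting of $\cV_{\cO_{\fp}}$ with respect to $L_{\fp}\in G\Tor(K_{\fp})$. Working over the DVR $\cO_{\fp}$, the modules $\cF$, $\Xi_{L,\fp}$ and $\Theta_{L,\fp}$ are all free of rank $d$, and $\cO_{L_{\fp}}$ is free of rank $\sharp G$ over $\cO_{\fp}$.

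Second, I would make the morphism $\cV^{\left\langle L_{\fp}\right\rangle }\to\cV^{|L_{\fp}|}$ explicit via matrix coordinates. Fix $\cO_{\fp}$-bases of $\cF$ and of $\Xi_{L,\fp}$; the inclusion
\[
\cO_{L_{\fp}}\cdot\Xi_{L,\fp}\hookrightarrow\Hom_{\cO_{\fp}}(\cF,\cO_{L_{\fp}})
\]
of free $\cO_{L_{\fp}}$-modules of rank $d$ is then represented by a square matrix $M\in M_{d}(\cO_{L_{\fp}})$, so that $\length_{\cO_{L_{\fp}}}(\mathrm{coker}\,M)=\ord_{L_{\fp}}(\det M)$. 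After dualizing and forming symmetric algebras, the same matrix expresses the morphism $\cV^{\left\langle L_{\fp}\right\rangle }\to\cV^{|L_{\fp}|}$ as an affine linear map of free $\cO_{L_{\fp}}$-schemes, so its relative Jacobian ideal is generated by $\det M$.

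Third, I would perform the Galois descent. Since $\cV^{\left\langle L_{\fp}\right\rangle }\to\cX_{\cO_{\fp}}$ factors both through $\cV_{\cO_{\fp}}$ (whose relative canonical divisor over $\cX_{\cO_{\fp}}$ is controlled, generically, by the étale-in-codimension-one hypothesis) and through $\cV^{|L_{\fp}|}$, pulling back $K_{\cV^{|L_{\fp}|}/\cX_{\cO_{\fp}}}=a_{\fp}F_{\fp}$ along the generically degree-$\sharp G$ map $\cV^{\left\langle L_{\fp}\right\rangle }\to\cV^{|L_{\fp}|}$ yields a divisor whose valuation can be read off from $\det M$. Converting the $\cO_{L_{\fp}}$-length to an $\cO_{\fp}$-length and dividing by $\sharp G$ produces the stated formula. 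The main obstacle is the careful bookkeeping of the two length normalizations and of the factor $\sharp G$, and the correct subtraction of the contribution of the different of $L_{\fp}/K_{\fp}$ when passing from $K_{\cV^{\left\langle L_{\fp}\right\rangle }/\cX_{\cO_{\fp}}}$ to $K_{\cV^{|L_{\fp}|}/\cX_{\cO_{\fp}}}$; this is precisely the content of the cited \cite[Lem.~6.5]{Yasuda:2014fk2}, to which one ultimately appeals.
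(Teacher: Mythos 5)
The paper itself offers no proof of this proposition: it is imported verbatim from Lemma~6.5 of the author's earlier paper on untwisting, exactly as you do in your final sentence, so there is no internal argument to compare against. Your sketch of how that external lemma goes --- localizing at $\fp$, representing the inclusion $\cO_{L_{\fp}}\cdot\Xi_{L,\fp}\hookrightarrow\Hom_{\cO}(\cF,\cO_{L_{\fp}})$ by a matrix whose determinant generates the Jacobian ideal of $\cV^{\left\langle L_{\fp}\right\rangle}\to\cV^{|L_{\fp}|}$, and descending by $\sharp G$ while accounting for the different --- is consistent with the surrounding text (compare the use of Lemma~5.7 of the same source later in the section) and with the cited result, so your treatment matches the paper's.
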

For $L\in G\Fie(K)$, let $C_{\Peyre,L}$ be Peyre's constant from
Theorem \ref{thm:Peyre} for the projective space $\overline{V}^{|L|}$
and the induced metric on its anti-canonical divisor. For any Zariski
open dense subset $U\subset\overline{V}^{|L|}$, 
\[
N_{U(K)}(B)\sim C_{\Peyre,L}B.
\]

\subsection{$V$-discriminants\label{sub:V-discriminants}}

We now introduce invariants of a $G$-ring $L$ over $K$ which measures
its ramification as well as the difference of the measures on $\overline{V}$
and $\overline{V}^{|L|}$.
\begin{defn}
\label{def:V-disc}For $L\in G\Fie(K)$, we define the $V$-\emph{discriminant
$D_{L}^{V}$ }and the \emph{extended $V$-discriminant $\tilde{D}_{L}^{V}$}
of $L\in G\Fie(K)$ as
\begin{gather*}
D_{L}^{V}:=\prod_{\fp\in S^{c}}\frac{\mu_{\fp}(V(K_{\fp}))}{\mu_{\fp}^{|L|}(\overline{V}^{|L|}(K_{\fp}))}=\prod_{\fp\in S^{c}}N_{\fp}^{a_{\fp}},\\
\tilde{D}_{L}^{V}:=\prod_{\fp\in M_{K}}\frac{\mu_{\fp}(V(K_{\fp}))}{\mu_{\fp}^{|L|}(\overline{V}^{|L|}(K_{\fp}))}=\prod_{\fp\in S^{c}}N_{\fp}^{a_{\fp}}\times\prod_{\fp\in S}\frac{\mu_{\fp}(V(K_{\fp}))}{\mu_{\fp}^{|L|}(\overline{V}^{|L|}(K_{\fp}))}.
\end{gather*}
We often omit the superscript $V$ if it causes no confusion.
\end{defn}
The (extended) $V$-discriminant is a global version of $N_{\fp}^{a_{\fp}}$,
which was studied by Wood and the author \cite{Wood-Yasuda-I} and
whose motivic relative was previously considered by the author \cite{Yasuda:2013fk}.
Dummit \cite{Dummit:2014vb} introduced the notion of $\rho$-\emph{discriminants,
}also using the tuning module. It seems that $V$-discriminants and
$\rho$-discriminants are closely related and differ only by some
constant factor, although the author does not know how to relate them
precisely. 

There exists constants $C_{1},C_{2}>0$ independent of $L$ such that
\begin{equation}
C_{1}D_{L}\le\tilde{D}_{L}\le C_{2}D_{L}.\label{eq:V-disc-diff}
\end{equation}
This is true because the $\fp$-factor of $\tilde{D}_{L}$ depends
only on the $G$-ring $L\otimes_{K}K_{\fp}$ over $K_{\fp}$. For
each $\fp$, there exist only finitely many such $G$-rings up to
isomorphism. Therefore 
\[
\frac{\tilde{D}_{L}}{D_{L}}=\prod_{\fp\in S}\frac{\mu_{\fp}(V(K_{\fp}))}{\mu_{\fp}^{|L|}(\overline{V}^{|L|}(K_{\fp}))}
\]
can take only finitely many distinct values. Thus, which we consider
$D_{L}$ or $\tilde{D}_{L}$ does not cause much difference, when
concerning the distribution of $G$-fields, which is discussed in
later sections. 

Let $C_{\Peyre,0}$ be Peyre's constant for the projective space $V$
and the given metric on its anti-canonical divisor, and $C_{\Peyre,L}$
the one for $V^{|L|}$ as above. The extended $V$-discriminant is
characterized by 
\begin{equation}
C_{\Peyre,L}=\tilde{D}_{L}^{-1}C_{\Peyre,0}.\label{eq:Cdiff}
\end{equation}

The following result justifies the statement that $V$-discriminants
measure ramification.
\begin{lem}
Suppose that the $G$-action on $\cV_{\kappa_{\fp}}$ is faithful.
For $\fp\in S^{c}$, $a_{\fp}>0$ if and only if $L/K$ is ramified
at $\fp$. \end{lem}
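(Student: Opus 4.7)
The plan is to use the explicit formula
\[
a_{\fp}=\frac{1}{\sharp G}\cdot\length\frac{\Hom_{\cO_{\fp}}(\cF,\cO_{L_{\fp}})}{\cO_{L_{\fp}}\cdot\Xi_{L,\fp}}
\]
from the preceding proposition and reformulate: since both $\cO_{L_{\fp}}\cdot\Xi_{L,\fp}$ and $\Hom_{\cO_{\fp}}(\cF,\cO_{L_{\fp}})$ are free $\cO_{L_{\fp}}$-modules of the same rank $d$, becoming equal after inverting $\fp$, the vanishing $a_{\fp}=0$ is equivalent to the natural multiplication map
\[
\mu\colon\cO_{L_{\fp}}\otimes_{\cO_{\fp}}\Xi_{L,\fp}\longrightarrow\Hom_{\cO_{\fp}}(\cF,\cO_{L_{\fp}})
\]
being an isomorphism. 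Geometrically, this is exactly the statement that $\cV^{|L|}\to\cX$ is étale at the generic point of the fiber $F_{\fp}$.

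For the direction \emph{unramified $\Rightarrow a_{\fp}=0$}, I would exploit the fact, built into the construction of the tuning module, that the formation of $\cV^{|L|}$ is compatible with flat base change of $\cO_{\fp}$. If $L_{\fp}/K_{\fp}$ is unramified, then $\cO_{\fp}\to\cO_{L_{\fp}}$ is étale, and base-changing the diagram (\ref{eq:untw diag}) along this map trivialises the $G$-torsor $\Spec\cO_{L_{\fp}}\to\Spec\cO_{\fp}$. Consequently $\cV^{|L|}\otimes_{\cO_{\fp}}\cO_{L_{\fp}}$ becomes canonically identified with $\cV\otimes_{\cO_{\fp}}\cO_{L_{\fp}}$ as a scheme over $\cX\otimes\cO_{L_{\fp}}$, so that after the étale cover $\Spec\cO_{L_{\fp}}\to\Spec\cO_{\fp}$ the morphism $\cV^{|L|}\to\cX$ is identified with the base change of $\cV\to\cX$. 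The hypothesis that $G$ acts faithfully on $\cV_{\kappa_{\fp}}$ (together with the absence of pseudo-reflections that was standing in Section~\ref{sec:Quotients-of-projective}) ensures $\cV\to\cX$ is étale at the generic point of its special fiber, and étaleness descends along the étale cover, giving $a_{\fp}=0$.

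For the direction \emph{ramified $\Rightarrow a_{\fp}>0$}, I would reduce $\mu$ modulo $\fp$ and analyse the image concretely. By flat base change one has $\Xi_{L,\fp}/\fp\Xi_{L,\fp}=\Hom^{G}_{\kappa_{\fp}}(\overline{\cF},A)$ where $\overline{\cF}=\cF/\fp\cF$ and $A=\cO_{L_{\fp}}/\fp\cO_{L_{\fp}}$, so the reduction of $\mu$ is the multiplication
\[
\overline{\mu}\colon A\otimes_{\kappa_{\fp}}\Hom^{G}_{\kappa_{\fp}}(\overline{\cF},A)\longrightarrow\Hom_{\kappa_{\fp}}(\overline{\cF},A).
\]
If $L_{\fp}/K_{\fp}$ is ramified, then $A=\prod_{i}\kappa_{i}[\pi_{i}]/(\pi_{i}^{e_{i}})$ has a non-trivial nilradical $\fm_{A}$. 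Decomposing $\overline{\cF}$ according to the $G$-module structure and using that by faithfulness on $\cV_{\kappa_\fp}$ at least one isotypic component carries a non-trivial action, I would show that the image of $\overline{\mu}$ is contained in the proper $A$-submodule $\Hom_{\kappa_{\fp}}(\overline{\cF},A)_{0}+\Hom_{\kappa_{\fp}}(\overline{\cF}_{\mathrm{nontriv}},\fm_{A})$; the point is that a $G$-equivariant map must send a vector with non-trivial stabiliser character into the corresponding isotypic summand of $A$, and on the ramified factors this summand is forced inside $\fm_{A}$ because the unit part $(A/\fm_{A})^{H}\!=\!\kappa_{i}^{H}$ of each residue field is the trivial character only. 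The faithfulness hypothesis is precisely what guarantees that some non-trivial character does appear.

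The main obstacle is the last paragraph: tracking the character-by-character argument rigorously through the possibly mixed tame/wild and split/inert situations. A convenient alternative, which I would probably pursue in a clean write-up, is to avoid this case analysis by arguing geometrically: show that $\cV^{\langle L\rangle}\to\cV\times_{\cX}\cV^{|L|}$ is an isomorphism after normalisation (both sides are degree $\sharp G$ over $\cV^{|L|}$ and agree generically), so that $a_{\fp}=0$ would force $\cV^{\langle L\rangle}=\cV\otimes_{\cO_{\fp}}\cO_{L_{\fp}}\to\cV$ to be étale at the generic point of the special fiber, whence $\cO_{L_{\fp}}/\cO_{\fp}$ is étale and $L_{\fp}/K_{\fp}$ unramified. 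The delicate point here is verifying normality of $\cV\times_{\cX}\cV^{|L|}$ along $F_{\fp}$, which again uses faithfulness of the $G$-action on $\cV_{\kappa_{\fp}}$.
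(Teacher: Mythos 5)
Your overall strategy is sound, and your ``alternative'' geometric route is in fact the closest to what the paper does. The paper's proof passes to the completion $\cO_{\fp}^{\ur}$ of the maximal unramified extension, lets $H$ be the stabilizer (inertia group) of a connected component of $\Spec L\otimes_{K}K_{\fp}^{\ur}$, and quotes a prior result (\cite[Lem.~5.7]{Yasuda:2014fk2}) saying that $\sharp H\cdot a_{\fp}$ is the multiplicity of the Jacobian ideal of the pre-untwisting morphism $\bigl(\cV_{\cO_{\fp}^{\ur}}^{\langle L_{\fp}^{\ur}\rangle}\bigr)_{0}\to\cV\otimes\cO_{L_{\fp}^{\ur}}$ along the closed fiber; if $H\ne1$ the closed fiber maps into the $H$-fixed locus of $\cV_{\overline{\kappa_{\fp}}}$, which is proper by the faithfulness hypothesis, so the multiplicity is positive, while if $H=1$ the morphism is an isomorphism. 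So the paper gets both directions at once from one formula, whereas you treat the unramified direction by trivializing the torsor after \'etale base change (correct, and essentially the same mechanism) and the ramified direction separately.

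The one genuine soft spot is your primary route for ``ramified $\Rightarrow a_{\fp}>0$'': the character-by-character/isotypic-component argument, which you yourself flag, really does break down in the wildly ramified case, since when $p=\characteristic(\kappa_{\fp})$ divides the order of the inertia group $I$ there is no isotypic decomposition of $\overline{\cF}$ over $\kappa_{\fp}$ (e.g.\ every irreducible $\overline{\kappa_{\fp}}$-representation of a $p$-group is trivial). The repair is to drop characters entirely and argue with coinvariants: any $G$-equivariant $\phi\colon\cF\to\cO_{L_{\fp}}$, composed with reduction modulo the Jacobson radical $\fm_{A}$ of $A=\cO_{L_{\fp}}/\fp\cO_{L_{\fp}}$, is an $I$-equivariant map into a module on which $I$ acts trivially (inertia acts trivially on the residue fields), hence factors through the coinvariants $\overline{\cF}_{I}=\overline{\cF}/\langle(g-1)v\rangle$. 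Faithfulness of $G$ on $\cV_{\kappa_{\fp}}$ means $I\ne1$ acts non-trivially on $\overline{\cF}$, so $\overline{\cF}_{I}$ is a proper quotient, the image of $\overline{\mu}$ modulo $\fm_{A}$ lies in a proper $A/\fm_{A}$-submodule, and $a_{\fp}>0$. This is exactly the module-theoretic shadow of the paper's ``the closed fiber maps onto the $H$-fixed point locus,'' and it works uniformly in the tame and wild cases. (Minor quibble: your claim $\Xi_{L,\fp}/\fp\Xi_{L,\fp}=\Hom_{\kappa_{\fp}}^{G}(\overline{\cF},A)$ is not flat base change and need not hold, since $G$-invariants do not commute with reduction mod $\fp$; but the containment you actually use goes in the harmless direction.)
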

\begin{proof}
To compute $a_{\fp}$, we can consider the untwisting with the base
$\cO_{\fp}$. Furthermore, from \cite{Wood-Yasuda-I}, we can replace
the base with the completion of the maximal unramified extension of
$\cO_{\fp}$; we denote it by $\cO_{\fp}^{\ur}$ and its function
field by $K_{\fp}^{\ur}$. We denote the $G$-ring $L\otimes_{K}K_{\fp}^{\ur}$
over $K_{\fp}^{\ur}$ by $L_{\fp}^{\ur}$. We get the following diagram.
\[
\xymatrix{ & \cV_{\cO_{\fp}^{\ur}}^{\left\langle L_{\fp}^{\ur}\right\rangle }\ar[dl]\ar[dr]\\
\cV_{\cO_{\fp}^{\ur}} &  & \cV_{\cO_{\fp}^{\ur}}^{|L_{\fp}^{\ur}|}
}
\]
Let $H$ be the stabilizer of a connected component of $\Spec L_{\fp}^{\ur}$
and $\left(\cV_{\cO_{\fp}^{\ur}}^{\left\langle L_{\fp}^{\ur}\right\rangle }\right)_{0}$
the corresponding connected component of $\cV_{\cO_{\fp}^{\ur}}^{\left\langle L_{\fp}^{\ur}\right\rangle }$.
We note that $H=1$ if and only if $L/K$ is unramified at $\fp$.
From \cite[Lem. 5.7]{Yasuda:2014fk2}, $\sharp H\cdot a_{\fp}$ is
the multiplicity of the Jacobian ideal of the morphism
\begin{equation}
\left(\cV_{\cO_{\fp}^{\ur}}^{\left\langle L_{\fp}^{\ur}\right\rangle }\right)_{0}\to\cV_{\cO_{\fp}^{\ur}}\otimes_{\cO_{\fp}^{\ur}}\cO_{L_{\fp}^{\ur}}\label{eq:mor-jac}
\end{equation}
along the closed fiber
\[
\left(\cV_{\cO_{\fp}^{\ur}}^{\left\langle L_{\fp}^{\ur}\right\rangle }\right)_{0}\otimes_{\tilde{\cO}_{\fp}^{\ur}}\overline{\kappa_{\fp}}.
\]
If $H\ne1$, then the closed fiber maps onto the $H$-fixed point
locus in $\cV_{\overline{\kappa_{\fp}}}$, which is by assumption
a proper subset of the closed fiber. Therefore the Jacobian ideal
has positive multiplicity and hence $a_{\fp}>0$. If $H=1$, then
(\ref{eq:mor-jac}) is an isomorphism and the Jacobian ideal is trivial.
Hence $a_{\fp}=0$.\end{proof}
\begin{prop}
\label{prop:fin many torsors}For every real number $B>0$, there
exist at most finitely many $L\in G\Tor(K)$ with $D_{L}\le B$. Similarly
for $\tilde{D}_{L}$. \end{prop}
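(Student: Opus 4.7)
The plan is to reduce the finiteness to the Hermite--Minkowski theorem by bounding the classical absolute discriminant of $L/K$ in terms of $D_L$. The statement for $\tilde D_L$ then follows immediately from the sandwich (\ref{eq:V-disc-diff}), so I focus on $D_L$.

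First, I would establish a uniform lower bound: there exists $c>0$, depending only on $G$, $V$, $K$ and $S$, such that $a_\fp\ge c$ whenever $L\in G\Tor(K)$ is ramified at $\fp\in S^c$. Split $S^c$ into \emph{tame} primes (those with $N_\fp$ coprime to $\sharp G$) and the finitely many remaining \emph{wild} primes. For each wild $\fp$, Krasner's lemma ensures that only finitely many étale $K_\fp$-algebras of rank $\sharp G$ endowed with a $G$-action exist up to isomorphism, so the positive values of $a_\fp$ admit a positive minimum $\epsilon_\fp$. For a tame $\fp$ at which $L_\fp$ is ramified, the inertia is cyclic, generated by some $\tau\in G\setminus\{1\}$, and the untwisting formula recalled in Section \ref{sub:distribution-VL} identifies $a_\fp$ with $\age(\tau)$ (the tame McKay correspondence); faithfulness of $V$ then yields $a_\fp=\age(\tau)\ge\age(G)>0$. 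Setting $c:=\min\{\age(G),\min_\fp\epsilon_\fp\}>0$ gives the desired bound.

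From $D_L\le B$ we deduce $N_\fp\le B^{1/c}$ for every $\fp$ at which $L$ ramifies, so only finitely many primes can ramify, and for each of them $a_\fp\le\log B/\log N_\fp$ is bounded from above, which forces $L_\fp$ into a finite list of local $G$-rings. A uniform comparison $d_\fp\le C\cdot a_\fp$, with $d_\fp$ the classical different exponent of $L_\fp/K_\fp$ and $C$ depending only on $G$ and $V$ (in the tame case, $d_\fp$ is bounded by a function of $\sharp G$ while $a_\fp\ge\age(G)$, and the finitely many wild cases are handled by direct inspection on the finite list of local $G$-rings), then yields $|\Disc(L/K)|\le D_L^{C}\le B^{C}$. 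Since every field factor of $L$ has degree at most $\sharp G$ and its discriminant divides $\Disc(L/K)$, the classical Hermite--Minkowski theorem produces finitely many possibilities for the underlying étale algebra, and each such algebra supports only finitely many $G$-equivariant isomorphism classes of $G$-action, completing the proof.

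The main obstacle is the tame identification $a_\fp=\age(\tau)$, which is what underlies the clean lower bound $\age(G)$. If this identification proves technically delicate in the global mixed-characteristic setting, one can alternatively argue abstractly: for tame primes the isomorphism class of $L_\fp$ as a $G$-ring depends only on the pair (cyclic inertia, Frobenius action on inertia) up to $G$-conjugation, so among all tame primes there are only finitely many local $G$-ring types, whose positive $a_\fp$-values are then bounded below by a common positive constant, bypassing the explicit age computation.
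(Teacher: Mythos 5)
Your proof is correct and follows the same overall architecture as the paper's: reduce to $D_{L}$ via (\ref{eq:V-disc-diff}), show that each ramified prime $\fp\in S^{c}$ contributes a factor $N_{\fp}^{a_{\fp}}$ with $a_{\fp}$ bounded below by a positive constant, conclude that $D_{L}\le B$ confines ramification to a fixed finite set of primes, and finish with a Hermite--Minkowski-type finiteness statement. The two places where you diverge both amount to doing more work than necessary. First, your lower bound on $a_{\fp}$ goes through a tame/wild dichotomy and the tame identification $a_{\fp}=\age(\tau)$, which you yourself flag as the delicate point; the paper instead reads the bound off the length formula $a_{\fp}=\frac{1}{\sharp G}\length(\cdots)$ quoted from \cite[Lem. 6.5]{Yasuda:2014fk2}: since the length is a non-negative integer, $a_{\fp}>0$ forces $a_{\fp}\ge1/\sharp G$, so the preceding lemma ($a_{\fp}>0$ iff $L/K$ ramifies at $\fp$, after enlarging $S$ so that the $G$-action on each $\cV_{\kappa_{\fp}}$ is faithful) immediately gives $D_{L}\ge N_{\fp}^{1/\sharp G}$ at every ramified $\fp\in S^{c}$, with no McKay-type computation and no case split. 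Second, once ramification is confined to a finite set, the paper invokes the finiteness of extensions of $K$ of bounded degree unramified outside a finite set of places (Neukirch, p.\ 203) rather than first converting the bound on $D_{L}$ into a bound on the classical discriminant; this makes your comparison $d_{\fp}\le C\cdot a_{\fp}$ and the passage through $|\Disc(L/K)|\le B^{C}$ unnecessary (and spares you from having to control the discriminant contributions at the primes of $S$, which your write-up glosses over). Your route is sound, but both constants it requires are obtained more cheaply from the integrality of the length and the unramified-outside-$S$ form of the finiteness theorem.
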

\begin{proof}
From (\ref{eq:V-disc-diff}), it suffices to consider the case of
$D_{L}$. Replacing $S$ if necessary, we may suppose that for every
$\fp\in S^{c}$, the $G$-action on $\cV_{\kappa_{\fp}}$ is faithful.
If $L/K$ is ramified at $\fp\in S^{c}$, then the lemma above shows
$D_{L}\ge N_{\fp}^{1/\sharp G}$. For the inequality $D_{L}\le B$
holding, it is necessary that $L/K$ is unramified over those $\fp\in S^{c}$
with $N_{\fp}^{1/\sharp G}>B$, in particular, over almost all $\fp\in S$.
From \cite[p.\ 203]{MR1697859}, there exist only finitely many such
$G$-rings $L$. 
\end{proof}

\section{Modified heights\label{sec:Modified-heights}}

From (\ref{eq:Cdiff}) one might naively expect that heights of $K$-points
of $V^{|L|}$ is roughly $\tilde{D}_{L}^{-1}$ times the ones of the
corresponding points of $V$ through a certain isomorphism $V\cong V^{|L|}$.
Since there exists no canonical isomorphism between $V$ and $V^{|L|}$,
and even if we make good choices of isomorphisms, since the changes
of $\fp$-adic metrics for $\fp\in S$ seem to be hard to trace (see
Remark \ref{rem:tw-untw}), it must be difficult to know a precise
relation between heights on $V$ and $V^{|L|}$. However, if we assume
the naive expectation to be true, then we can explain Conjecture \ref{conj:BT}
in terms of the distribution of $G$-rings over $K$ and vice versa.

We keep the notation from Sections \ref{sub:distribution-VL} and
\ref{sub:V-discriminants}. In particular, we fix a metric on $-K_{\overline{X}}$
given by a model $-K_{\overline{\cX}/\cO_{S}}$ and the induced height
function $H$ on $\overline{X}(K)$. We also fix the induced height
functions on $\overline{V}(K)$ and $\overline{V}^{|L|}(K)$, which
we denote again by $H$. For each $L\in G\Fie(K)$, we fix a $K$-linear
isomorphism 
\begin{equation}
\psi_{L}:V\xrightarrow{\sim}V^{|L|}.\label{eq:chosen isom}
\end{equation}
 
\begin{defn}
We define the \emph{modified height }of $y\in V^{|L|}(K)$ as
\[
H^{\modified}(y)=\frac{H(\psi_{L}^{-1}(y))}{\tilde{D}_{L}}.
\]
We define the \emph{modified height zeta function $Z_{V^{|L|}(K)}^{\modified}(s)$
}of\emph{ $V^{|L|}(K)$ }by
\[
\sum_{x\in V^{|L|}(K)}H^{\modified}(x)^{-s}.
\]

\end{defn}
By definition, 
\[
Z_{V^{|L|}(K)}^{\modified}(s)=\tilde{D}_{L}^{-s}\cdot Z_{V(K)}(s).
\]
Let 
\[
Z_{V^{|L|}(K)}(s)=\sum_{x\in V^{|L|}(K)}H(x)^{-s}
\]
be the height zeta function of $V^{|L|}(K)$ with respect to the given
(not modified) height. If we assume meromorphic continuation of these
functions as in Condition \ref{cond:merom-continuation}, then they
have the right-most pole at the same place, of the same order (order
one) and of the same residue. Therefore we think of $Z_{V^{|L|}(K)}^{\modified}(s)$
as an approximation of $Z_{V^{|L|}(K)}(s)$. 
\begin{defn}
We define the \emph{extended $V$-discriminant zeta function of $G$-fields
}by\emph{
\[
Z_{V,G,K}^{\disc}(s)=Z^{\disc}(s):=\frac{1}{\sharp Z(G)}\cdot\sum_{L\in G\Fie(K)}\tilde{D}_{L}^{-s}.
\]
}
\end{defn}
We then think of the function, 
\begin{equation}
\cZ(s):=\frac{1}{\sharp Z(G)}\cdot\sum_{L\in G\Fie(K)}Z_{V^{|L|}(K)}^{\modified}(s)=Z^{\disc}(s)Z_{V(K)}(s),\label{eq:Main fn eq}
\end{equation}
as an approximation of $Z_{X_{\prim}(K)}(s)$, and heuristically expect
that $\cZ(s)$ and $Z_{X_{\prim}(K)}(s)$ have the right-most poles
at the same place and of the same order.

Let us suppose that the functions $Z_{V(K)}(s)$ and $Z^{\disc}(s)$
admits meromorphic continuation as in Condition \ref{cond:merom-continuation}.
Suppose that the right-most pole of $Z^{\disc}(s)$ is at $s=\alpha$
and has order $\beta$. The place and the order of the right-most
pole of $\cZ(s)$ are then given in Table \ref{table}.

\begin{table}
\begin{tabular}{|c|c|c|}
\hline 
 & place & order\tabularnewline
\hline 
\hline 
$\alpha<1$ & 1 & 1\tabularnewline
\hline 
$\alpha=1$ & 1 & $\beta+1$\tabularnewline
\hline 
$\alpha>1$ & $\alpha$ & $\beta$\tabularnewline
\hline 
\end{tabular}

\protect\caption{the right-most pole of $\protect\cZ(s)$}
\label{table}
\end{table}

\section{Distribution of number fields\label{sec:Distribution-of-number}}

In this section we first recall Malle's conjecture \cite{MR1884706,MR2068887}
on the distribution of extensions of a number field. We then propose
a generalization of it, which we will relate with a version of Manin's
conjecture, Conjecture \ref{conj:BT}, in the next section.

\subsection{Malle's conjecture}

Let $G$ be a transitive finite subgroup of the symmetric group $S_{n}$,
acting on $[n]:=\{1,\dots,n\}$. 
\begin{defn}
A degree $n$ field extension $L$ of $K$ is called a \emph{small
$G$-field (over $K$) }if its Galois closure $\hat{L}/K$ has a Galois
group which is permutation isomorphic to $G$. Two small $G$-fields
are said to be \emph{isomorphic }if there is a $K$-isomorphism between
them. The set of small $G$-fields over $K$ up to isomorphism is
denoted by $G\fie(K)$. 
\end{defn}
Small $G$-fields and large $G$-fields are related by the map
\[
G\Fie(K)\to G\fie(K),\,L\mapsto L^{G_{1}},
\]
where $G_{1}\subset G$ is the stabilizer subgroup of $1\in[n]$. 
\begin{lem}
\label{lem:map-fiber}The map above is a $\frac{\sharp N_{S_{n}}(G)\sharp Z(G)}{\sharp C_{S_{n}}(G)\sharp G}$-to-one
surjection.\end{lem}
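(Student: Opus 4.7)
The plan is to analyze the fiber of $L\mapsto L^{G_1}$ combinatorially through Galois theory. \emph{Surjectivity} is essentially built into the definition: given $M\in G\fie(K)$ with Galois closure $\hat M$, the hypothesis that $\Gal(\hat M/K)$ is permutation isomorphic to $G\subset S_n$ supplies an isomorphism $\sigma:\Gal(\hat M/K)\xrightarrow{\sim} G$ carrying $\Gal(\hat M/M)$ onto $G_1$, so $(\hat M,\sigma)\in G\Fie(K)$ lies above $M$. For the fiber count, fix $M\in G\fie(K)$ together with an embedding $M\hookrightarrow L:=\hat M$ and let $H_0:=\Gal(L/M)$. Since $L$ is the Galois closure of $L^{\sigma^{-1}(G_1)}$, every class in the fiber over $[M]$ has a representative of the form $(L,\sigma)$; and such a pair actually lies over $M$ iff $L^{\sigma^{-1}(G_1)}\cong_K M$, which by the subgroup/subfield dictionary is equivalent to $\sigma^{-1}(G_1)$ being $\Gal(L/K)$-conjugate to $H_0$.

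Next I would pin down the equivalence: $(L,\sigma)$ and $(L,\sigma')$ define isomorphic $G$-fields iff there is an $f\in\Aut_K(L)=\Gal(L/K)$ with $f\circ\sigma^{-1}(g)=\sigma'^{-1}(g)\circ f$ for all $g\in G$. Rewriting this as $\sigma'^{-1}(g)=f\sigma^{-1}(g)f^{-1}$ and applying $\sigma$ yields $\sigma'\sigma^{-1}(g)=\sigma(f)^{-1}\,g\,\sigma(f)$, so the equivalence relation is precisely $\sigma'\sigma^{-1}\in\mathrm{Inn}(G)$, and every inner automorphism arises (take $f=\sigma^{-1}(g^{-1})$). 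Fix a base point $\sigma_0$ with $\sigma_0(H_0)=G_1$ (available by the definition of small $G$-field) and write an arbitrary $\sigma$ as $\alpha\circ\sigma_0$ with $\alpha\in\Aut(G)$. The conjugacy condition on $\sigma^{-1}(G_1)$ transports under $\sigma_0$ to ``$\alpha(G_1)$ is $G$-conjugate to $G_1$'', so the fiber over $[M]$ is identified with $A/\mathrm{Inn}(G)$, where
\[
A:=\{\alpha\in\Aut(G)\mid \alpha(G_1)\text{ is }G\text{-conjugate to }G_1\}
\]
and $\mathrm{Inn}(G)\subseteq A$ acts by left multiplication.

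The crucial identification is $A=\mathrm{Image}\bigl(N_{S_n}(G)\to\Aut(G)\bigr)$, the map being conjugation. For the inclusion ``$\supseteq$'', conjugation by $\tau\in N_{S_n}(G)$ sends $G_1=\Stab_G(1)$ to $\Stab_G(\tau(1))$, which is $G$-conjugate to $G_1$ by transitivity. Conversely, given $\alpha\in A$, the standard $G$-action on $[n]$ and the action twisted by $\alpha$ share a point stabilizer up to $G$-conjugacy and are therefore isomorphic $G$-sets; any isomorphism is an element $\tau\in S_n$ with $\tau\alpha(g)\tau^{-1}=g$ in $S_n$, forcing $\tau\in N_{S_n}(G)$ and $\alpha=\mathrm{conj}_{\tau^{-1}}|_G$. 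Since $\ker\bigl(N_{S_n}(G)\to\Aut(G)\bigr)=C_{S_n}(G)$, we have $\sharp A=\sharp N_{S_n}(G)/\sharp C_{S_n}(G)$, and combining with $\sharp\mathrm{Inn}(G)=\sharp G/\sharp Z(G)$ gives
\[
\sharp(\text{fiber})=\frac{\sharp A}{\sharp\mathrm{Inn}(G)}=\frac{\sharp N_{S_n}(G)\cdot\sharp Z(G)}{\sharp C_{S_n}(G)\cdot\sharp G}.
\]
The main obstacle is the bookkeeping in the middle step: correctly translating ``isomorphism of $G$-fields'' into a group-theoretic relation on $\sigma$'s (and seeing that the ambiguity is exactly $\mathrm{Inn}(G)$, not a larger subgroup of $\Aut(G)$), and then disentangling the two roles played by elements of $G$, namely as automorphisms of $L$ via $\sigma^{-1}$ and as permutations of $[n]$ via $G\subset S_n$, when identifying $A$ with the image of $N_{S_n}(G)$.
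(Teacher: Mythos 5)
Your proof is correct, but it takes a different route from the paper's. The paper factors both counting maps through the set $S_{G,K}$ of continuous surjections $\Gal(\overline{K}/K)\twoheadrightarrow G$: the map $S_{G,K}\to G\Fie(K)$ is the quotient by $G$-conjugation (stabilizer $Z(G)$, so $\sharp G/\sharp Z(G)$-to-one), while $S_{G,K}\to G\fie(K)$ is the restriction of the $S_n$-conjugation quotient $T_{S_n,K}\to n\fie(K)$ (the part of an $S_n$-orbit consisting of surjections onto $G$ itself is a torsor under $N_{S_n}(G)/C_{S_n}(G)$, so this map is $\sharp N_{S_n}(G)/\sharp C_{S_n}(G)$-to-one); the lemma follows by dividing. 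You instead fix the Galois closure $L$ of a given small $G$-field $M$ and compute the fiber directly as a coset space $A/\mathrm{Inn}(G)$ inside $\Aut(G)$, where $A$ is the subgroup of automorphisms carrying $G_1$ to a $G$-conjugate of itself, and then identify $A$ with the image of $N_{S_n}(G)\to\Aut(G)$ via the transitive $G$-set $[n]\cong G/G_1$. The two arguments are essentially dual: the paper's is shorter because the $S_n$-conjugation bookkeeping absorbs exactly the ``lying over $M$'' condition that you must track explicitly through the subgroup $A$ and the $G$-set isomorphism; your version has the advantage of being self-contained at the level of a single finite Galois extension, making visible where each factor ($N_{S_n}(G)/C_{S_n}(G)$ from permutation-equivalences of the $G$-structure, $G/Z(G)$ from inner automorphisms) comes from. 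The one point worth stating explicitly in your write-up is that $L$ really is the Galois closure of $L^{\sigma^{-1}(G_1)}$ for every admissible $\sigma$, which uses that $G_1$ has trivial core in $G$ (faithfulness of the transitive action); you use this implicitly when asserting that every class in the fiber has a representative with underlying field $L$.
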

\begin{proof}
The map is clearly surjective. Let $S_{G,K}$ be the set of continuous
surjections of $\Gal(\bar{K}/K)$ to $G$. The natural map
\[
S_{G,K}\to G\Fie(K)
\]
can be identified with the quotient map associated to the $G$-action
on $S_{G,K}$ by conjugation. Therefore this map is a $\frac{\sharp G}{\sharp Z(G)}$-to-one
surjection. On the other hand, the natural map 
\begin{equation}
S_{G,K}\to G\fie(K)\label{eq:G-fie-1}
\end{equation}
is identified with the restriction of the natural map
\[
T_{S_{n},K}\to n\fie(K),
\]
where $T_{S_{n},K}$ is the set of continuous homomorphisms $\Gal(\overline{K}/K)\to S_{n}$
whose images are transitive subgroups and $n\fie(K)$ is the set of
isomorphism classes of degree $n$ field extensions of $K$. The last
map is, in turn, identified with the quotient map associated to the
$S_{n}$-conjugation. Hence map (\ref{eq:G-fie-1}) is a $\frac{\sharp N_{S_{n}}(G)}{\sharp C_{S_{n}}(G)}$-to-one
surjection. We have proved the lemma.\end{proof}
\begin{notation}
For a finite extension $L/K$, let $\mathrm{Disc}_{L/K}$ be its discriminant
and let $d_{L}:=|N_{K/\QQ}(\mathrm{Disc}_{L/K})|$. For a real number
$B>0$, we consider the number of small $G$-fields with $d_{L}$
bounded: 
\[
n(G,B):=\sharp\{L\in G\fie(K)\mid d_{L}\le B\}.
\]

\end{notation}
Malle \cite{MR1884706,MR2068887} raised a conjecture concerning the
asymptotic behavior of $n(G,B)$ as $B$ tends to the infinity. To
state it, we define some invariants. 
\begin{defn}
For $g\in G$, we define its \emph{index}, denoted $\ind(g)$, to
be 
\[
n-\sharp\{g\text{-orbits in }[n]\}.
\]
We define the \emph{index }of $G$, $\ind(G)$, to be the minimum
of $\ind(g)$, $g\in G\setminus\{1\}$. 
\end{defn}
Since the index of an element of $G$ depends only on its $K$-conjugacy
class, the index of a $K$-conjugacy class makes sense. Let $\beta(G)$
denote the number of $K$-conjugacy classes having index equal to
$\ind(G)$. 
\begin{conjecture}[Malle's conjecture \cite{MR1884706,MR2068887}]
\label{conj:Malle}If $K$ is sufficiently large, then we have 
\[
n(G,B)\sim CB^{1/\ind(G)}(\log B)^{\beta(G)-1}.
\]

\end{conjecture}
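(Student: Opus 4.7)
The plan is to approach Conjecture \ref{conj:Malle} through the heuristic framework developed in the paper rather than by a direct analytic attack. Take $V=K^{2n}$ to be the direct sum of two copies of the natural permutation representation of $G\subset S_n$; the doubling ensures that the quotient map $\PP_K^{2n}\to\overline{X}:=\PP_K^{2n}/G$ is \'etale in codimension one, so $\overline{X}$ is a log terminal Fano variety in the scope of Conjecture \ref{conj:BT}. Using Lemma \ref{lem:map-fiber} to pass between small and large $G$-fields, I would first show that for this particular $V$ the $V$-discriminant $D_L^V$ agrees (up to uniformly bounded factors absorbed by $\tilde D_L^V$) with the classical discriminant $d_L$, and that $\ind(G)=\age(G)$ and $\beta(G)=\upsilon(G)$. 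Granting these identifications, Conjecture \ref{conj:Malle} is reduced to Conjecture \ref{conj:general Malle}, and it suffices to determine the asymptotic of $N_{G,V,K}(B)$.

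Next, I would apply the point decomposition (\ref{eq:untw-prim}) and the resulting height zeta function identity (\ref{eq:ht-zetas}). Proposing $X_{\prim}(K)$ as the sufficiently small cothin subset in Conjecture \ref{conj:BT}, and invoking the Peyre comparison (\ref{eq:Cdiff}) together with the modified height construction, one arrives at the heuristic approximation
\[
Z_{X_{\prim}(K)}(s)\;\approx\;\cZ(s)\;=\;Z^{\disc}(s)\,Z_{V(K)}(s).
\]
Since $Z_{V(K)}(s)$ has a simple pole at $s=1$ by Peyre's theorem (Theorem \ref{thm:Peyre}) and Conjecture \ref{conj:BT} predicts the right-most pole of $Z_{X_{\prim}(K)}(s)$ to be at $s=1/\age(G)$ of order $\delta(\overline{X})=\upsilon(G)$ (using Proposition \ref{prop:div min}), reading Table \ref{table} isolates the right-most pole of $Z^{\disc}(s)$ at $s=1/\age(G)=1/\ind(G)$ of order $\upsilon(G)=\beta(G)$. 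A Tauberian argument of the kind recalled in Section \ref{sec:Height-zeta-functions}, under Condition \ref{cond:merom-continuation}, then yields the asymptotic $n(G,B)\sim CB^{1/\ind(G)}(\log B)^{\beta(G)-1}$.

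The hard parts are twofold, and together they explain why this remains an open conjecture in general. First, the key heuristic step replacing $Z_{V^{|L|}(K)}(s)$ by $Z_{V^{|L|}(K)}^{\modified}(s)$ is not rigorous: there is no canonical isomorphism $V\cong V^{|L|}$, and even after fixing one via $\psi_L$ in (\ref{eq:chosen isom}), the $\fp$-adic metrics for $\fp\in S$ get twisted in an essentially uncontrolled way (Remark \ref{rem:tw-untw}). Making this rigorous uniformly in $L$ seems to require genuinely new input, particularly because the sum over $L$ probes arbitrarily wild ramification. Second, one needs meromorphic continuation of $Z_{V(K)}(s)$, $Z^{\disc}(s)$, and $Z_{X_{\prim}(K)}(s)$ beyond their abscissae of convergence to justify the pole comparison, and one needs Conjecture \ref{conj:BT} itself for the singular Fano $\overline{X}$ (which, by Corollary \ref{cor:toric-strict-ineq}, is not even expected to follow naively from the Batyrev--Manin framework when $\age(G)<1$). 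Consequently the argument sketched here should be read as a conditional heuristic reduction of Malle to Manin on $\overline{X}$, consistent with the paper's philosophy, rather than as an unconditional proof; Conjecture \ref{conj:Malle} is at present known only in restricted families (abelian $G$ via class field theory, small-degree symmetric groups via Davenport--Heilbronn and Bhargava, etc.), and the present approach is intended to explain the shape of the asymptotic rather than to establish it.
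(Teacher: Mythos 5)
This statement is a conjecture (Malle's conjecture), not a theorem: the paper offers no proof of it, only cites \cite{MR1884706,MR2068887}, and in Section \ref{sec:Manin-vs.-Malle} gives a conditional heuristic linking it to Conjecture \ref{conj:BT}. You correctly recognize this and frame your argument as a heuristic reduction rather than a proof, which is consistent with the paper's own stance; your reductions $D_L^V=d_{L^{G_1}}$, $\ind(g)=\age(g)$, $\beta(G)=\upsilon(G)$ are exactly Propositions \ref{prop:compare-discs}, \ref{prop:ind=00003Dage} and \ref{prop:div min}.

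There is, however, a concrete error in the pole-matching step, and it is precisely the asymmetry the paper emphasizes in Section \ref{sec:Manin-vs.-Malle}. For the doubled permutation representation one has $\age(G)=\ind(G)\ge 1$, so $\overline{X}$ is canonical and Conjecture \ref{conj:BT} predicts the right-most pole of $Z_{X_{\prim}(K)}(s)$ at $s=1$ of order $\rho(\overline{X})+\gamma(\overline{X})=1+\gamma(\overline{X})$ --- not at $s=1/\age(G)$ of order $\delta(\overline{X})$ as you write. Reading Table \ref{table} backwards then only works when $\ind(G)=1$: in that case $\gamma(\overline{X})=\upsilon(G)$ and the simple-pole-plus-order bookkeeping does isolate a pole of $Z^{\disc}(s)$ at $s=1$ of order $\upsilon(G)$. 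But when $\ind(G)>1$ the predicted pole of $\cZ(s)$ at $s=1$ is simple, and the table only tells you that the abscissa of $Z^{\disc}(s)$ is $<1$ (i.e.\ that there are $o(B)$ fields of bounded discriminant); it cannot recover the location $1/\ind(G)$ or the order $\beta(G)$ of the pole of $Z^{\disc}(s)$, because $Z_{V(K)}(s)$ dominates. The paper states this explicitly: in the case $\age(G)>1$ one obtains ``only a weaker estimation than Conjecture \ref{conj:general Malle}.'' So even granting every heuristic assumption and Conjecture \ref{conj:BT}, your argument does not yield the claimed asymptotic except when $G$ contains a transposition. Beyond that, the remaining obstacles you list (the non-rigorous replacement of $Z_{V^{|L|}(K)}$ by its modified version, and the unproven meromorphic continuations) are the right ones.
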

In fact, the condition ``if $K$ is sufficiently large'' was not
assumed in Malle's original formulation. That version, which is stronger
than ours, holds for abelian groups \cite{MR791087,MR969545}. For
non-abelian groups, we only refer the reader to surveys \cite{Belabas:2005tc,MR1957027,MR2330428}.
However there exist counter-examples for the original version  \cite{MR2135320}. 
\begin{rem}
Klüners' counter-examples \cite{MR2135320} are based on the following
phenomenon: in some case, there exist intermediate extensions $K(\zeta)/K$
such that $\zeta$ is a root of unity and the value of $\beta$ differs
for $K$ and $K(\zeta)$. If we suppose that $K$ is sufficiently
large as in our version of the conjecture, then this problem does
not occur. Another way to avoid the problem is to exclude extensions
containing such an intermediate extension $K(\zeta)$ from the counting.
Klüners already discussed it, but called it ``not very natural''
(the last section, \emph{loc.cit.}). However, according to the correspondences
(\ref{eq:pt-corr}) and (\ref{eq:untw-prim}), such an exclusion corresponds
to removing an additional accumulating thin subset from $X_{\prim}(K)$
(see the following lemma) and getting a smaller cothin subset. It
might be thus natural from the viewpoint of the ``cothin version''
of Manin's conjecture. \end{rem}
\begin{lem}
Let $V$ be a quasi-projective geometrically irreducible $F$-variety
with a faithful $G$-action and consider bijection (\ref{eq:pt-corr}).
Let $E/F$ be a non-trivial finite field extension and $G\Fie(F)_{E}$
the subset of $G\Fie(F)$ consisting of those $G$-fields $L$ such
that there exists a $K$-embedding $E\hookrightarrow L$. The image
of 
\[
\bigsqcup_{L\in G\Fie(F)_{E}}V(L)^{G}/\Aut(L)
\]
in $X(F)$ is thin.\end{lem}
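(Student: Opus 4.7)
My plan is to exhibit, for each subgroup $H \subseteq G$ of index $[E:F]$, a generically finite $F$-morphism $\phi_H\colon Y_H \to X$ without rational section such that the image of $Y_H(F) \to X(F)$ contains every primitive $x \in X_{\prim}(F)$ whose associated $G$-field $L$ satisfies $L^H \cong E$. Since $E$ embeds into $L$ precisely when $L^H \cong E$ for some subgroup $H$ of index $[E:F]$, the finite union of the images of $Y_H(F)$ (taken over conjugacy classes of such $H$) will contain the image of $\bigsqcup_{L \in G\Fie(F)_E} V(L)^G / \Aut(L)$ in $X(F)$ and thus establish thinness.

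For the construction I would use Weil restriction and set
\[
Y_H := R_{E/F}\bigl((V/H)_E\bigr) \times_{R_{E/F}(X_E)} X,
\]
where $X \hookrightarrow R_{E/F}(X_E)$ is the canonical closed immersion arising from base change. By the functor-of-points description of Weil restriction, an $F$-point of $Y_H$ is a pair $(x, y)$ with $x \in X(F)$ and $y \in (V/H)(E)$ lifting $x_E \in X(E)$. For primitive $x$ with $G$-field $L$, the fiber $\Spec F \times_X (V/H)$ equals $\Spec L^H$, so these $y$ correspond bijectively to $F$-algebra homomorphisms $L^H \to E$; since both $L^H$ and $E$ are fields of degree $[G:H] = [E:F]$ over $F$, such homomorphisms are automatically isomorphisms $L^H \cong E$, as required. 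Generic finiteness of $\phi_H$ follows from generic finiteness of $V/H \to X$, preserved by Weil restriction and by fiber product along the closed immersion $X \hookrightarrow R_{E/F}(X_E)$.

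The main obstacle, and most delicate step, will be showing that $\phi_H$ admits no rational section. The natural approach is to base change to $\overline{F}$: using the identifications $R_{E/F}((V/H)_E) \otimes_F \overline{F} \cong \prod_{\sigma\colon E \hookrightarrow \overline{F}} (V/H)_{\overline{F}}$ and the diagonal $X_{\overline{F}} \hookrightarrow \prod_\sigma X_{\overline{F}}$, one sees that
\[
Y_H \otimes_F \overline{F} \;\cong\; \underbrace{(V/H)_{\overline{F}} \times_{X_{\overline{F}}} \cdots \times_{X_{\overline{F}}} (V/H)_{\overline{F}}}_{[E:F]\text{ factors}}.
\]
A rational $\overline{F}$-section of the projection to $X_{\overline{F}}$, composed with projection onto any single factor, would yield a rational $\overline{F}$-section of $(V/H)_{\overline{F}} \to X_{\overline{F}}$. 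But geometric irreducibility of $V$ ensures that $F(V_{\overline{F}})$ is a field and that $F(V_{\overline{F}})/F(X_{\overline{F}})$ is $G$-Galois; hence for $H \subsetneq G$ the intermediate field $F(V_{\overline{F}})^H$ has degree $[G:H] = [E:F] \ge 2$ over $F(X_{\overline{F}})$ and admits no $F(X_{\overline{F}})$-algebra retraction. This rules out a rational $\overline{F}$-section, a fortiori a rational $F$-section, and is precisely where the geometric irreducibility hypothesis enters.
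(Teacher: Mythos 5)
Your proof is correct, but it takes a genuinely different route from the paper's. The paper first replaces $E$ by its Galois closure with group $H$, observes that an embedding $E\hookrightarrow L$ yields a surjection $\psi:G\twoheadrightarrow H$ through which $V(L)^G\to X(F)$ factors via $(V/\Ker\psi)(E)^H$, and then reduces to a single claim (the image of $W(E)^{H}\to X(F)$ is thin for a fixed $H$-field $E$), which it proves by invoking the untwisting variety $W^{|E|}\to X$: this is irreducible because it becomes isomorphic to $W$ after base change to $E$, and it is generically finite of degree $\sharp H>1$, hence admits no rational section. You instead keep $E$ as it is, index over subgroups $H\subseteq G$ of index $[E:F]$, and package all $L$ with $L^{H}\cong E$ at once into the Weil restriction $Y_{H}=R_{E/F}((V/H)_{E})\times_{R_{E/F}(X_{E})}X$; your identification of $Y_H(F)$ over a primitive point with $\mathrm{Hom}_F(L^H,E)$ is right, and your descent of a hypothetical rational section to a single factor of $\prod_{\sigma}(V/H)_{\overline F}$ over $X_{\overline F}$ correctly derives the contradiction from $[G:H]\ge 2$ and the $G$-Galois function field extension. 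The two constructions are cousins (the untwisting variety is itself a twisted form extracted from a Weil restriction), but yours avoids both the passage to the Galois closure and the untwisting machinery, at the cost of producing a larger and typically reducible $Y_H$: since the paper's notion of thin set is phrased for integral $Y\to X$, you should note that one decomposes $Y_H$ into its finitely many irreducible components, the non-dominant ones contributing type I thin sets and the dominant ones still admitting no rational section because your argument lands the contradiction in the irreducible variety $(V/H)_{\overline F}$ rather than in $Y_H$ itself. Both proofs also implicitly use that finite unions of thin sets are thin.
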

\begin{proof}
Replacing it with the Galois closure, we may suppose that $E/F$ is
Galois. Let $H$ denote its Galois group. For $L\in G\Fie(F)_{E}$,
an embedding $E\hookrightarrow L$ gives a surjective homomorphism
$\psi:G\twoheadrightarrow H$ and we have a natural map
\[
V(L)^{G}\to(V/\Ker(\psi))(E)^{H},
\]
which factors the natural map $V(L)^{G}\to X(K)$. Therefore, the
image of the problem is contained in the image of 
\[
\bigsqcup_{\psi:G\twoheadrightarrow H}(V/\Ker(\psi))(E)^{H}.
\]
It is now enough to show the following claim:
\begin{claim*}
Let $V$ be as in the lemma and $L\in G\Fie(F)$. Suppose $G\ne1$.
Then the image of $V(L)^{G}\to X(F)$ is thin.
\end{claim*}
To show this, we may suppose that $V$ is an affine variety, and can
use the untwisting technique for an arbitrary affine variety in \cite{Yasuda:2014fk2}.
We can construct the untwisting variety $V^{|L|}$ of $V$ as follows.
Firstly there exists a $G$-equivariant closed embedding of $V$ into
an affine space $W=\AA_{F}^{d}$ with a linear faithful $G$-action.
Then $V^{|L|}$ is defined as the preimage of $X=V/G\subset W/G$
in $W^{|L|}$. Note that since we are working in characteristic zero,
we do not need to take the normalization as did in \cite{Yasuda:2014fk2}.
There exists a natural generically finite morphism $V^{|L|}\to X$
of degree $\sharp G$ such that the image of $V_{\ur}(L)^{G}$ in
$X(F)$ is contained in the image of $V^{|L|}(F)$. The construction
shows $V^{|L|}\otimes_{F}L\cong_{L}V\otimes_{F}L.$ It follows that
$V^{|L|}$ is irreducible. The claim, and hence the lemma follow. 
\end{proof}

\subsection{$V$-discriminants vs.\ ordinary discriminants}

Local counterparts of $V$-discriminants naturally appear as counting
weights of $G$-rings over a local field, especially in the context
of the McKay correspondence \cite{Wood-Yasuda-I,MR3230848,wild-p-adic,Yasuda:2013fk,Yasuda:2014fk2}.
It is natural to use (extended) $V$-discriminats when counting $G$-rings/fields
over a number field as well, and relate them to relevant quotient
varieties. Recently Dummit \cite{Dummit:2014vb} studied the distribution
of $G$-fields with respect to his $\rho$-discriminant, which is
seemingly close (essentially the same) to our $V$-discriminant. 

The relation between $V$-discriminants and ordinary ones is as follows.
Let $G$ be a transitive subgroup of $S_{n}$ and let $V=\AA_{K}^{2n}$
be the direct sum of two copies of the associated $n$-dimensional
permutation $G$-representation. Putting $S=M_{K,\infty}$, we define
$\fp$-adic metrics on $-K_{\overline{V}}$ given by the model $-K_{\overline{\cV}/\cO_{K}}$.
Accordingly we associate the $V$-discriminant $D_{L}^{V}$ to each
$L\in G\Fie(K)$. 
\begin{prop}
\label{prop:compare-discs}We have $D_{L}^{V}=d_{L^{G_{1}}}$.\end{prop}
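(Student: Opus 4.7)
The plan is to reduce to a local identity at each finite place $\fp$, namely $a_\fp = v_\fp(\Disc_{M/K})$ where $M := L^{G_1}$ and $a_\fp$ is the local exponent in $D_L^V = \prod_\fp N_\fp^{a_\fp}$. Since $d_M = \prod_\fp N_\fp^{v_\fp(\Disc_{M/K})}$, this gives the proposition. Let $W$ denote the natural permutation representation, so $V = W\oplus W$. Using the $G$-basis $e_1,\dots,e_n$ of $W$, I first compute $\Xi_{L,W} = \Hom_{\cO}^{G}(\cF_W, \cO_L)$: a tuple $(b_i)\in\cO_L^n$ lies in $\Xi_{L,W}$ iff $g(b_i)=b_{g(i)}$, which forces $b_1\in\cO_L^{G_1}=\cO_M$ and $b_i=g_i(b_1)$ for any $g_i\in G$ with $g_i(1)=i$. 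Hence $\Xi_{L,W}\cong\cO_M$, and the image of $\cO_L\cdot\Xi_{L,W}$ in $\cO_L^n$ is the image of the multiplication map $\cO_L\otimes_{\cO_K}\cO_M\to\cO_L^n$, $a\otimes c\mapsto(a\cdot g_i(c))_i$. Over $K$ this is the Galois-theoretic isomorphism $L\otimes_K M\cong\prod_{G/G_1}L\cong L^n$, so the integral map is injective and identifies $\cO_L\cdot\Xi_{L,W}$ with the order $\cO_L\otimes_{\cO_K}\cO_M$ inside its integral closure $\cO_L^n$.

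Using $V=W\oplus W$, so $\Xi_L=\Xi_{L,W}^{\oplus 2}$, the length appearing in the local formula for $a_\fp$ doubles. Since the maximal order $\cO_{L_\fp}^n$ has trivial discriminant over $\cO_{L_\fp}$, the standard discriminant--index identity together with flat base change yields
\[
\bigl[\cO_{L_\fp}^n:\cO_{L_\fp}\otimes\cO_{M_\fp}\bigr]^2 \;=\; \disc(\cO_{L_\fp}\otimes\cO_{M_\fp}/\cO_{L_\fp}) \;=\; \disc(\cO_{M_\fp}/\cO_\fp)\cdot\cO_{L_\fp}.
\]
Taking $N_{L_\fp/K_\fp}$ to $\cO_\fp$, using $N_{L_\fp/K_\fp}(\mathfrak{b}\cO_{L_\fp})=\mathfrak{b}^{\sharp G}$ for $\mathfrak{b}\subset\cO_\fp$, and converting the norm of the index ideal into an $\cO_\fp$-length via $\length_{\cO_\fp}(\cO_{L_\fp}/(a))=v_\fp(N_{L_\fp/K_\fp}(a))$ applied to the elementary divisors of $\cO_L\otimes\cO_M\hookrightarrow\cO_L^n$ (Smith normal form over the semi-local principal ideal ring $\cO_{L_\fp}=\prod_{\mathfrak{P}\mid\fp}\cO_{L_\mathfrak{P}}$), I obtain
\[
\length_{\cO_\fp}\!\Bigl(\frac{\cO_{L_\fp}^n}{\cO_{L_\fp}\otimes\cO_{M_\fp}}\Bigr) \;=\; \frac{\sharp G}{2}\,v_\fp(\Disc_{M/K}).
\]
Combined with the doubling from $V=W\oplus W$ and the local formula $a_\fp=\frac{1}{\sharp G}\cdot\length(\cdots)$ recorded in Section \ref{sub:distribution-VL}, this produces $a_\fp=v_\fp(\Disc_{M/K})$, and taking the product over all finite $\fp$ finishes.

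The main obstacle is the bookkeeping in the final step: one must convert an equality of $\cO_{L_\fp}$-ideals (which involves a square) into an equality of $\cO_\fp$-lengths via Smith normal form over the semi-local base $\cO_{L_\fp}$ and the norm-to-length identity. The felicitous cancellation between the factor of two from $V=W\oplus W$ and the square in the discriminant--index formula is precisely what makes the proposition come out with no stray constants, and it explains why the natural statement involves the direct sum of \emph{two} copies of the permutation representation rather than a single copy.
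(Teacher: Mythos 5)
Your argument is correct. The paper itself disposes of this proposition in two sentences, by citing the local-field case proved in \cite{Wood-Yasuda-I} and observing that the global statement is the product over finite places of the local ones; you instead reprove that local identity from scratch. Your route --- computing $\Xi_{L,W}\cong\cO_{M}$ explicitly for the permutation representation, identifying $\cO_{L_{\fp}}\cdot\Xi_{L,W}$ with the order $\cO_{L_{\fp}}\otimes_{\cO_{\fp}}\cO_{M_{\fp}}$ inside $\cO_{L_{\fp}}^{n}$, and then trading the module index for the discriminant via $[\cO_{L_{\fp}}^{n}:\cO_{L_{\fp}}\otimes\cO_{M_{\fp}}]^{2}=\disc(\cO_{M_{\fp}}/\cO_{\fp})\cdot\cO_{L_{\fp}}$ --- is essentially the content of the cited local result, so what you gain is self-containedness rather than a new idea; what you lose is brevity. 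The bookkeeping is right: the factor $2$ coming from $V=W\oplus W$ exactly cancels the square in the discriminant--index formula, and the norm-to-length conversion over the semi-local principal ideal ring $\cO_{L_{\fp}}$ is standard. Two small points you should make explicit in a written version: (i) the injectivity of $\cO_{L}\otimes_{\cO_{K}}\cO_{M}\to\cO_{L}^{n}$ follows because $\cO_{M}$ is locally free over $\cO_{K}$, so the source is a torsion-free $\cO_{L}$-module and the map is already an isomorphism after tensoring with $K$; (ii) the length in the formula $a_{\fp}=\frac{1}{\sharp G}\length(\cdots)$ quoted from Section \ref{sub:distribution-VL} is taken over $\cO_{\fp}$, which is what your norm computation produces --- a quick sanity check with $G=S_{2}$ and a tamely ramified quadratic $L_{\fp}$ confirms the normalization.
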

\begin{proof}
The corresponding assertion over a local field was proved in \cite{Wood-Yasuda-I}.
The proposition is a direct consequence of it.
\end{proof}
For $g\in G$, let us define its index by regarding $G$ as a subgroup
of $S_{n}$ and its age by regarding $G$ as a subgroup of $\GL(V)$
with $V$ as above. 
\begin{prop}
\label{prop:ind=00003Dage}We have $\ind(g)=\age(g)$.\end{prop}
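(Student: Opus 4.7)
The plan is to compute both quantities explicitly in terms of the cycle structure of $g$ acting on $[n]$ and verify they match.

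First I would fix notation: let $N := \sharp G$, let $\zeta \in \overline{K}$ be a primitive $N$th root of unity used to define ages, and write the cycle decomposition of $g \in G \subset S_n$ as cycles of lengths $l_1,\dots,l_k$ with $\sum_j l_j = n$, so that the number of $g$-orbits on $[n]$ equals $k$ and $\ind(g) = n - k$.

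Next I would diagonalise $g$ on a single copy of the permutation representation $K^n$. On a cyclic block of length $l_j$ the permutation matrix has eigenvalues $1, \omega_j, \omega_j^2, \dots, \omega_j^{l_j-1}$ with $\omega_j = e^{2\pi i/l_j}$. Since $l_j$ divides the order of $g$ and hence divides $N$, we may write $\omega_j^m = \zeta^{mN/l_j}$, and each exponent $mN/l_j$ lies in $[0,N)$. Summing these exponents over one cycle gives
\[
\sum_{m=0}^{l_j - 1} \frac{mN}{l_j} \;=\; \frac{N}{l_j}\cdot\frac{l_j(l_j-1)}{2} \;=\; \frac{N(l_j - 1)}{2},
\]
and summing over all cycles contributes $\frac{N}{2}\sum_j(l_j - 1) = \frac{N(n - k)}{2}$.

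Since $V$ is the \emph{direct sum of two copies} of the permutation representation, every eigenvalue above occurs with multiplicity two, so the total of the exponents $a_i$ for $g$ on $V$ is $N(n - k)$. Dividing by $N$ as in the definition of age gives
\[
\age(g) \;=\; \frac{1}{N} \cdot N(n - k) \;=\; n - k \;=\; \ind(g),
\]
as desired. The only mild subtlety is bookkeeping of the exponent ranges; the doubling of the permutation representation is exactly what cancels the factor of $\tfrac12$ that would otherwise appear, and this is really the only ``content'' of the proposition.
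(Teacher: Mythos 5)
Your proof is correct and follows essentially the same route as the paper's: diagonalise $g$ cycle by cycle in the permutation representation, sum the exponents, and observe that doubling the representation cancels the factor of $\tfrac12$ (the paper phrases the reduction to a single cycle via ``additivity of age and index,'' whereas you just carry out the full sum over all cycles). The only cosmetic point is that $\omega_j$ should be introduced as a primitive $l_j$-th root of unity in $\overline{K}$ (e.g.\ $\zeta^{N/l_j}$, as you do a line later) rather than as $e^{2\pi i/l_j}$, since the ambient field is $\overline{K}$ and not $\CC$.
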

\begin{proof}
From the additivity of age and index, we may suppose that $g$ is
the cyclic permutation, $1\mapsto2\mapsto\cdots\mapsto n\mapsto1$.
Then a diagonalization of $g$ as an element of $\GL_{2n}(\overline{K})$
is
\[
\diag(1,\zeta,\dots,\zeta^{n-1})^{\oplus2}
\]
with $\zeta$ a primitive $n$-th root of unity. We have
\[
\age(g)=\frac{2}{n}\sum_{i=0}^{n-1}i=n-1=\ind(g).
\]

\end{proof}

\subsection{General actions}

Next we consider an arbitrary faithful linear $G$-action on $V=\AA_{K}^{d}$
such that the associated map $\overline{V}\to\overline{X}$ is étale
in codimension one. We fix an adelic metric on the anti-canonical
divisor of $\overline{X}=\overline{V}/G$ and the one induced on the
anti-canonical divisor of $\overline{V}$. We also fix $S$ such that
the action extends to $\AA_{\cO_{S}}^{d}$. In this setting, we define
(extended) $V$-discriminants for $L\in G\Fie(K)$. For $B>0$, we
put
\begin{gather*}
\tilde{N}(G,V,B):=\sharp\{L\in G\Fie(K)\mid\tilde{D}_{L}\le B\}\\
N(G,V,B):=\sharp\{L\in G\Fie(K)\mid D_{L}\le B\},
\end{gather*}
which are finite thanks to Proposition \ref{prop:fin many torsors}.
We would like to propose the following generalization of Malle's conjecture:
\begin{conjecture}
\label{conj:general Malle}If $K$ is sufficiently large, we have
\[
N(G,V,B)\sim CB^{1/\age(G)}(\log B)^{\upsilon(G)-1},
\]
with $\upsilon(G)$ the number of the youngest $K$-conjugacy classes.
\end{conjecture}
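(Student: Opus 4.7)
The conjecture is stated as a proposal rather than a theorem, so any ``proof'' can at best be the heuristic derivation sketched in Section \ref{sec:Modified-heights} turned into a schematic argument. Accordingly, the plan is to pass through the three zeta functions $Z_{X_{\prim}(K)}(s)$, $Z_{V(K)}(s)$ and $Z^{\disc}(s)$ and use the approximate identity \eqref{eq:Main fn eq} together with Conjecture \ref{conj:BT} applied to $\overline{X}=\overline{V}/G$.

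First I would unpack the geometry. Under the running hypothesis that $\overline{V}\to\overline{X}$ is \'etale in codimension one, Section \ref{sec:Quotients-of-projective} shows that $\overline{X}$ is a log terminal Fano variety of Picard number one with $\mld(\overline{X})=\age(G)$ (Proposition \ref{prop:disc age-1}) and $\delta(\overline{X})=\upsilon(G)$ (Proposition \ref{prop:div min}). When $\age(G)=1$ the variety is canonical and when $\age(G)<1$ it is log terminal but not canonical; in both situations I would invoke Conjecture \ref{conj:BT} applied to the cothin subset $X_{\prim}(K)\subset\overline{X}(K)$, hoping/assuming that it is ``sufficiently small'' in the required sense. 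Assuming the expected exponents in the non-canonical case match the invariants $1/\mld(\overline{X})$ and $\delta(\overline{X})$ (which is exactly the content of the heuristic and consistent with Lemma \ref{lem:toric-alpha}), this gives an asymptotic for $\cN_{X_{\prim}(K)}(B)$ whose exponent pair is $(1/\age(G),\upsilon(G)-1)$ in the non-canonical case and $(1,\upsilon(G))$ in the canonical case.

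Next I would translate to zeta functions. Granting Condition \ref{cond:merom-continuation} for $Z_{X_{\prim}(K)}(s)$, the Tauberian theorem of Section \ref{sec:Height-zeta-functions} tells me the right-most pole is at $s=1/\age(G)$ of order $\upsilon(G)$ in the non-canonical case, and at $s=1$ of order $\upsilon(G)+1$ in the canonical case. On the other hand Peyre's Theorem \ref{thm:Peyre} gives $Z_{V(K)}(s)$ a simple pole at $s=1$ with positive residue. Now I invoke the heuristic identity
\[
Z_{X_{\prim}(K)}(s)\;\approx\;\cZ(s)\;=\;Z^{\disc}(s)\,Z_{V(K)}(s),
\]
derived via untwisting in \eqref{eq:Main fn eq}, and match poles using Table \ref{table}: in the non-canonical case the comparison forces $Z^{\disc}(s)$ to have its right-most pole at $s=\alpha=1/\age(G)>1$ of order $\beta=\upsilon(G)$; in the canonical case it forces a pole at $s=1$ of order $\upsilon(G)$. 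A second application of the Tauberian theorem to $Z^{\disc}(s)$, together with the equivalence $C_1 D_L\le\tilde D_L\le C_2 D_L$ of \eqref{eq:V-disc-diff} to pass between $\tilde N(G,V,B)$ and $N(G,V,B)$, then yields the desired asymptotic $N(G,V,B)\sim CB^{1/\age(G)}(\log B)^{\upsilon(G)-1}$.

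The principal obstacle, and the reason this is a conjecture rather than a theorem, is the heuristic approximation $Z_{X_{\prim}(K)}(s)\approx\cZ(s)$. Two distinct difficulties sit inside it: the lack of a canonical isomorphism $V\cong V^{|L|}$ and the ``twist'' of the $\fp$-adic metrics for $\fp\in S$ highlighted in Remark \ref{rem:tw-untw}, which make the comparison of heights on $V(K)$ and $V^{|L|}(K)$ through the chosen $\psi_L$ of \eqref{eq:chosen isom} only a first-order approximation; and the global analytic problem of controlling the infinite sum $\sum_{L}Z_{V^{|L|}(K)}^{\modified}(s)$, which demands uniformity in $L$ far beyond what is known. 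A secondary obstacle is the meromorphic continuation demanded by Condition \ref{cond:merom-continuation} for both $Z_{X_{\prim}(K)}(s)$ and $Z^{\disc}(s)$. Finally, the toric counterexample behaviour in Corollary \ref{cor:toric-strict-ineq} shows that in the non-canonical regime the Batyrev--Manin conjecture actually gives a strictly smaller exponent of $B$ for $\overline X$, so on the point-counting side the hypothesis that $X_{\prim}(K)$ is ``sufficiently small cothin'' in Conjecture \ref{conj:BT} is doing genuine work; pinning down what this means in the quotient setting is itself a serious part of the program.
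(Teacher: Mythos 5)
The statement you are asked about is a conjecture; the paper offers no proof, only supporting evidence (Propositions \ref{prop:compare-discs} and \ref{prop:ind=00003Dage} showing it specializes to Malle's conjecture, a known case via Wood's theorem, and the heuristic of Section \ref{sec:Manin-vs.-Malle}). Your reconstruction of the heuristic machinery — untwisting, the identity \eqref{eq:ht-zetas}, the approximation $Z_{X_{\prim}(K)}(s)\approx Z^{\disc}(s)Z_{V(K)}(s)$, Table \ref{table}, and the Tauberian step — matches the paper. But the direction in which you run the argument contains a genuine gap that the paper itself is explicit about: the implication ``Conjecture \ref{conj:BT} $\Rightarrow$ Conjecture \ref{conj:general Malle}'' only goes through when $\age(G)=1$.

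Concretely: when $\age(G)>1$ the variety $\overline{X}$ is terminal, so $\gamma(\overline{X})=0$ and Conjecture \ref{conj:BT} predicts a simple pole of $Z_{X_{\prim}(K)}(s)$ at $s=1$. By the first row of Table \ref{table} this only tells you that $Z^{\disc}(s)$ converges at $s=1$, i.e.\ that its abscissa lies somewhere in $\Re(s)<1$; the location $1/\age(G)$ and the order $\upsilon(G)$ of its right-most pole are invisible to the pole of $\cZ(s)$ at $s=1$. Your claim that the canonical case ``forces a pole at $s=1$ of order $\upsilon(G)$'' is false here — there is no pole of $Z^{\disc}$ at $s=1$ at all. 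When $\age(G)<1$, Conjecture \ref{conj:BT} asserts only $\cN_U(B)\sim CB^{\alpha}(\log B)^{\beta}$ with unspecified $\alpha>1$, $\beta\ge0$, so there is nothing to match against; and the one concrete prediction available, namely Batyrev--Manin applied via Corollary \ref{cor:toric-strict-ineq} (whose hypotheses $\overline{X}$ satisfies when $G$ is abelian), gives an exponent \emph{strictly smaller} than $1/\age(G)$, directly contradicting the exponents you assume. The paper flags exactly this tension (the remark after Lemma \ref{lem:toric-alpha} and the final subsection) and consequently runs the heuristic only in the direction Malle $\Rightarrow$ Manin outside the case $\age(G)=1$. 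You acknowledge the toric obstruction at the end, but your derivation still relies on the assumption it undermines; as written, the argument establishes the conjectured asymptotic (even heuristically) only in the single case $\age(G)=1$.
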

Thanks to Propositions \ref{prop:compare-discs} and \ref{prop:ind=00003Dage},
Conjecture \ref{conj:general Malle} is indeed a generalization of
Conjecture \ref{conj:Malle}. From inequalities (\ref{eq:V-disc-diff}),
we expect that the same asymptotic formula holds for $\tilde{N}(G,V,B)$,
the constant factor $C$ possibly changing. 
\begin{example}
Let $G\subset S_{n}$ be a transitive subgroup and consider the natural
$G$-action on $V=\AA_{K}^{d}$. For an integer $n>0$, let $V^{\oplus n}$
denote the direct sum $G$-representation of $n$ copies of $V$.
We have
\[
D_{L}^{V^{\oplus n}}=(D_{L}^{V})^{n}.
\]
Therefore Conjecture \ref{conj:Malle} for $G$ implies Conjecture
\ref{conj:general Malle} for $V^{\oplus n}$. 
\end{example}

\begin{example}
Suppose that $G=\left\langle g\right\rangle $ is a cyclic group of
order $p$ with $p$ an odd prime number, that $K$ contains a primitive
$p$-th root $\zeta$ of unity and that $V=\AA_{K}^{2n}$ has the
$G$-action determined by
\[
g=\diag(\zeta^{a_{1}},\zeta^{-a_{1}},\zeta^{a_{2}},\zeta^{-a_{2}},\dots,\zeta^{a_{n}},\zeta^{-a_{n}})\in\GL(V).
\]
The $G$-representation $V$ is balanced in the sense of \cite{Wood-Yasuda-I}.
Moreover the function 
\[
G\to\QQ,\,g\mapsto\age(g)
\]
is fair in the sense of \cite{Wood:2010gs}. From \cite[Th. 3.1]{Wood:2010gs},
Conjecture \ref{conj:general Malle} is true in this case.\end{example}
\begin{rem}
\label{rem:fields-than-rings}The reason why we consider the distribution
of $G$-fields rather than $G$-rings can be explained as follows.
Let us consider the case $G=S_{n}$. There exists a one-to-one correspondence
between $S_{n}$-rings over $K$ and degree $n$ $K$-algebras. In
general, $K$-algebras decomposable as the product of several fields
outnumber indecomposable ones. In the context of Manin's conjecture,
this corresponds to the expected phenomenon that non-primitive rational
points of a quotient variety $X=V/G$ outnumber primitive ones. Thus
non-primitive rational points constitute an \emph{accumulating thin
subset, }which should be removed to have a nice formula.
\end{rem}

\section{Manin vs.\  Malle\label{sec:Manin-vs.-Malle}}

In this last section, we see how Conjectures \ref{conj:BT} and \ref{conj:general Malle}
are related as a consequence of materials prepared in earlier sections. 

We suppose that $V=\AA_{K}^{d}$ is given a faithful linear $G$-action
and that $\overline{V}\to\overline{X}$ is étale in codimension one.
Let $S\subset M_{K}$ be a finite subset containing $M_{K,\infty}$
such that the $G$-action extends to $\AA_{\cO_{S}}^{d}$. Accordingly
we fix an adelic metric on $-K_{\overline{X}}$ and the induced one
on $-K_{\overline{V}}$. We assume that the Dirichlet series $Z^{\disc}(s)$,
$Z_{V(K)}(s)$ and $Z_{X_{\prim}(K)}(s)$ admit meromorphic continuation
as in Condition \ref{cond:merom-continuation}. We also assume that
$\cZ(s)=Z^{\disc}(s)Z_{V(K)}(s)$ and $Z_{X_{\prim}(K)}(s)$ have
the right-most poles at the same place and of the same order, and
assume that $K$ is sufficiently large. 

We discuss three cases separately.

\subsection{The case $\protect\age(G)>1$}

In this case, Conjecture \ref{conj:general Malle} shows that the
Dirichlet series $Z^{\disc}(s)$ is convergent at $s=1$. This and
the assumptions show that $\cZ(s)$ and hence $Z_{X_{\prim}(K)}(s)$
have simple poles at $s=1$. Thus Conjecture \ref{conj:BT} holds
for $U=X_{\prim}(K)$. 

Conversely, if Conjecture \ref{conj:BT} holds for $U=X_{\prim}(K)$,
then $Z_{X_{\prim}(K)}(s)$ and $\cZ(s)$ have the right-most poles
at $s=1$, which are simple poles. Therefore $Z^{\disc}(s)$ is convergent
at $s=1$ and the right-most pole of $Z^{\disc}(s)$ is in $s>1$.
We thus have only a weaker estimation than Conjecture \ref{conj:general Malle}.

\subsection{The case $\protect\age(G)=1$}

This case is where Conjectures \ref{conj:BT} and \ref{conj:general Malle}
are tied to each other the best. Conjecture \ref{conj:general Malle}
says that $Z^{\disc}(s)$ has the right-most pole at $s=1$, which
has order $\upsilon(G)=\gamma(X)$ (Proposition \ref{prop:div min}).
Our assumptions show that $\cZ(s)$ and hence $Z_{X_{\prim}(K)}(s)$
have right-most poles at $s=1$, which have order $\gamma(X)+1$.
Thus Conjecture \ref{conj:general Malle} implies Conjecture \ref{conj:BT}
for $U=X_{\prim}(K)$ under the assumptions. We can similarly see
the converse.

\subsection{The case $\protect\age(G)<1$}

In this case, Conjecture \ref{conj:general Malle} says that $Z^{\disc}(s)$
has the abscissa of convergence at $s=1/\age(G)>1$. This shows Conjectures
\ref{conj:BT}. However, the expected abscissa of $Z_{X_{\prim}(B)}(s)$
(if $X_{\prim}(B)$ is sufficiently small) is not at $s=1/\age(G)$
at least when $G$ is abelian (see Corollary \ref{cor:toric-strict-ineq}). 

If Conjecture \ref{conj:BT} is true, then the abscissa of $Z^{\disc}(s)$
is greater than one. 

 \bibliographystyle{plain}
\bibliography{/Users/Takehiko/Dropbox/Math_Articles/mybib}

\end{document}